\newtheorem{Theorem}{Theorem}[section]
\newtheorem{Corollary}[Theorem]{Corollary}
\newtheorem{Proposition}[Theorem]{Proposition}
\newtheorem{Lemma}[Theorem]{Lemma}
\newtheorem{Claim}[Theorem]{Claim}
\theoremstyle{Definition}
\newtheorem{Definition}[Theorem]{Definition}
\newtheorem{Example}[Theorem]{Example}
\theoremstyle{Remark}
\newtheorem{Remark}[Theorem]{Remark}
\def\leaderfill{\leaders\hbox to .8em{\hss .\hss}\hfill}
\def\_#1{{\lower 0.7ex\hbox{}}_{#1}}
\def\C{{\mathcal{C}}}
\def\G{{\mathcal{G}}}
\def\Sa{{\mathcal{S}}}
\def\fa{{\mathcal{F}}}
\def\U{{\mathcal{U}}}
\def\po{{\partial}}
\def\vr{{\varphi}}
\def\la{{\lambda}}
\def\ov{\overline}
\def\ve{{\varepsilon}}
\def\re{{\mathbb{R}}}
\def\Cent{\operatorname{{Cent}}}
\def\Sad{\operatorname{{Sad}}}
\def\Hol{\operatorname{{Hol}}}
\def\Diff{\operatorname{{Diff}}}
\def\sing{\operatorname{{sing}}}
\def\Sing{\operatorname{{sing}}}
\def\codim{\operatorname{{codim}}}
\title{\LARGE {Codimension one foliations with Bott-Morse singularities II}}
\author{{Bruno Sc\'ardua and Jos\'e Seade} }
\date{}
\begin{document}

\maketitle

\begin{abstract}
 We study codimension one  foliations with singularities
  defined locally by Bott-Morse functions   on  closed oriented manifolds\footnote[0]{Subject Classification: Primary
57R30, 58E05; Secondary 57R70, 57R45.
\newline Keywords: Bott-Morse foliations, holonomy, basin, stability, surgery. \newline
Partially supported by CNPq, Brazil, CONACYT and DGAPA-UNAM,
Mexico.}.
 We carry to this setting the classical concepts of
holonomy of invariant sets and stability,  and prove  a stability
theorem in the spirit of the  local  stability theorem of Reeb.
This yields, among other things, a
 good topological understanding  of the leaves one may
have around a center-type component of the singular set, and also
of  the topology of its {\it basin}. The stability theorem further
allows the description of
 the topology of the boundary of the basin   and how the topology of the leaves changes when
passing from inside to outside the basin. This is described via
{\it fiberwise Milnor-Wallace surgery}. A key-point for this is to
show that if the boundary of the basin of a center is non-empty,
then it contains a saddle;
 in this case we say that the center and the saddle are {\it paired}.
 We then describe the possible pairings one may have in dimension three
 and use a construction  motivated by the classical saddle-node bifurcation,
 that we call {\it foliated surgery}, that allows  the
 reduction of certain pairings of singularities of a foliation. This is used together with our
 previous work on the topic to prove
 an extension for 3-manifolds of Reeb's sphere recognition theorem.
\end{abstract}

\tableofcontents

\section{Introduction}
\label{section:Introduction}

Foliations with singularities on smooth manifolds appear naturally
in many fields of mathematics. For instance, given a manifold $M$
one may: i)  consider a smooth function $f: M \to \mathbb R$ and
look at its level sets; the critical points of $f$ yield
singularities of the corresponding foliation; ii) consider a Lie
group action $G \times M \to M$; the $G$-orbits define a foliation
with singularities at the points in special orbits; iii) if $M$
admits a Poisson structure, then this structure determines a
foliation by symplectic leaves, which is singular at the points
where the rank drops.

These are just a few examples of singular foliations, there are
many more.  And yet, in the setting we envisage here -that of real
manifolds- our knowledge of singular foliations is not so big,
except in special cases.  No doubt, a reason for this is the high
degree of difficulty involved in the study of singular foliations:
if non-singular foliations theory is already hard enough, adding
singularities can turn it beyond any reasonable scope.

Thus one is naturally lead to imposing certain restrictions on the
type of singular foliations one studies. Here we look at a class
of codimension 1 foliations which is a natural generalization of
the Morse foliations. We look at foliations which are locally
defined by Bott-Morse functions. This class is large enough  to be
a rich family, including many interesting examples,
 and yet the conditions we impose do allow  certain control, and one may  hope to say interesting things.

The concept   of  foliations with Bott-Morse
singularities on smooth manifolds  obviously comes  from
 the landmark work of M. Morse in his Colloquium Publication \cite{Morse1}, as well as
from R. Bott's
generalization of Morse functions in \cite{Bott1}.
This notion for foliations  was introduced in our previous
article \cite {Se-Sc}, where we focused on the case where all
singularities were transversally of center-type. The presence of
saddles obviously makes  the theory richer, and this is the
situation we now envisage.

This type of foliations fits  within the framework of {\it
generalized foliations} introduced by H. J. Sussmann in
\cite{Sus}, and they appear naturally in various contexts. For
instance every singular Riemannian foliation in the sense of P.
Molino \cite {Mo} has Bott-Morse singularities of center-type;
this includes the foliations given by cohomogeneity 1 actions of compact
Lie groups. The foliations given by generic singular Poisson
structures of corank 1 also have Bott-Morse singularities, and so
do the liftings to fiber bundles of Morse foliations given on the
base.

The philosophy underlying this article is the following. Suppose
$\fa$ is a codimension 1, transversally oriented closed foliation
on a closed, oriented and connected $m$-manifold $M$, and the
singularities of $\fa$ are all of Bott-Morse type. One has that
the transverse type of the foliation at each component $N_j$ of
its singular set is independent of the choice of transversal, and
it is therefore determined by its Morse index. Centers correspond
to the extreme cases of Morse index $0$ or $m -\hbox{dim}\,N_j$.
We know from our previous article that if there are only
center-type components, then there are exactly two such
components, and the foliation is given by the fibers of a smooth
Bott-Morse function $f:M \to [0,1]$ with  two critical values at
the points $\{0,1\}$, which correspond to the two components of
the singular set. Furthermore, in this situation the leaves are
all spherical fiber bundles over each component of the singular
set. Hence the topology is ``well understood" in these cases.

One may thus think of the general case in the following way.
Assume there is a center-type component $N_0\subset \sing(\fa)$
and look at the leaves around it, which we can describe as above;
this uses a local stability theorem similar to Reeb's theorem for
compact leaves (Section \ref{Section:Basins}). Look at the {\it
basin} $\mathcal C(N_0,\fa)$ of $N_0$ (see the definition inside);
these are the leaves that ``we understand". Now we go to the
boundary $\partial \mathcal C(N_0,\fa)$ of this set. By
construction, if  $\partial \mathcal C(N_0,\fa)= \emptyset$ then
$M=\mathcal C(N_0,\fa)$,  there are no saddle  singularities of
$\fa$ in $M$ and we are in the situation previously envisaged in
\cite{Se-Sc}. Otherwise there must be a saddle component $N_1$ in
$\partial \mathcal C(N_0,\fa)$ and no center-components there; in
this case we say that $N_0$ and $N_1$ {\it are paired}, a
key-concept for this article, inspired by \cite{Camacho-Scardua}.

If the Morse index of $N_1$ is not $1$ nor $m - \hbox{dim}\,N_1
-1$, then   $N_1$ has exactly one separatrix $L_1$ and if we
assume that there are no saddle-connections, then the compact set
$\Lambda(N_1) = N_1 \cup L_1$ is precisely the boundary $\partial
\mathcal C(N_0,\fa)$. In sections \ref{Section:distinguishedneigh}
and \ref{sec. Partial Stability Theorem}  we prove a Stability
Theorem (\ref{Theorem:partialstabilitytheorem}) that allows us to
determine the topology of $\partial \mathcal C(N_0,\fa)$ and of
the leaves in a neighborhood of $\partial \mathcal C(N_0,\fa)$,
but outside the basin, by comparison with the leaves inside the
basin.
  This is possible thanks to the triviality
of the holonomy of codimension one invariant subsets that we prove
in Section \ref{Section:holonomy-singular-set}, and the
description of {\it distinguished  neighborhoods} of saddles in
Section \ref{Section:distinguishedneigh}. The description we give of   the topology of $\partial \mathcal
C(N_0,\fa)$ and the topology of the leaves beyond the boundary
  is done by means of    ``fiberwise Milnor-Wallace surgery" (Theorem
\ref{topology of boundary}). Of course this is  inspired by ideas
of R. Thom in \cite {Thom}.

When the Morse index of $N_1$ is $1$ or $m - \hbox{dim}\,N_1 -1$
 then the saddle $N_1$ may have one or two separatrices. If it has only one
 separatrix, then the discussion is exactly as above. However, if $N_1$ has
 two distinct separatrices $L_1, L_2$, then $\Lambda(N_1) = N_1 \cup L_1 \cup L_2$
 has two components that meet at $N_1$ and the topology of the leaves in
$C(N_0,\fa)$ only determines the topology of one of these
components. We need more information in order to  determine the
topology of all of $\Lambda(N_1)$. As we shall see, this is
actually equivalent to determining the topology of the leaves ``in
the other side" of the invariant set $\Lambda(N_1)$.

The hope would be that there is not ``much more" beyond the
boundaries of the basins of  center-type components. However
examples show that this is not the case and the possibilities are
infinite. Therefore we must impose additional restrictions on the
foliations we consider in order to be capable of saying something.
The first natural condition, that we already used above, is
 that the foliation has no saddle connections; we call
these {\it Bott-Morse foliations}, see definition
\ref{Definition:Bott-Morse}. We also restrict this discussion to
closed foliations, which already leaves out one the main features
of foliation-theory: the presence of recurrences. Yet, for
instance, if a 3-manifold admits a Heegard splitting of genus $g >
0$, then it admits closed Bott-Morse foliations with $2g$
non-isolated center components, $2g-2$ isolated saddles and leaves
of all genera $1,...,g$ (see Section \ref{Section:examples}).
Moreover, if $M^3$ has a Heegard splitting of genus $g$ then it
has Heegard splittings of all genera $\ge g$, and therefore
Bott-Morse foliations with leaves of all genera. In higher
dimensions the situation is even wilder.

We thus have that the ``zoo" of Bott-Morse foliations is rather
big. So in the last sections of this work we restrict ourselves to
dimension three, which is already rich enough.  We describe
several types of possible pairings one may have in 3-manifolds,
completing the study done in \cite{Camacho-Scardua} for foliations
with isolated Morse singularities, and we show that if one imposes
the condition of having ``sufficiently more" centers than saddles,
 then the above mentioned list   actually   describes all possible pairings.

We then extend to Bott-Morse foliations a reduction technique
introduced in \cite{Camacho-Scardua}, that we call {\it foliated
surgery}, that allows us to reduce, under appropriate conditions,
the number of components of the singular set of a foliation.  This
technique   is used in \cite{Camacho-Scardua, Camacho-Scardua2} to
prove that a closed $m$-manifold supporting a {\it Morse
foliation} with strictly more centers than saddles is homeomorphic
to the $m$-sphere $S^m$  or to a {\it Kuiper-Eells manifold}, {\it
i.e.}, a closed manifold supporting a Morse function with three
singular points.  This generalizes the classical ``sphere
recognition theorem" of Reeb and an analogous result by
Kuiper-Eells.

In this article we follow the same strategy, which is now
harder because the singular set of the foliation may have
dimension $> 0$ and components of different dimensions. The idea
is the following. Let $N\subset \sing(\fa)$ be a center type
component. If $\fa$ is not compact then  $\partial \mathcal
C(N,\fa)$, the boundary of its basin, contains some saddle
singularity $N_0\subset \sing(\fa)$. Denote by $\Lambda(N_0)$ the
union of $N_0$ and all separatrices of $N_0$. Since $m$ is $3$,
there are cases  where $\Lambda(N_0) \setminus N_0$ is not
connected. Then the ``external leaves" $L_e$, those ``beyond" the
boundary $\partial \mathcal C(N,\fa)$,  are
  such that  $L_e \setminus (L_e\cap W)$ has
 two connected components, where $W$ is a distinguished neighborhood of $N$. In this case
our Stability Theorem~\ref{Theorem:partialstabilitytheorem} only
gives information about  the connected component of $L_e\setminus
(L_e \cap W)$  which is close to the basin $\C(N, \fa)$. The
topology of the other connected component has to be controlled by
some other additional information. Thence we must demand having
``sufficiently more" centers than saddles:  $c(\fa) > 2 s(\fa)$.
With this hypothesis, the local description of $\fa$ in $W$ that
we get allows us to describe the topology of $L_e\cap W$. We can
then use Theorem \ref{topology of boundary}   to describe the
topology of the external leaves $L_e$.

This description allows the
classification of the pairings that may possibly appear in this
process, and we  use foliated surgery (cf.
\S~\ref{Section:foliatedsurgery}) in order to reduce the total
number of singularities of $\fa$ but preserving the inequality
$c(\fa)> 2s(\fa)$. Repetition of this process finally shows that
$M$ can be equipped with a {\it compact} Bott-Morse foliation, so
we use \cite{Se-Sc} to conclude  (Theorem
~\ref{Theorem:Center-Saddledimensionthree}) that if $M^3$ is a
closed oriented connected $3$-manifold that admits a closed
 Bott-Morse foliation $\fa$ satisfying $c(\fa) > 2 s(\fa)$, then
 $M$   is the 3-sphere, a product  $S^1 \times S^2$ or
  a Lens space $L(p,q)$.

When the singularities are all of the same dimension, then the
hypothesis $c(\fa) >  s(\fa)$ is enough to get all the information
we need in order  to carry on with the same strategy, and we
arrive to the same conclusion (Theorem
\ref{Theorem:Center-Saddledimensionthreecircles}).

Simple  examples (in Section~\ref{Section:examples}) show that the
above bounds   ($c(\fa) > 2 s(\fa)$ in general or just $c(\fa) >
s(\fa)$ if the singular set is ``pure dimensional") are
  sharp in the sense that there are many closed, oriented
  3-manifolds that admit closed Bott-Morse foliations with
   $c(\fa) = 2 s(\fa)$ singular components of mixed dimensions,
   and also with $c(\fa) = s(\fa)$ singular components of the same dimension (either $0$ or $1$, as we please).

\vglue.1in This work was done during  visits of the first named
author to the Instituto de Matem\'aticas of UNAM in Cuernavaca,
Mexico, and  visits of the second named author to IMPA, Rio de
Janeiro, Brazil. The authors want to thank these institutions for
their support and hospitality.


\section{Definitions and examples}

Throughout this article $M$ is a closed, oriented,  smooth manifold  (say of
class $C^\infty$ for simplicity), endowed with a riemannian metric. As usual,
being closed means that $M$ is compact and has empty boundary. Let
us  recall  the following definition (cf. \cite{Morse1, Bott1}).

\begin{Definition} {\rm
A smooth function $f: M \to \mathbb R$ is {\it Bott-Morse}  if its
critical points form a union of disjoint, closed, submanifolds $
\bigcup\limits_{j=1}^t N_j$ of $M$ which are non-degenerate for
$f$, {\it i.e.}, for each $p \in N_j \subset \sing(\fa)$ and for
each small  disc $\Sigma_p $ transversal  to some  $N_j$ of
complementary dimension, one has that the restriction
$f|_{\Sigma_p}$ has an ordinary Morse singularity.}
\end{Definition}

 Now we have:

\label{Section:Definition} Let $\fa$ be a codimension one smooth
foliation with singularities on a manifold $M$ of dimension $m \ge
2$. We denote by $\sing(\fa)$ the singular set of $\fa$.

\begin{Definition}[Bott-Morse singularity]
\label{Definition:bott-morsesing}{\rm   The singularities of $\fa$
are of {\it Bott-Morse type\/} if $\sing(\fa)$ is a disjoint union
of a finite number of closed    connected submanifolds, $\sing(\fa)
= \bigcup\limits_{j=1}^t N_j$, each of codimension $\ge 2$,
 and for each $p \in N_j \subset \sing(\fa)$ there exists a neighborhood $V$ of
$p$ in $M$ where $\fa$ is defined by a Bott-Morse function.}  \end{Definition}

\medskip

 That is, if  $n_j$ is the dimension of $N_j$, then there
is a diffeomorphism $\vr\colon V \to P \times D$, where $P \subset
\re^{n_j}$ and $D \subset \re^{m-n_j}$ are discs centered at the
origin, such that $\vr$ takes $\fa|_V$ into the product foliation
$P \times \G$, where $\G=\G(N_j)$ is the foliation on $D$ given by
some function with a  Morse singularity at the origin.

In other words,  $\sing(\fa) \cap V = N_j \cap V$; \, $\vr(N_j
\cap V) = P\times\{0\} \subset P \times D \subset \re^{n_j} \times
\re^{m-n_j}$, and we can find coordinates $(x,y) =
(x_1,\dots,x_{n_j,} y_1,\dots,y_{m-n_j}) \in V$ such that $N_j
\cap V$ is given by $\big\{y_1=\cdots=y_{m-n_j}=0\big\}$ and
$\fa|_V$ is given by the levels of a function $J_{N_j}(x,y) =
\sum\limits_{j=1}^{m-n_j} \la_j\,y_j^2$ where $\la_j \in \{\pm
1\}$.

\begin{figure}[ht]
\begin{center}
\includegraphics[scale=0.4]{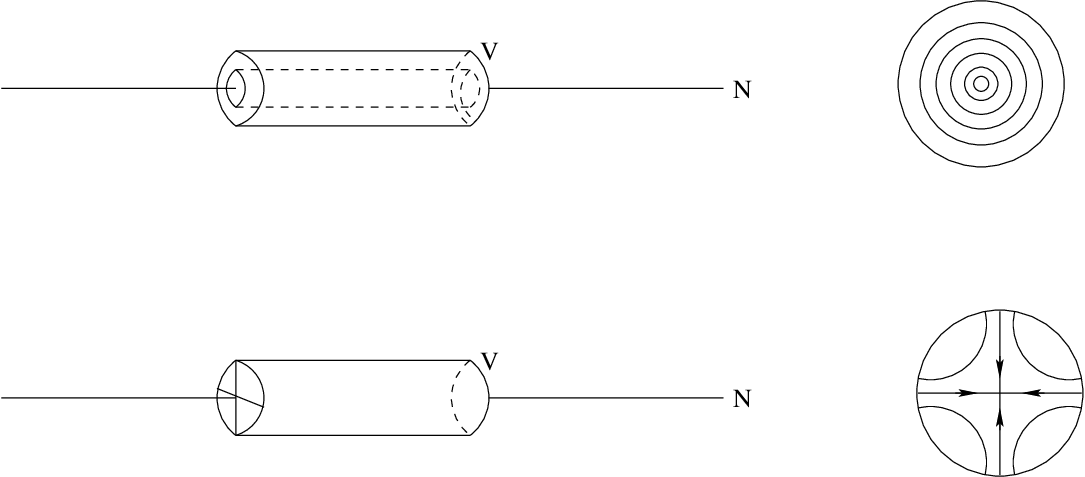}
\end{center}
\caption{Center and saddle type Bott-Morse singularities}
\end{figure}


 The discs $\Sigma_p = \vr^{-1}(x(p)\times D)$ are
transverse to $N_j$ and they are transverse to $\fa$ outside
$N_j$. The restriction $\fa|_{\Sigma_p}$ is an ordinary Morse
singularity and its Morse index does not depend on the point $p$
in the component $N_j$,
 nor on the choice of the transversal slice  $\Sigma_p$.
 We  refer to $\G(N_j) = \fa|_{\Sigma_p}$ as the {\it
transverse type\/} of $\fa$ along $N_j$, and its {\it Morse index} is
 the  Morse index of $f$ at $p$.

\begin{Definition}
\label{Definition:centersaddlecomponent} {\rm A component $N_j
\subset \sing(\fa)$ is of {\it center-type} (or just {\it a
center}) if the transverse type $\G(N_j) = \fa|_{\Sigma_p}$ of
$\fa$ along $N_j$ is a center, {\it i.e.}, its  Morse index
$r=r_j$
 is $0$ or $m - n_j$.
The component $N_j$ is  of  {\it saddle-type} (or just {\it  a
  saddle}) if
its transverse type is a saddle, {\it i.e.}, its Morse index $r$
is $\ne 0, m - n_j$.}
\end{Definition}

 In a neighborhood of a center the leaves of $\fa$ in the transversal $\Sigma_p$ are
diffeomorphic to $(m- n_j-1)$-spheres, where $n_j$ is the
dimension of $N_j$. In a neighborhood of a saddle
 we have  leaves called {\it separatrices\/} of $\fa$,
which on the transversal disc are conical leaves given by
expressions of the form $y_1^2 +\cdots+ y_r^2 = y_{r+1}^2 +\cdots+
y_{m_j}^2\ne0$; each such leaf contains $p$ in its closure. Given
a component $N_j \subset \sing(\fa)$
 whose transverse type is
a saddle, {\it a separatrix of $\fa$ through $N$\/} (or simply a
{\it separatrix of $N$})  is a leaf $L$ such that its closure $\ov
L$ contains $N_j$.
 This means that $L$ meets each small
$(m-n_j)$-disc $\Sigma$ transversal to $N_j$ in a separatrix of
$\fa|_\Sigma\,$.

As in the case of isolated singularities, these concepts do not
depend on the choice of orientations. We denote by
$\Cent(\fa)\subset \sing(\fa)$ the union of center-type components
in $\sing(\fa)$, and by $\Sad(\fa)$ the corresponding union of
saddle components.  We denote by $c(\fa)$ and $s(\fa)$ the number
of connected components in $\Cent(\fa)$ and $\Sad(\fa)$
respectively.

We say that $\fa$ is {\it compact} if every leaf of $\fa$ is
compact (and consequently $s(\fa)=0$). The foliation $\fa$ is {\it
closed} if every leaf of $\fa$ is closed off $\sing(\fa)$. In this
case, if $M$ is compact, then all leaves are compact except for
those containing separatrices of saddles in $\Sad(\fa)$: such a
leaf is contained in the compact singular variety $\ov{L}  \subset
L \cup \Sad(\fa)$, union of $\ov{L}$ and the saddle components for
which it is a separatrix. A closed foliation on a compact manifold
is compact if and only if $s(\fa)=0$ (see \cite {Se-Sc} for
details).


 We say
that $\fa$ {\it has a saddle-connection\/} if there are components
$N_1 \ne N_2$ of $\Sad(\fa)$ and a leaf $L$ of $\fa$ which is
simultaneously a separatrix of $N_1$ and $N_2$\,. If a leaf $L$ is
a separatrix of $\fa$ through $N$ and $L$ meets some transversal
$(m-n)$-disc  $\Sigma$ in two distinct separatrices of
$\fa|_\Sigma\,$,
 then we say $L$ is a {\it
self-saddle-connection\/} of $\fa$.

\begin{Definition} [Transverse orientability]
\label{Definition:orientability} {\rm Let $\fa$ be a $C^\infty$
codimension one foliation with Bott-Morse singularities on $M^m$,
$m \ge 2$.  We  say that $\fa$ {\it is transversally orientable}
if  there exists a $C^\infty$ vector field $X$ on $M$, possibly
with singularities at $\sing(\fa)$, such that $X$ is transverse to
$\fa$ outside $\sing(\fa)$.}
\end{Definition}

\begin{Definition} [Bott-Morse foliation]
\label{Definition:Bott-Morse} {\rm Let $\fa$ be a $C^\infty$
codimension one foliation on a differentiable manifold $M^m$, $m
\ge 2$. We       say that $\fa$ is a {\it Bott-Morse foliation}
if:
\begin{itemize}
\item[(i)] The singularities of $\fa$ are of Bott-Morse type.
\item[(ii)] $\fa$ has no saddle-connections on $M$ ($\fa$ may have
self-saddle-connections). \item[(iii)] $\fa$  is transversally
orientable.
\end{itemize}}
\end{Definition}

In this paper, we will be mostly dealing with {\bf closed} Bott-Morse foliations.

\subsection{Examples}
\label{Section:examples}

\begin{Example} {\rm If $\pi: M^{n+k} \to B^n$ is a $C^\infty$
fibered bundle and the manifold $B$ is equipped with a Bott-Morse
foliation, then the inverse image of the leaves determines a
Bott-Morse foliation on $M$. In particular, if $f: B \to \re$ is a
Morse function with no saddle connections, then its level surfaces
determine a Morse foliation on $B$ which lifts to a Bott-Morse
foliation on $M$. }
\end{Example}

\begin{Example}
{\rm  A {\it Poisson structure} on a smooth manifold $M$ consists
of  vector bundle morphism $\psi \colon T^*M\to TM$ satisfying an
integrability condition, whose rank at each point is called the
rank of the Poisson structure. If the rank is  constant then the
integrability condition implies one has a foliation on $M$, of
dimension equal to the rank;  the tangent space of the foliation
at each point $x \in M$ is  the image of $\psi(T^*_xM)$ in $T_xM$.
If the
 rank is not constant then one has a singular  foliation
 with singularities at the points where the
rank drops. The Dolbeault-Weinstein theorem implies that at
such points the transversal structure plays a key-role,
and generically the transverse structure is given by a Morse function.
}
\end{Example}
\begin{Example}  {\rm
Every  codimension 1 singular Riemannian foliation in the sense of
P. Molino (see for instance the last chapter of his book \cite
{Mo}; see also \cite {Hur-Tob} for more on the subject), is
Bott-Morse with only center-type components. This includes the
foliations defined by cohomogeneity 1 isometric actions of Lie
groups on smooth manifolds with special orbits.}
\end{Example}

Let us now give some  examples of Bott-Morse foliations on
3-manifolds which are important in the sequel.

\begin{Example}\label{multiple irreducible}
 {\rm Every closed oriented $3$-manifold can be
expressed as a union $M^3 = L(g) \cup L(g)'$ where $L(g), L(g)'$
are solid handlebodies of genus $g \geq 0$, glued along their
boundary $S_g$. These are called {\it Heegard splittings {\rm(or}
decompositions{\rm)}} of $M$, and $g$ is the genus of the corresponding
decomposition. The sphere is the only $3$-manifold  admitting such
a splitting with genus $0$. If $M$ has a splitting of genus $g$ then it
has splittings of all genera $\ge g$.

Given a Heegard  splitting $M^3 = L(g) \cup L(g)'$ one can take a
product neighborhood $\Sa_g \times [-\epsilon,\epsilon]$ of
$\Sa_g$ and foliate it by surfaces of genus $g$, parallel to the
boundary, with $S_g$ corresponding to $S_g \times \{0\}$. On the
level $\Sa_g \times \{\epsilon \}$ take circles
 $C_1,...,C_{g-1}$ separating this surface into $g$ components,
each of genus $1$, and deform each of these circles  to a point (see Figure \ref{Figure: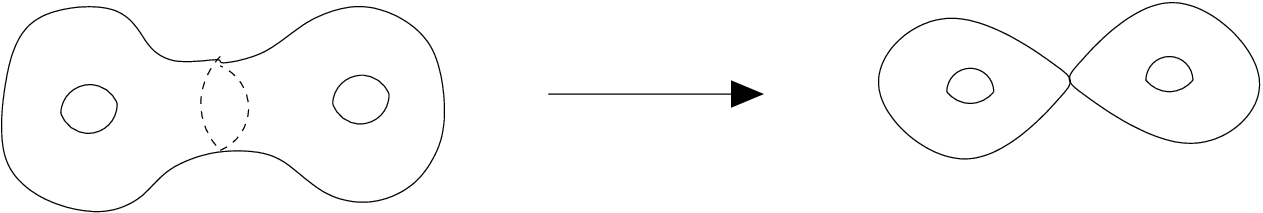}).
We get inside $L(g)$ a singular surface $S$ with $g-1$
saddle-points, which splits $L(g)$ into $(g+1)$-components: an
``outer" one, diffeomorphic to $\partial S_g \times [0,\epsilon)$;
and $g$ ``inner" components, each diffeomorphic to
 an open solid torus $T_j, \, j=1,...,g$. We can
now foliate each $T_j$ in the usual way, by copies of $S^1 \times
S^1$, having in each a circle $N_j$ as singular set, all of
center-type. We can do the same construction on the other
handle-body $L(g)'$ and get a foliation on $M^3$ with Bott-Morse
singularities.

Notice this foliation has the surface $S$ as a
separatrix through each singular point, so $\fa$  has
saddle-connections, but it is easy to change the construction
slightly to get a Bott-Morse foliation. For instance, in the above
construction, deform only the first circle $C_1$ to a point,
getting a surface $S_1$ with one saddle singularity and bounding
two "inner" components, one, say $T_1$, of genus $1$ and another
$L(g-1)$ of genus $g-1$. Foliate the ``exterior'' of $S_1$ as
before, by surfaces of genus $g$, and foliate the torus $T_1$ as
before, with a center-type singular set. Foliate also a
neighborhood in $L(g-1)$ of its boundary by surfaces of genus
$g-1$. Now choose one of these surfaces of genus $g-1$; choose
on it a circle that separates a handle from the others and repeat
the previous construction. One gets a new separatrix, a new torus
foliated by concentric tori, and an open solid region of genus
$g-2$, etc. We get finally a foliation on $ L(g)$ with $g-1$
separatrices, each with an isolated saddle singularity, $g$
foliated tori, each with a non-isolated center-type singularity,
and leaves of all genera $g-1$, $g-2$,..., $1$ filling out $L(g)$;
and similarly for $L(g)'$.
\begin{figure}[ht]
\begin{center}
\includegraphics[scale=0.50]{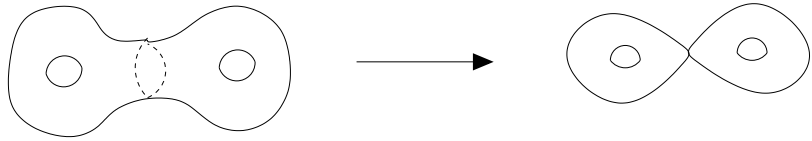}
\end{center}
 \caption{The construction of Example~\ref{multiple irreducible} for genus $g=2$. }
 \label{Figure:example2.8.eps}
\end{figure}

 }
\end{Example}

\begin{Example}
\label{Example:sphere} {\rm The sphere $S^3$ admits a Bott-Morse
foliation with singular set consisting of four isolated centers
and a non-isolated saddle as in  Figure~\ref{Figure:threesphere2};
the fourth center is at infinity. We can also foliate $S^3$ with
Bott-Morse singularities consisting of a non-isolated saddle, two
isolated centers and a non-isolated center
(Figure~\ref{Figure:threesphere1}). In both constructions an
isolated center is {\it at infinity} with respect to the saddle.

\begin{figure}[ht]
\begin{center}
\includegraphics[scale=0.35]{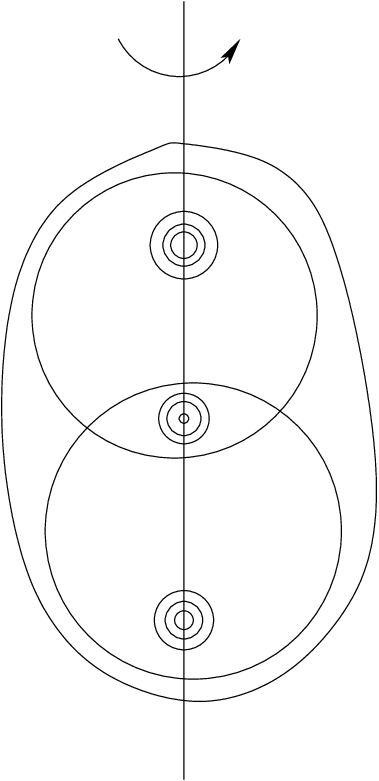}
\end{center}
 \caption{A foliation of $S^3$ by four isolated centers and a non-isolated saddle. }
 \label{Figure:threesphere2}
\end{figure}

\begin{figure}[ht]
\begin{center}
\includegraphics[scale=0.50]{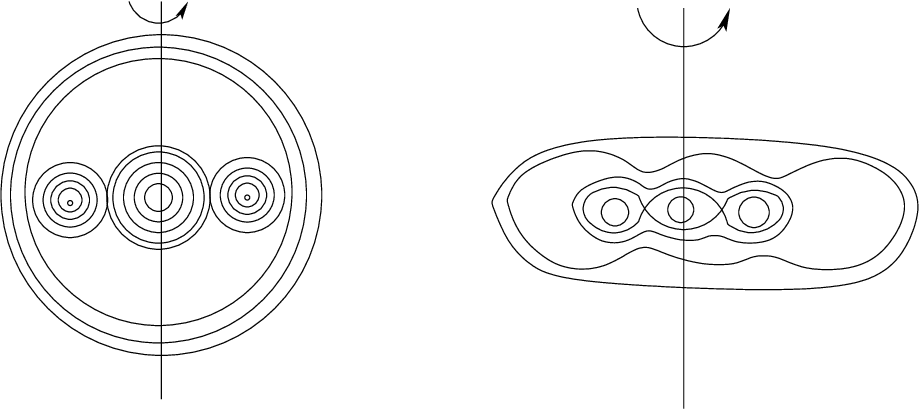}
\end{center}
\caption{A foliation of $S^3$ by two isolated centers, a
non-isolated saddle and a non-isolated
center.}\label{Figure:threesphere1}
\end{figure}

} \end{Example}

\begin{Example}
\label{Example:trivialfoliation} {\rm A closed 3-manifold $M$
admits a transversally oriented non-singular compact foliation
$\fa_0$ if and only if it fibers over the circle $S^1$. Every such
manifold can be equipped with a closed Bott-Morse foliation $\fa$
such that $c(\fa)=2\, s(\fa)=2\, k$, where $k\in \mathbb N$ is any
given natural number. In fact, given any codimension 1 foliation
on a 3-manifold $M$, one can replace a flow-box where the
foliation is regular, by a box where the new foliation has  two
isolated centers and one non-isolated saddle; this is depicted, in
reverse order, in Figure~\ref{Figure:notproduct1}. This process
can be repeated as many times as we want. If the original
foliation was closed, so is the new one.  Thus the equality
$c(\fa)=2\,s(\fa)$ is not restrictive (cf. Theorem
\ref{Theorem:Center-Saddledimensionthree} ): every closed,
oriented 3-manifold $M$ admits Bott-Morse foliations  with
$c(\fa)=2\,s(\fa) = 2k$ for each integer $k \ge 1$; and if $M$
fibers over $S^1$ then the foliation can be chosen to be closed.

Notice that in the previous construction the saddles and the centers
have
different dimensions.

\vglue.1in Similarly, given a closed manifold $M$ of dimension $m
\geq 2$, equipped with a non-singular compact foliation, we can
modify the foliation as follows. Choose a flow box region
$R\subset M$ and replace the foliation  in $R$ by a foliation with
an isolated center and an isolated saddle of type $x_m^2 -
\sum\limits_{j=1}^{m-1}x_j ^2=0$,   obtained by rotating the
Figure~\ref{Figure:centroseladim2trivial0} with respect to an axe
that passes through the center and the saddle.

\begin{figure}[ht]\label{fig:center-saddle}
\begin{center}
\includegraphics[scale=0.4]{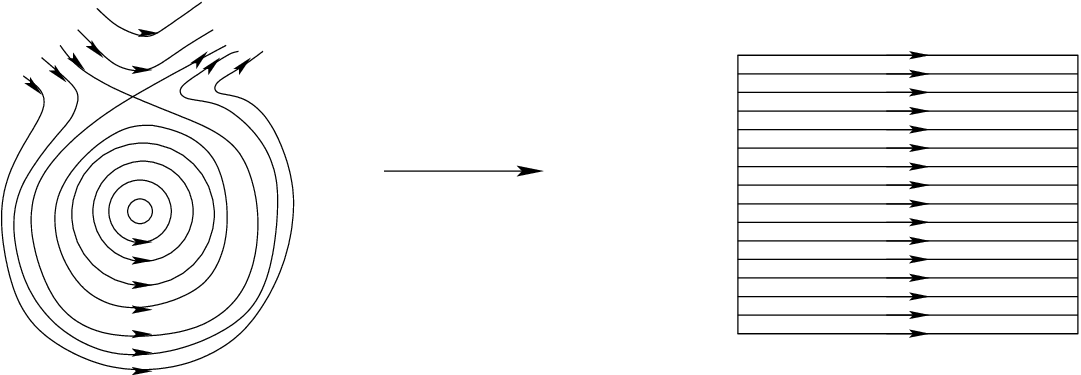}
\end{center}
\caption{} \label{Figure:centroseladim2trivial0}
\end{figure}
}\end{Example}

\noindent This produces a closed Bott-Morse foliation with $k$
isolated centers and $k$ isolated saddles. Now, we consider a
3-manifold $M$ equipped with a non-singular  compact
 foliation $\fa$. Let $L_0$ be a leaf of $\fa$ and $\gamma_0\subset
 L_0$ a $C^\infty$ closed path. The holonomy of $\gamma_0$ is
 trivial and we can take a  section $\Sigma$ transverse to
 $\gamma_0$, diffeomorphic to the square $Q=(-1,1)\times (-1,1)$,
 with $\Sigma\cap \gamma_0=\{p_0\}$, and a neighborhood $U$ of
 $\gamma_0$ diffeomorphic to the solid torus $S^1 \times Q$. In this
 neighborhood we can replace $\fa$ by the foliation obtained as
 the product  by $S^1$ of the two-dimensional foliation on the left in
 Figure~\ref{Figure:centroseladim2trivial0}. Repetition of this  process
 yields closed Bott-Morse foliations on $M$  with $k$
non-isolated centers and $k$ non-isolated saddles as singular set
(cf. Theorem \ref{Theorem:Center-Saddledimensionthreecircles}).

\begin{Example}
{\rm On the other hand, foliations with Bott-Morse singularities
satisfying the inequality $c(\fa) > 2 s(\fa)$ exist  in any 3-manifold that can be obtained by gluing two solid tori along their
boundary. These are the manifolds that
 appear in
Theorem~\ref{Theorem:Center-Saddledimensionthree}. For instance, in  these
cases one can foliate each torus by concentric tori and get a
foliation with two center components and no saddles, so $c(\fa) = 2$ and $ s(\fa) = 0$.

\begin{figure}[ht]
\begin{center}
\includegraphics[scale=0.45]{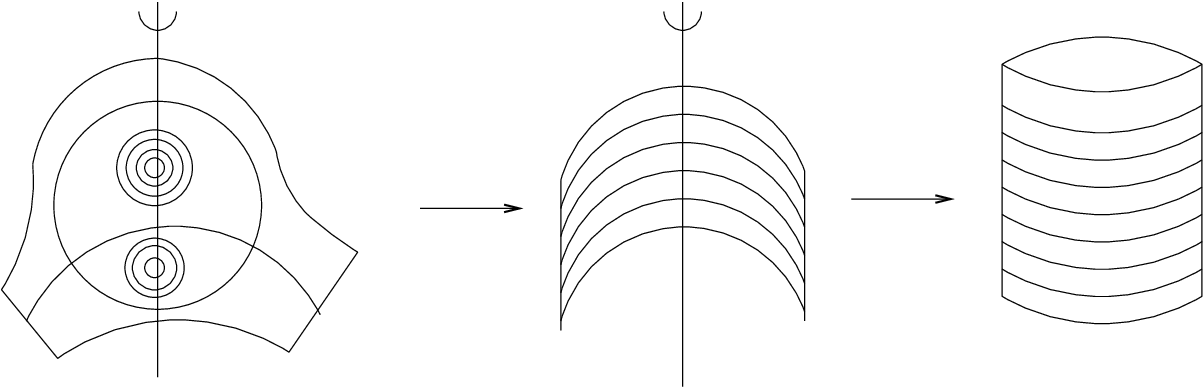}
\caption{{Reversing the arrows we get two isolated centers and a
non-isolated saddle from a flow box.}} \label{Figure:notproduct1}
\end{center}
\end{figure} }
\end{Example}

\begin{Example}
{\rm Consider now a closed $3$-manifold $M$ with a {\it
nonsingular} codimension one foliation $\fa_o$ with a Reeb
component $R\subset M$. In the solid torus region $R$ we replace
$\fa_o$ by a singular compact Bott-Morse foliation by concentric
tori. The resulting foliation $\fa$ on $M$ can be assumed smooth.
It is a Bott-Morse foliation with only center-type components but
it is not closed, and the manifold $M$ may not be as in
Theorem~\ref{Theorem:Center-Saddledimensionthree}. }
\end{Example}

\begin{Example} {\rm Let $T^2$ be a closed surface equipped with a
foliation $\fa_1$ defined by a Morse function with exactly three
singularities, two of center-type and one of saddle-type, and let
$M^3$ be an $S^1$-bundle over $T^2$. Then, as in the first example
above, $\fa_1$ lifts to a Bott-Morse foliation $\fa$ on  $M$ with
singular set consisting of three embedded circles, two of them
being (non-isolated) centers and the other a saddle. This
foliation satisfies the inequality on
Theorem~\ref{Theorem:Center-Saddledimensionthreecircles} and $M$
need not be as claimed in that theorem. Nevertheless, according to
Eells and Kuiper (\cite{Ee-Ku1}, \cite{Ee-Ku2}) the only
possibility for $T^2$ is $T^2=\mathbb RP(2)$ and the resulting
manifold $M$ is non-orientable.}
\end{Example}


\section{Holonomy of invariant subsets}

In this section we extend to Bott-Morse foliations the classical
notion of holonomy of leaves, that we recall below.

\subsection{Holonomy of a leaf}
This notion  is originally found in the work of Ehresmann and Shih
\cite{Er-Sh} and was further developed in the subsequent work of
Reeb \cite{Reebthesis}. Let $\fa$ be a codimension $k$ foliation
on a manifold $M$ of dimension $m=k+l$. A {\it distinguished}
neighborhood for $\fa$ in $M$ is an open subset $U\subset M$ with
a coordinate chart $\vr \colon U \to \vr(U)\subset \mathbb R^m$
such that $\vr(U)=D_1^l\times D_2^k$ is the product of discs in
$\mathbb R^m=\mathbb R^l\times \mathbb R^k$ and the  leaves of the
restriction $\fa\big|_U$ ({\it i.e.}, the plaques of $\fa$ in $U$)
are of the form $\vr^{-1}(D_1 ^l\times\{y\}), \, y \in D_2 ^k$. If
$V\subset U$ is another distinguished open set,  we say that $V$ is
{\it uniform} in $U$ if every plaque of $\fa$ in $U$ meets at most
one plaque of $\fa$ in $V$. This means that the natural map on
leaf spaces $V/\fa\big|_V \to U/\fa\big|_U$ is injective. In
codimension one every distinguished open set $V$ contained in
another distinguished open set $U$ is always uniform. In general
given a finite collection of distinguished open sets $U_1,...,U_r$
for $\fa$ in $M$, every point in the intersection $U_1\cap...\cap
U_r$ has a fundamental system of distinguished open sets which are
uniform with respect to each $U_j$ (cf. \cite{Go}, Lemma 1.2, page
71).

 A locally finite
open covering $\mathcal U=\{U_j\}_{j\in J}$ of $M$ is {\it
regular} for $\fa$ if: (1) each open set $U_j$ is distinguished
for $\fa$; and (2) any two or three open subsets of $\mathcal U$
having a connected intersection are uniform with respect to a same
distinguished open subset for $\fa$. In particular, (3) each
plaque of an open subset in $\mathcal U$ meets at most one plaque
of another  open set in $\mathcal U$. Every open cover of $M$ can
be refined into a regular cover (\cite{Go} Proposition 1.6, page
73).

A {\it chain}  of $\mathcal U$ is a finite
collection $\mathcal C=\{U_1,...,U_r\}$ of open subsets in
$\mathcal U$  such that two consecutive elements have non-empty
intersection. The chain $\mathcal C$ is {\it closed} if $U_r=U_1$.

Let now $\mathcal U=\{U_j\}_{j\in J}$ be a regular covering of $M$
with respect to $\fa$ and for each index $j\in J$ denote by
$\Sigma_j$ the  space of leaves of $\fa\big|_{U_j}$ with projection
$\pi_j \colon U_j \to \Sigma_j$. The foliation charts $\vr_j
\colon U_j \to \mathbb R^m=\mathbb R^l \times \mathbb R^k$ allow us
to identify each  space $\Sigma_j$ with a section (a disc)  transverse  to
$\fa$ in the chart $U_j$. By the uniformity of the open sets in $\U$,
 if $U_i \cap U_j\ne \emptyset$ then there is a local
 diffeomorphism $h_{ij}\colon \Sigma_i \to \Sigma_j$ such that
 $\pi_j=h_{ij}\colon \pi_i$ on $U_i \cap U_j$;  we also have $h_{ji}=h_{ij}^{-1}$
 and on each non-empty intersection $\pi_i(U_i\cap U_j \cap U_u)$ we have
 $h_{uj}\circ h_{ji}=h_{ui}$. The collection $\mathcal H(\fa)$ of local
 diffeomorphisms $h_{ij}$ defines the {\it holonomy pseudo-group}
 of $\fa$ with respect to the regular covering $\U$. By the above
 properties of regular coverings, this holonomy pseudogroup is
 intrinsically defined by the foliation $\fa$ and its
 localization to a  leaf $L$ of $\fa$ gives the {\it holonomy group}
 of the leaf $L$.

 The
 result below comes from the proof of the Complete  Stability
Theorem of Reeb (cf. \cite{Go}):

\begin{Proposition}
\label{Proposition:reebstability} Let $\fa$ be a transversely
oriented, codimension one, nonsingular closed foliation on a
connected manifold $T$, not necessarily compact.
\begin{itemize}
\item[{\rm (i)}] Let $L$ be a compact leaf of $\fa$ and let $L_n$
be a sequence of compact leaves of $\fa$ accumulating to $L$. Then
given a  neighborhood $W$ of $L$ in $T$ one has $L_n\subset W$ for
all $n$ sufficiently  large.

\item[{\rm (ii)}] Assume that $\fa$ has  a compact leaf with
trivial holonomy and let $\Omega(\fa)$ be the set of compact
leaves $L\in\fa$ with trivial holonomy. Then $\Omega(\fa)$ is open
in $T$ and $\partial\Omega(\fa)$ contains no compact leaf. Indeed,
a compact leaf which is a limit of compact leaves with trivial
holonomy also has trivial holonomy.

\item[{\rm (iii)}] Let $L$ be a compact leaf with finite holonomy
group. Then the holonomy of $L$ is trivial and there is a
fundamental system of invariant neighborhoods $W$ of $L$ such that
$\fa\big|_W$ is equivalent to the product foliation on $L\times
(-1,1)$ with leaves $L\times \{t\}$.

\end{itemize}
\end{Proposition}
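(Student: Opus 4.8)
The three statements are all local around $L$, and the plan is to deduce them from the holonomy machinery above, essentially extracting them from the proof of Reeb's Complete Stability Theorem; I would handle them in the order (iii), (i), (ii). Throughout, fix a point $p\in L$, a short arc $\Sigma\cong(-1,1)$ transverse to $\fa$ at $p$ with $p\leftrightarrow 0$, and — using transverse orientability, so that the normal bundle of $L$ is trivial — a tubular neighbourhood $W_0\cong L\times(-1,1)$ of $L$ compatible with $\Sigma$. The holonomy of $L$ is then a homomorphism from $\pi_1(L,p)$ into the group of orientation-preserving germs of diffeomorphisms of $(\re,0)$, generated by finitely many germs because $L$ is compact. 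For \textbf{(iii)}: a finite group of germs of diffeomorphisms of $(\re,0)$ is linearizable by Bochner's theorem, and an orientation-preserving linear germ $t\mapsto\lambda t$ of finite order has $\lambda>0$ and $\lambda^{m}=1$, hence $\lambda=1$; so finite holonomy of $L$ is trivial. Given triviality, the usual holonomy transport along loops of $L$ gives, for each class of $\pi_1(L,p)$, a return map on $\Sigma$ equal to the identity near $0$; compactness of $L$ lets one choose $\epsilon>0$ on which all these transports are defined, and saturating $\{p\}\times(-\epsilon,\epsilon)$ produces a diffeomorphism $L\times(-\epsilon,\epsilon)\to W$ carrying each $L\times\{t\}$ onto a leaf; letting $\epsilon$ decrease to $0$ yields the fundamental system of invariant neighbourhoods.

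For \textbf{(i)} it suffices to treat small neighbourhoods, so work inside $W_0$ and fix a finite cover of $L$ by foliation boxes contained in $W_0$, each meeting $L$ in a single plaque, so that the transition maps on the associated transversals are continuous and fix $0$. Since $L_n$ accumulates at $L$, choose $x_n\in L_n\cap\Sigma$ with transverse coordinate $t_n\to 0$. The scheme is to continue the plaque of $L_n$ through $x_n$ from box to box inside $W_0$, producing a ``parallel copy'' of $L_n$ spread over all of $L$; an open-and-closed argument — using that $L_n$ is a leaf, and that it is connected and compact — then identifies this parallel copy with all of $L_n$, so that the tubular projection $L_n\to L$ is a finite covering. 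Everything hinges on the successive transverse levels staying small, i.e.\ on the finite holonomy-invariant set $L_n\cap\Sigma$ shrinking to $\{0\}$ as $n\to\infty$; granting this, $L_n\subset L\times(-\epsilon_n,\epsilon_n)$ with $\epsilon_n\to0$, hence $L_n\subset W$ for $n$ large.

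For \textbf{(ii)}: $\Omega(\fa)$ is open because if $L\in\Omega(\fa)$ then by (iii) it has a product neighbourhood $W\cong L\times(-\epsilon,\epsilon)$, every leaf of which is a compact copy of $L$ with a product neighbourhood, hence lies in $\Omega(\fa)$. For the remaining assertion it suffices to prove that a compact leaf $L$ which is a limit of compact leaves $L_n\in\Omega(\fa)$ has trivial holonomy, for then such an $L$ lies in the open set $\Omega(\fa)$ and cannot be in $\partial\Omega(\fa)$. Fix a loop $\gamma$ in $L$ with holonomy germ $h_\gamma$. By compactness of $L_n$ the point $t_n\in L_n\cap\Sigma$ has a finite $h_\gamma$-orbit, so $(h_\gamma)^{m_n}(t_n)=t_n$ for some $m_n\ge 1$; if $0$ were an isolated fixed point of $h_\gamma$ the difference $h_\gamma(t)-t$ would have constant sign on each side of $0$ and iteration would give $(h_\gamma)^{m}(t_n)\ne t_n$ for $t_n\ne0$, a contradiction (and $t_n=0$ forces $L_n=L$), so $0$ is a non-isolated fixed point of $h_\gamma$. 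Now triviality of the holonomy of $L_n$ is used: $\gamma^{m_n}$ lifts to a loop in $L_n$ whose holonomy is the germ $(h_\gamma)^{m_n}$ at $t_n$ and hence is the identity near $t_n$; running this over $n$ (with $t_n\to0$) and combining with the germ rigidity from (iii) upgrades ``$0$ is a non-isolated fixed point'' to $h_\gamma=\Id$ near $0$, so the holonomy of $L$ is trivial.

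\textbf{Main obstacle.} The two non-routine points are the level control in (i) — showing the finite holonomy-invariant sets $L_n\cap\Sigma$ shrink to $0$, equivalently that a compact leaf merely meeting a thin slab about $L$ is \emph{entirely} confined to a thin slab — and the final upgrade in (ii) from ``non-isolated fixed point'' to ``identity germ''. Both rest on the interplay between compactness of $L$, the finite generation of its holonomy and compactness of the $L_n$, which is exactly the technical content of the proof of Reeb's Complete Stability Theorem; I would carry out these estimates following \cite{Go}.
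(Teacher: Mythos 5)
The first thing to note is that the paper does not prove this proposition at all: it is introduced with the sentence ``The result below comes from the proof of the Complete Stability Theorem of Reeb (cf.\ \cite{Go})'', so the entire content is outsourced to Godbillon. Your sketch therefore cannot be compared to an internal argument; what it does is reconstruct the standard route (the one in \cite{Go} and in Camacho--Lins Neto, Ch.\ IV--V), and in outline it is the right one. Part (iii) is correct and complete in substance (finite order plus orientation preservation forces an identity germ, and the saturation of a short transversal gives the product neighborhoods). For (i) and for the openness of $\Omega(\fa)$ in (ii) you correctly isolate the genuinely technical point --- confining a compact leaf that merely enters a thin slab around $L$ to a slightly thicker slab --- and defer it to \cite{Go}; since the paper defers the whole proposition there, this is a defensible level of detail.

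There is, however, one step where the mechanism you invoke would not close the argument even if fully written out: the final ``upgrade'' in (ii) from ``the fixed points of $h_\gamma$ accumulate at $0$'' to ``$h_\gamma=\Id$ near $0$''. The ``germ rigidity from (iii)'' is the statement that a \emph{finite} group of orientation-preserving germs is trivial; here the holonomy group of $L$ is not known to be finite, and a single orientation-preserving germ can perfectly well have fixed points accumulating at the origin without being the identity (flat perturbations of the identity such as $t\mapsto t+e^{-1/t^2}\sin(1/t)$, smoothed at $0$). What actually closes the gap is the hypothesis that $\fa$ is \emph{closed}, which so far your argument has not used: one shows that $\Omega(\fa)\cap\Sigma$ is an open set of fixed points of $h_\gamma$ accumulating at $0$, and that the leaves through the complementary gaps, being closed and trapped in the compact region bounded by nearby compact leaves of $\Omega(\fa)$, are themselves compact; hence \emph{every} orbit of $h_\gamma$ near $0$ is finite, and the order-preserving argument (every finite orbit of an order-preserving germ consists of fixed points --- this is Lemma 5, p.~72 of \cite{Camacho-Lins Neto}, which the paper itself invokes for exactly this purpose in the proof of Theorem~\ref{Theorem:localproduct}) then gives $h_\gamma=\Id$. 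You should replace the appeal to (iii) at this point by this use of closedness; as stated, that sentence is the one genuine gap in the proposal.
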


\subsection{Holonomy of a component of the singular set}\label{Section:holonomy-singular-set}
We consider again a Bott-Morse foliation $\fa$.
We recall the  notion, introduced in \cite{Se-Sc},  of {\it
holonomy} of a component $N$ of the singular set of $\fa$ of
dimension $n \ge 0$. Consider a finite open cover $\U =
\{U_1,...,U_\ell, U_{\ell+1}\}$ of $N$ by open subsets $U_j
\subset M$ with $U_{\ell+1}=U_1$ and charts $\vr_j\colon U_j \to
\vr_j(U_j) \subset \re^m$ with the following properties:

(1) Each $\vr_j\colon U_j \to \vr_j(U_j) \subset \re^m$ defines a
local product trivialization of $\fa$; \, $\vr_j(U_j \cap N)$ is
an $n$-disc $D_j$ and $\vr_j(U_j)$ is the product of  $D_j$ by an
$m-n$ disc.

(2) $U_j \cap U_{j+1} \ne \emptyset, \forall j=1,...,\ell$.

(3) If $U_i \cap U_j \ne \emptyset$ then there exists an open
subset $U_{ij} \subset M$ containing $U_i \cup U_j$ and a chart
$\vr_{ij}\colon U_{ij} \to \vr_{ij}(U_{ij}) \subset \re^m$ of $M$,
such that $\vr_{ij}$ defines a product structure for $\fa$ in
$U_{ij}$ and $U_{ij} \cap N \supset [(U_i \cup U_j) \cap N] \ne
\emptyset$.
 In each $U_j$ we choose a transverse disc
$\Sigma_j$\,, \, $\Sigma_j \cap N = \{q_j\}$ such that
$\Sigma_{j+1} \subset U_j \cap U_{j+1}$ if $j \in
\{1,\dots,\ell\}$.

In each $U_j$ the foliation is given by a smooth function $F_j
\colon U_j \to \re$ which is the natural trivial extension of its
restriction to  the transverse disc $\Sigma_j$. There is a $C^\infty$ local diffeomorphism
$\vr_j\colon (\re,0) \to (\re,0)$ such that
$F_{j+1}\big|_{\Sigma_{j+1}} = \vr_j \circ
F_j\big|_{\Sigma_{j+1}}$\,. This implies that $F_{j+1} =
\vr_j\circ F_j$ in $U_j \cap U_{j+1}$. Notice that, as in the
classical case of non-singular foliations (see \cite{Camacho-Lins
Neto} chapter II or \cite{Go} Definition 1.5 page 72),  by
condition (3), if $U_i \cap U_k \ne \emptyset$, then the existence
of the maps $\vr_{ij}$ grants that every plaque $\fa$ in $U_i
\backslash N$ intersects at most one plaque of $U_k\backslash N$.
The {\it holonomy map associated to $N$ \/}
 is the local diffeomorphism $\vr\colon
(\re,0) \to (\re,0)$ defined by the composition $\vr = \vr_\ell
\circ\cdots\circ \vr_1$\,. This map is well-defined up to
conjugacy in $\Diff(\re,0)$.

\subsection{Holonomy of  invariant subsets of codimension one}

 We  now extend the notion of holonomy to
 connected invariant subsets $\Lambda$ of codimension one of a
 Bott-Morse foliation $\fa$. If $\Lambda$ is a compact
leaf, this only means its holonomy as a leaf of the restriction
$\fa_0$ of $\fa$ to $M\setminus \sing\fa$, and if $\Lambda$ is a component of $sing(\fa)$, then its holonomy was introduced above.
The new case is when
  $\Lambda$ is the union of a saddle-type singular component (of
arbitrary dimension $\ge 0$) with some of its separatrices.

 Let us assume  first that $N_0=\{p\}$ is an isolated saddle and
 $\Lambda = N_0 \cup \tau$, where $\tau$ is union of separatrices;
we follow \cite{Camacho-Scardua}.  Notice that in a small
neighborhood of $p$, $\tau$ can consist of one or two components
$\tau_1$ and $\tau_2$, and that this can only happen if $p$ has  Morse index $1$  or $m-1$. In this case $\Lambda$ locally divides the manifold $M$ into
three connected components. One of them, say $R_3$, is the union
of (regular) leaves which are hyperboloids of one sheet, and the
others, say $R_1$ and $R_2$, are   union of one  connected
components of hyperboloids of two sheets, as depicted in Figure
~\ref{Figure:holonomy}. Let $\gamma\colon [0,1] \to \Lambda$ be a
piecewise smooth  path on $\Lambda$  which passes through the
singularity $p$, going from $\tau_1$ to $\tau_2$.  Fix a
neighborhood  $U$ of $p\in \sing\fa$ where $\fa$ is given by a
Morse function $f$ with a unique singularity at $p$. Using the
level sets of $f$, the holonomy along $\gamma$ can be defined in
the usual way on $R_3$,  by lifting paths to the leaves. Let us
extend this map to the other regions  $R_1$ and $R_2$.    Let
$T_0$ and $T_1$ be local transverse sections to $\fa$ at $\gamma(0)$ and
$\gamma(1)$ respectively. The {\it holonomy} along $\gamma$ is the
map which carries  $t\in T_0$ to $f^{-1}(f(t))\cap T_1 \in T_1$.
This holonomy map is well-defined even if $\gamma$ is not
contained in $\{p\}\cup \tau_1$.

The extension of this concept to the case when the isolated saddle
has Morse index different from $1$ and $ m-1$ is just as in the
case of the region $R_3$ above, so we leave it to the reader.

\begin{figure}[ht]
\begin{center}
\includegraphics[scale=0.50]{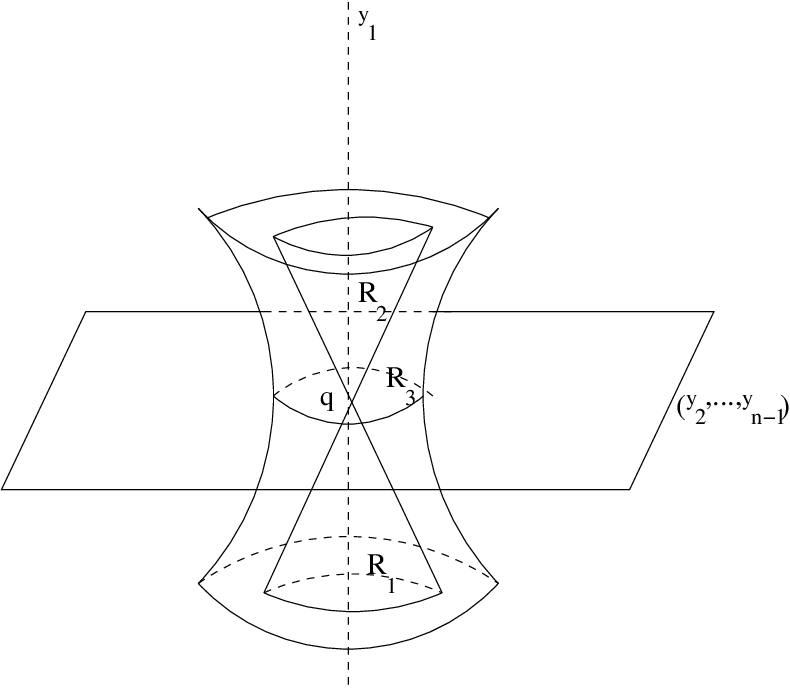}
\end{center}
\caption{Holonomy of an isolated saddle} \label{Figure:holonomy}
\end{figure}

Now we consider the case when $N_0$ is non-isolated of dimension $n_0 \geq
1$. Let $\gamma\colon [0,1] \to \Lambda$ be a path for which we
wish to define the holonomy map.
By composition with maps
already obtained as in Section~\ref{Section:holonomy-singular-set},
we can assume that the image of $\gamma$ is contained in a
neighborhood $U$ of some point $p \in N_0$, which is diffeomorphic
to a product $D_0 \times V$, where $D_0 \subset \mathbb R^{n_0}$ is
the unit disc centered at the origin and $V\subset \mathbb
R^{m-n_0}$ gives the transverse model of $\fa$ along $N_0$.
 In other words, in $U$ the foliation is
given by a Bott-Morse function $f\colon U \to \mathbb R$ of
product type: there are local coordinates $(x,y)\in U
\cong D_0 \times V$ such that $f(x,y)=g(y)$ where $g(y)$ is a
Morse function which describes the transverse type of $\fa$ along
$N_0$. Then we can
introduce the holonomy map $h$ associated to the path
$\gamma\subset U$  using exactly the same construction as above,  by
setting $h\colon T_0 \to T_1$ to be the map
which carries $t\in T_0$ into $f^{-1}(f(t))\cap T_1 \in T_1$.

\medskip

The following result is used in the sequel. When the invariant set
is a compact leaf, this is the classical Reeb local stability
theorem (\cite{Go}, \cite{Camacho-Lins Neto}), extended in
\cite{Se-Sc} to the case of centers.

\begin{Theorem}\label{Theorem:localproduct}
\label{Lemma:trivialholonomy} Let $\fa$ be a closed Bott-Morse foliation on $M$ and let $\Lambda\subset M$ be a compact
leaf, a component of the singular set or the union of a saddle
component $N\subset \sing(\fa)$ with some of its separatrices.
Then:

\begin{enumerate}

\item  The holonomy group of  $\Lambda$ is trivial.

\item  There is a fundamental system of invariant neighborhoods
$W_\alpha$ of $\Lambda$ in $M$ such that on each $W_\alpha$ the
foliation $\fa\big|_{W_\alpha}$ is given by a Bott-Morse function
$f_\alpha\colon W_\alpha \to \re$.
\end{enumerate}
\end{Theorem}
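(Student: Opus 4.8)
The plan is to treat the three types of invariant sets $\Lambda$ — compact leaf, singular component, and saddle-plus-separatrices — in a uniform way, reducing each to a statement about a certain holonomy pseudogroup and then invoking the local model together with Reeb-type arguments (Proposition~\ref{Proposition:reebstability}).

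First I would dispose of the two cases already handled or nearly so. If $\Lambda$ is a compact leaf of the nonsingular foliation $\fa_0 = \fa|_{M\setminus\sing\fa}$, then $\Lambda$ sits in $M\setminus\sing\fa$; since $\fa$ is closed, nearby leaves are compact, and applying Proposition~\ref{Proposition:reebstability}(ii)--(iii) in a tubular neighborhood (which is an open connected manifold carrying the nonsingular closed foliation) gives triviality of the holonomy and a product fundamental system of neighborhoods; the Bott-Morse function $f_\alpha$ is then just a coordinate transverse to the product. If $\Lambda$ is a component $N$ of $\sing(\fa)$, the holonomy map $\vr\in\Diff(\re,0)$ was constructed in Section~\ref{Section:holonomy-singular-set}; I would argue it is trivial by noting that along $N$ the leaves near a center are spheres and near a saddle the level sets of the transverse Morse function are preserved — in both cases the holonomy preserves the function $J_N(x,y)=\sum\la_j y_j^2$ (or the transverse Morse function $g$), so $\vr$ commutes with a map having $0$ as a non-flat fixed point, forcing $\vr=\mathrm{id}$ by the standard argument (a germ commuting with $t\mapsto -t$ or fixing the level structure of a Morse germ must be the identity). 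Then a tubular neighborhood of $N$ glued from the local product charts produces $f_\alpha$.

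The genuinely new case is $\Lambda = N\cup\tau$ with $N$ a saddle and $\tau$ some of its separatrices. Here I would proceed as follows. Cover $\Lambda$ by finitely many distinguished charts: charts around points of $N$ (the product model $D_0\times V$ with $f(x,y)=g(y)$, $g$ a saddle Morse germ) and charts along each separatrix leaf $L_i\subset\tau$ in $M\setminus\sing\fa$, which has trivial holonomy as a leaf by Reeb applied on a neighborhood that deformation-retracts onto $L_i\cup N$ — one must check that $L_i\cup N$ is simply connected enough, or more precisely that the holonomy group of $\Lambda$ is generated by loops each of which can be pushed into a single product chart around $N$, where it is trivial by the center/singular-component argument above. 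The holonomy around such a loop is, by the construction recalled just before the theorem, the map $t\mapsto f^{-1}(f(t))\cap T_1$, which is a genuine local diffeomorphism precisely because it respects the Morse germ $g$; composing around a closed chain it respects $g$ globally and hence is the identity on the region where $g$ is a submersion, and extends by continuity. Thus part (1) follows. For part (2), given triviality of holonomy I would build the invariant neighborhoods $W_\alpha$ by gluing the local Bott-Morse functions $f_j$ from the cover: on overlaps $f_{j+1}=\vr_j\circ f_j$ with each $\vr_j$ a diffeomorphism of $(\re,0)$, and triviality of holonomy means the cocycle $\{\vr_j\}$ is a coboundary, so after reparametrizing we may take all $f_j$ to agree, producing a single global $f_\alpha\colon W_\alpha\to\re$; shrinking the transverse discs gives the fundamental system.

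The main obstacle I expect is the bookkeeping in the saddle case: one must be careful that when $\tau$ has two local components $\tau_1,\tau_2$ meeting at $N$ (Morse index $1$ or $m-1$), a loop in $\Lambda$ through $p$ going from $\tau_1$ to $\tau_2$ has well-defined holonomy on \emph{all three} local regions $R_1,R_2,R_3$ simultaneously and these glue consistently — this is exactly the point where the ``no saddle-connections'' hypothesis and the level-set definition of holonomy via $f^{-1}(f(t))$ are essential, since they guarantee the holonomy is the identity not merely on the hyperboloid-of-one-sheet region but across the singular fiber. Once that compatibility is secured, assembling $f_\alpha$ is a routine cocycle-triviality argument and the fundamental-system statement is automatic from the construction.
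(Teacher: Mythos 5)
Your treatment of the compact-leaf case is correct, and in fact the argument you use there --- $\fa$ closed $\Rightarrow$ nearby leaves compact $\Rightarrow$ every orbit of a holonomy germ $h\in\Diff^\infty(\re,0)$ is finite $\Rightarrow$ $h=\mathrm{Id}$ because $h$ is orientation preserving (Lemma 5, p.~72 of Camacho--Lins Neto) --- is the \emph{entire} proof of part (1) in the paper, applied uniformly to all three types of $\Lambda$. The holonomy of a singular component or of $N\cup\tau$ is still a germ in $\Diff^\infty(\re,0)$ acting on a transverse segment, so no case distinction is needed.

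The arguments you substitute in the other two cases contain genuine gaps. For a component $N$ of $\sing(\fa)$ the holonomy is the composition $\vr=\vr_\ell\circ\cdots\circ\vr_1$ of the transition germs defined by $F_{j+1}=\vr_j\circ F_j$ on overlaps; it does \emph{not} a priori ``preserve the function $J_N$'' or the Morse germ $g$ --- the assertion that it does is exactly the existence of a single first integral on a full neighborhood of $N$, i.e.\ part (2) of the theorem, which is deduced \emph{from} part (1), so your argument is circular. (Note also that Proposition~\ref{Proposition:Stabilitycharacterization} explicitly contemplates center components with non-finite holonomy, so sphericity of nearby transverse leaves cannot by itself force triviality; and the auxiliary claim that a germ commuting with $t\mapsto -t$ must be the identity is false --- any odd germ, e.g.\ $t\mapsto 2t$, commutes with it.) In the saddle case the same circularity recurs (``composing around a closed chain it respects $g$ globally''), and the reduction of the holonomy group of $\Lambda$ to loops that ``can be pushed into a single product chart around $N$'' fails: $\Lambda$ can be, say, a torus or bitorus pinched at $N$, whose fundamental group is not carried by any one distinguished chart. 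Replacing all of this by the finite-orbit argument repairs part (1); your cocycle-triviality construction of $f_\alpha$ in part (2) is then essentially the holonomy-extension argument the paper uses, although the paper routes the center and saddle cases through Lemma 2.5 of \cite{Se-Sc} to first get a Bott-Morse first integral near $N$ before saturating.
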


\begin{proof}
In all the cases, since $\fa$ is closed, any leaf close enough to
$\Lambda$ but not contained in $\Lambda$, is compact. Therefore
  a  holonomy map $h$  corresponds to a local
diffeomorphism $h\in \Diff^\infty(\mathbb R,0)$ such that every
orbit of $h$ is finite. Since $h$ is orientation preserving this
implies (as in \cite{Camacho-Lins Neto} Lemma 5 page 72) that
$h=Id$, proving the first statement. To prove the second statement, assume first that $\Lambda = L$ is a compact
leaf. Since it has trivial holonomy, the classical local
stability theorem implies that $L$ admits a fundamental system of
invariant neighborhoods $W$ where $\fa$ is a trivial product
foliation,  given by a submersion, as claimed. If we now assume
that $\Lambda$ is a center type component in $\sing(\fa)$, then the claim that
there is a
fundamental system of invariant neighborhoods of $\Lambda$ where
$\fa$ is equivalent to a fibre bundle over $\Lambda$
follows from Theorem
B (actually from Lemma 2.5) in \cite{Se-Sc}. Finally,
assume that $\Lambda$ is the union of a saddle $N\subset
\sing(\fa)$  and some of its separatrices.  Because  the holonomy
of $\Lambda$ is trivial, {\it a fortiori} also the holonomy of
$N\subset \sing(\fa)$ is trivial and  we apply Lemma 2.5 in
\cite{Se-Sc} to obtain a Bott-Morse function $f_0$ which defines
the foliation in a small neighborhood $U(N)$ of $N$. Again because
the holonomy of $\Lambda$ is trivial,  classical holonomy
extension arguments allow us to extend $f_0$ as a first integral to
$\fa$ in a neighborhood $W$ of $\Lambda$, constructed as the
saturated of $U(N)$.
\end{proof}

\section{Basins of Centers}\label{Section:Basins}

In this section  we look at the topology of the foliation near a
center-type component.

\subsection{Stability of foliations with center-type singularities}

We recall first  the main results of \cite {Se-Sc} that we use in the sequel.

\begin{Definition} {\rm Let $\fa$ be a possibly singular foliation on $M$. A
subset $B \subset M$ invariant by $\fa$ is {\it stable\/} (for
$\fa$) if for any given neighborhood $W$ of $B$ in $M$ there
exists a neighborhood $W' \subset W$ of $B$  such that every leaf
of $\fa$ intersecting $W'$ is contained in $W$. }
\end{Definition}

The following is essentially Proposition 2.7 in \cite{Se-Sc}:

\begin{Proposition}
\label{Proposition:Stabilitycharacterization}   Let $\fa$ be a
  Bott-Morse foliation on $M$. Given a
 compact component $N \subset \sing(\fa)$ we have:
\begin{enumerate}

\item  If $N$ is of center type and it is a limit of compact
leaves then $N$ is stable.

\item  If $\fa$ is compact in a neighborhood of $N$, then $N$ is
stable of center type with trivial holonomy.

\item  If $N$ is of center type and the  holonomy group of  $N$ is
finite then $N$ is stable and the nearby leaves are all compact.

\end{enumerate}

\end{Proposition}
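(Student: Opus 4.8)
The plan is to prove the three assertions in sequence, each building on the previous one, and using the local stability results recalled just above together with the triviality-of-holonomy Theorem~\ref{Theorem:localproduct}. For part (1), suppose $N$ is of center type and is a limit of compact leaves $L_n \to N$. Take a distinguished neighborhood $U$ of a point $p \in N$ of product type $D_0 \times V$, where $\fa|_U$ is given by a Bott-Morse function $f(x,y)=g(y)$ with $g$ having a center at the origin. In such a neighborhood the level sets of $g$ near $0$ are small $(m-n-1)$-spheres, so the leaves of $\fa$ are locally sphere bundles over $N$ that shrink toward $N$ as the transverse value tends to the critical value. Given any neighborhood $W$ of $N$, I would cover $N$ by finitely many such product charts, choose in each a small transverse value so that the corresponding ``tube'' of leaves lies in $W$, and take $W'$ to be the union of the innermost such tubes. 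The key point is that because $N$ is a limit of compact leaves and the holonomy of $N$ is trivial (Theorem~\ref{Theorem:localproduct}, applicable since $\fa$ is closed and $N$ is a singular component), a leaf meeting $W'$ near one chart is forced — by the holonomy extension / matching of transverse coordinates along $N$ — to stay inside the tube globally; hence it is contained in $W$. This is exactly the stability condition. In fact this is Proposition~2.7 of \cite{Se-Sc}, so one may also simply invoke that.

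For part (2), assume $\fa$ is compact in a neighborhood of $N$, i.e.\ every leaf of $\fa$ near $N$ is compact. Then in particular $N$ is a limit of compact leaves, so part (1) already gives stability. That $N$ is of center type: if $N$ were a saddle it would carry separatrices, which are non-compact leaves accumulating on $N$, contradicting compactness of nearby leaves; hence the transverse Morse index is $0$ or $m-n$, i.e.\ $N$ is a center. Triviality of the holonomy is immediate from Theorem~\ref{Theorem:localproduct}(1), since $N$ is a component of $\sing(\fa)$ and $\fa$ is closed; alternatively, with all nearby leaves compact, every holonomy map lies in $\Diff^\infty(\re,0)$ with all orbits finite, and being orientation preserving it is the identity (as in \cite{Camacho-Lins Neto}, Lemma~5, p.~72).

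For part (3), assume $N$ is of center type with finite holonomy group. By Theorem~\ref{Theorem:localproduct}(1) the holonomy of $N$ is actually trivial. Then Theorem~\ref{Theorem:localproduct}(2) (equivalently Lemma~2.5 / Theorem~B of \cite{Se-Sc}) provides a fundamental system of invariant neighborhoods $W_\alpha$ of $N$ on which $\fa$ is given by a Bott-Morse function $f_\alpha\colon W_\alpha \to \re$; since $N$ is of center type, on each $W_\alpha$ the leaves are the level sets of $f_\alpha$ away from its critical set $N$, so they are compact — indeed $W_\alpha$ is a $(m-n)$-disc bundle over $N$ fibered by these level spheres. Compactness of the nearby leaves gives, by part (1), that $N$ is stable. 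The main obstacle in the whole argument is the passage from the local product picture in a single chart to a global statement along all of $N$: one must know that the holonomy around $N$ does not cause a leaf to escape the tube after going once around $N$, and this is precisely what the triviality of holonomy (Theorem~\ref{Theorem:localproduct}) secures; everything else is a routine covering-and-compactness argument combined with the transverse sphere model of a center.
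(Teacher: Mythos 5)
The paper offers no proof of this proposition at all: it is introduced with ``The following is essentially Proposition 2.7 in \cite{Se-Sc}'', so your write-up has to stand on its own, and as written it has one genuine gap. Theorem~\ref{Theorem:localproduct} is stated and proved only for \emph{closed} Bott-Morse foliations, whereas Proposition~\ref{Proposition:Stabilitycharacterization} assumes only a Bott-Morse foliation (the closedness hypothesis is deliberately absent here -- compare with Theorem~\ref{Theorem:LocalStability}, and note that Remark~\ref{Remark:analytic} later uses exactly this kind of statement to \emph{deduce} closedness). In part (1) you write ``applicable since $\fa$ is closed'', which is not a hypothesis, and the entire mechanism by which a leaf meeting $W'$ is confined to $W$ rests on the triviality of the holonomy of $N$. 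That triviality does not follow from ``$N$ is a limit of compact leaves'': such a germ can fix a sequence $t_k\to 0$ without being the identity. What the hypothesis does give is that the holonomy group of $N$, acting monotonically on the transverse radial parameter $t=g(y)$, admits arbitrarily small finite invariant sets (the traces $L_k\cap\Sigma_p$), hence common fixed points $t_k\to 0$; the corresponding levels -- equivalently, the compact leaves $L_k$ together with $N$ -- bound saturated regions $R_k$ shrinking to $N$, and $W'=\mathrm{int}(R_k)$ for $R_k\subset W$ gives stability because no leaf can cross the barrier $L_k$. That fixed-point/barrier argument, not trivial holonomy, is the missing idea in your part (1).

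Parts (2) and (3) are essentially correct, because in each case you also supply the elementary holonomy argument that does not need closedness: if all nearby leaves are compact every holonomy orbit is finite and an orientation-preserving germ of $(\re,0)$ with finite orbits is the identity, and likewise a finite-order orientation-preserving germ is the identity, so a finite holonomy group is trivial; from trivial holonomy, Lemma 2.5 of \cite{Se-Sc} (rather than the closed-foliation Theorem~\ref{Theorem:localproduct}) gives the fibered invariant neighborhoods, hence compact nearby leaves and stability. Two small repairs there: in part (2) you should establish that $N$ is of center type (via the separatrix argument you give) \emph{before} invoking part (1), since part (1) assumes center type; and throughout, replace the citations of Theorem~\ref{Theorem:localproduct} by the direct germ arguments or by Lemma 2.5 of \cite{Se-Sc}.
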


 One has the following  the Local Stability Theorem in
\cite{Se-Sc} (Theorem B):

\begin{Theorem}
\label{Theorem:LocalStability}  Let $\fa$ be  a closed
Bott-Morse foliation on $M^m$ and let $N^n \subset \sing(\fa)$ be
   a center type component. Then $N$ is
stable and there is a fundamental system of invariant compact
neighborhoods $\{W_\nu\}$ of $N$ such that every leaf in $W_\nu$
is compact, with trivial holonomy and diffeomorphic to the unit
sphere normal bundle of $N$ in $M$. Hence every such leaf is an  $(m-n-1)$-sphere
bundle over $N$.
\end{Theorem}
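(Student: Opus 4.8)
The plan is to leverage Theorem~\ref{Theorem:localproduct} (triviality of holonomy and existence of a Bott-Morse first integral near $N$) together with Lemma~2.5 and Theorem~B of \cite{Se-Sc}, and then to upgrade the local-product picture to a global fiber-bundle statement. First I would invoke Proposition~\ref{Proposition:Stabilitycharacterization}: since $\fa$ is closed and $N$ is of center type, one checks that the nearby leaves are compact (a leaf close to $N$ but not contained in it cannot contain a separatrix, because $N$ is not a saddle and there are no saddle-connections, hence it is compact off $\sing(\fa)$ and therefore compact), so $N$ is a limit of compact leaves and is stable with trivial, hence finite, holonomy.

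Next I would produce the neighborhoods $W_\nu$. By Theorem~\ref{Theorem:localproduct}, part (2), applied to $\Lambda = N$, there is a fundamental system of invariant neighborhoods on each of which $\fa$ is given by a Bott-Morse function $f_\nu$; since $N$ is of center type, $f_\nu$ has $N$ as a non-degenerate minimum (or maximum) along the normal directions, i.e. the transverse type is $\sum \lambda_j y_j^2$ with all $\lambda_j$ of the same sign. Shrinking, I may take $W_\nu = f_\nu^{-1}([0,\varepsilon_\nu])$, a compact invariant neighborhood. The Bott-Morse normal form at $N$ gives, after the Morse-Bott lemma with parameters (this is exactly Lemma~2.5 of \cite{Se-Sc}), a tubular-neighborhood identification of $W_\nu$ with a disc bundle in the normal bundle $\nu(N,M)$ on which $f_\nu$ is the fiberwise squared norm for a suitable metric; the leaves $f_\nu^{-1}(t)$, $t \in (0,\varepsilon_\nu]$, are then precisely the sphere subbundles of radius $\sqrt{t}$, each diffeomorphic to the unit normal sphere bundle of $N$ in $M$, an $(m-n-1)$-sphere bundle over $N$. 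Triviality of the holonomy of each such leaf follows from part (1) of Theorem~\ref{Theorem:localproduct} (each leaf in $W_\nu$ is a compact leaf of the restriction of $\fa$ to $M\setminus\sing(\fa)$, hence its holonomy is trivial), or alternatively from the fact that it bounds a region fibering over $N$.

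The main obstacle is the passage from the merely \emph{local} product/normal-form statement to the assertion that \emph{every} leaf meeting $W_\nu$ is an $(m-n-1)$-sphere bundle over \emph{all} of $N$ — i.e. gluing the charts $U_j$ used in the definition of the holonomy of $N$ into a single bundle over $N$. This is where triviality of the holonomy of $N$ is essential: the transition maps $\varphi_j$ between consecutive transverse discs are the identity (Theorem~\ref{Theorem:localproduct}(1)), so the locally defined first integrals $F_j$ patch to a globally defined Bott-Morse function $f_\nu$ on a full saturated neighborhood $W_\nu$ of $N$, exactly as in the last paragraph of the proof of Theorem~\ref{Theorem:localproduct}; one then runs the Morse-Bott normal form globally along $N$ using this $f_\nu$. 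I would also remark that the $\{W_\nu\}$ form a fundamental system because $\bigcap_\nu W_\nu = N$ by construction, and that they are invariant since each is a union of level sets of $f_\nu$. This completes the proof, the whole argument being a bundle-theoretic repackaging of Theorem~B of \cite{Se-Sc} made possible by the holonomy-triviality established in Theorem~\ref{Theorem:localproduct}.
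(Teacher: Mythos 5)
First, a point of comparison: the paper does not prove Theorem~\ref{Theorem:LocalStability} at all --- it is quoted verbatim as Theorem~B of \cite{Se-Sc}, so there is no in-paper proof to measure your argument against. Your reconstruction (trivial holonomy of $N$ from Theorem~\ref{Theorem:localproduct}(1), the Morse--Bott normal form with parameters from Lemma~2.5 of \cite{Se-Sc}, patching of local first integrals via the trivial holonomy, and identification of the level sets with the normal sphere bundles) is the expected architecture and is essentially the route taken in \cite{Se-Sc}. One caution: you invoke Theorem~\ref{Theorem:localproduct}(2) for $\Lambda=N$ a center, but the paper's proof of that very case consists of citing ``Theorem~B (actually Lemma~2.5) in \cite{Se-Sc}'' --- i.e.\ the statement you are proving. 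Your proof only avoids circularity because you ultimately rest the construction on Lemma~2.5 (the normal form) rather than on Theorem~B itself; it is worth making that dependence explicit rather than routing through Theorem~\ref{Theorem:localproduct}(2).

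The one step that does not hold up as written is the opening compactness claim: ``a leaf close to $N$ but not contained in it cannot contain a separatrix, because $N$ is not a saddle and there are no saddle-connections.'' The no-saddle-connection hypothesis only forbids a leaf from being a separatrix of \emph{two distinct} saddle components; it does not prevent a leaf passing arbitrarily close to the center $N$ from being the separatrix of a single saddle located elsewhere in $M$, nor does it a priori prevent such a non-compact leaf from accumulating on $N$ (for a closed foliation this is consistent with the leaf being closed off $\sing(\fa)$). What actually excludes this is precisely the trivial-holonomy/saturation argument you give later, so using ``all nearby leaves are compact'' as an input to Proposition~\ref{Proposition:Stabilitycharacterization}(1) before that point is circular. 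The fix is easy and worth stating: a closed Bott--Morse foliation has only finitely many non-compact leaves (each is a separatrix of one of the finitely many saddle components, each of which has at most two separatrices), so $N$ is automatically a limit of compact leaves and Proposition~\ref{Proposition:Stabilitycharacterization}(1) applies; alternatively, skip stability at the outset entirely and derive it at the end from the existence of the compact saturated neighborhoods $W_\nu=f_\nu^{-1}([0,\varepsilon_\nu])$, on which every leaf is a component of a level set and hence compact. With that repair the argument is sound.
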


And one also has the  corresponding  Complete Stability Theorem
(Theorem A in \cite{Se-Sc}):

\begin{Theorem}\label{Theorem:CompleteStability}
  {\sl Let $\fa$
be a smooth foliation with Bott-Morse singularities on a closed
oriented manifold $M$ of dimension $m \ge 3$ having only center
type components in $\sing(\fa)$. Assume that  $\fa$ has some
compact leaf $L_o$ with finite fundamental group, or there is a
codimension $\geq 3$ component $N$ with finite fundamental group.
Then  all leaves of $\fa$ are compact, stable, with finite
fundamental group. If, moreover, $\fa$ is transversally
orientable, then $\sing(\fa)$ has exactly two components and there
is a   differentiable Bott-Morse function $f\colon M \to [0,1]$
whose critical values are $\{0,1\}$ and such that
$f\big|_{M\setminus\sing(\fa)}\colon {M\setminus\sing(\fa)} \to
(0,1)$ is a fiber bundle with fibers the leaves of $\fa$.}
\end{Theorem}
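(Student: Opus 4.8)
The plan is to carry the proof of Reeb's Complete Stability Theorem into the present singular setting, using the local stability of centers (Theorem~\ref{Theorem:LocalStability}) and the triviality of holonomy (Theorem~\ref{Theorem:localproduct}) in place of the classical local tools. First I would reduce both forms of the hypothesis to the existence of a compact leaf with trivial holonomy. If the second form holds, pick a center $N$ of dimension $n$ with $\codim N=m-n\ge 3$ and $\pi_1(N)$ finite; by Theorem~\ref{Theorem:LocalStability} the leaves of $\fa$ in a small invariant neighborhood of $N$ are $(m-n-1)$-sphere bundles over $N$, and since $m-n-1\ge 2$ the fibre is simply connected, so the homotopy sequence of such a bundle gives $\pi_1(L)\cong\pi_1(N)$ for such a leaf $L$: a compact leaf with finite fundamental group. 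Hence in either case there is a compact leaf $L_o$ with $\pi_1(L_o)$ finite. Its holonomy group is a finite quotient of $\pi_1(L_o)$, so by Proposition~\ref{Proposition:reebstability}(iii) it is trivial and $L_o$ has arbitrarily small product neighborhoods $L_o\times(-1,1)$ foliated by compact, trivial-holonomy leaves diffeomorphic to $L_o$.

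Next I would run a connectedness argument in the spirit of Reeb. Because $\fa$ is closed and has no saddles, every leaf off $\sing(\fa)$ is compact. Let $\Omega\subset M$ be the union of $\Cent(\fa)$ with all compact leaves of $\fa$ having trivial holonomy; it is nonempty since $L_o\in\Omega$. I would verify that $\Omega$ is open --- near a regular leaf this is its product neighborhood, near a center it is Theorem~\ref{Theorem:LocalStability}, which furnishes a neighborhood foliated by compact trivial-holonomy leaves --- and that $\Omega$ is closed: a leaf $\Lambda$ in the closure of $M\setminus\Omega$ is either regular, hence a compact leaf which is a limit of trivial-holonomy compact leaves and therefore has trivial holonomy by Proposition~\ref{Proposition:reebstability}(ii), or else contained in $\sing(\fa)$, hence a center component since there are no saddles; in both cases $\Lambda\subset\Omega$, so $M\setminus\Omega$ is closed. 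Connectedness of $M$ then gives $\Omega=M$: every non-singular leaf is compact with trivial holonomy, hence stable via its product neighborhood (cf. Proposition~\ref{Proposition:Stabilitycharacterization}), and all these leaves are mutually diffeomorphic, the property ``diffeomorphic to $L_o$'' propagating along product neighborhoods and center neighborhoods exactly as trivial holonomy does.

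Then I would identify the leaf space and build $f$. The quotient $Q=M/\fa$ is Hausdorff (two distinct leaves are separated by disjoint saturated product neighborhoods), locally an open interval near each regular leaf and a half-open interval near each center (the transverse model being the family of levels of $\sum y_j^2$, globalized by Theorem~\ref{Theorem:LocalStability}); thus $Q$ is a compact connected $1$-manifold with $\partial Q=\Cent(\fa)$. Since $\sing(\fa)\ne\emptyset$ we get $Q\cong[0,1]$, so $\sing(\fa)=\Cent(\fa)$ has exactly two components $N_0\leftrightarrow 0$ and $N_1\leftrightarrow 1$. As all holonomy is trivial and $\fa$ is transversally orientable, the quotient map $M\setminus\sing(\fa)\to$ (interior of $Q$, which is diffeomorphic to $(0,1)$) is a smooth submersion defining $\fa$; replacing it near $N_0$ and $N_1$ by the local Bott-Morse functions of Theorem~\ref{Theorem:localproduct}(2), reparametrized by orientation-preserving interval diffeomorphisms so as to agree on the overlaps, yields a smooth $f\colon M\to[0,1]$ defining $\fa$ with $f^{-1}(\{0,1\})=\sing(\fa)$; its only critical points lie on the Bott-Morse locus $N_0\cup N_1$, so its critical values are exactly $\{0,1\}$. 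Finally $f|_{M\setminus\sing(\fa)}\colon M\setminus\sing(\fa)\to(0,1)$ is a submersion with compact fibres (the leaves) and is proper, since the $f$-value of any sequence approaching $\sing(\fa)$ tends to $0$ or $1$; by Ehresmann's fibration theorem it is a locally trivial fibre bundle, and its fibres, being diffeomorphic to $L_o$, have finite fundamental group. Stability of $N_0$ and $N_1$ is already part of Theorem~\ref{Theorem:LocalStability}.

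I expect the main obstacle to be the closedness half of the open-and-closed argument --- the statement that trivial holonomy does not degenerate in a limit of compact leaves --- which is precisely where transverse orientability is indispensable and which is encapsulated in Proposition~\ref{Proposition:reebstability}(ii); making this work at the center components, via the local stability Theorem~\ref{Theorem:LocalStability}, is the genuinely new input relative to the classical theorem. The one fiddly point afterwards is the gluing of the regular-part submersion with the two local Bott-Morse models into a single global $f$, but this is routine once $Q$ has been recognized as $[0,1]$.
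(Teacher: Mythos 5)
A preliminary remark: the paper does not actually prove this theorem here --- it is quoted as Theorem A of \cite{Se-Sc} --- so there is no in-paper argument to compare yours against; I can only judge the proposal on its own terms. It has one genuine gap, and it sits exactly at the heart of the theorem. In the sentence ``Because $\fa$ is closed and has no saddles, every leaf off $\sing(\fa)$ is compact'' you assume that $\fa$ is a \emph{closed} foliation. That is not a hypothesis of Theorem~\ref{Theorem:CompleteStability}: the hypotheses are only that $\sing(\fa)$ consists of centers and that there is one compact leaf with finite fundamental group (or a codimension $\geq 3$ center with finite $\pi_1$), and the assertion that \emph{all} leaves are compact is precisely what has to be proved. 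The example in Section~\ref{Section:examples} in which the interior of a Reeb component is refoliated by concentric tori shows that ``only center-type components'' does not by itself force closedness, and Remark~\ref{Remark:analytic} shows that even deducing closedness from the existence of a compact leaf with finite holonomy requires extra input (there, real analyticity). Moreover, every auxiliary result you invoke --- Proposition~\ref{Proposition:reebstability}, Proposition~\ref{Proposition:Stabilitycharacterization}, Theorem~\ref{Theorem:LocalStability}, Theorem~\ref{Theorem:localproduct} --- is stated for closed foliations, so the open-and-closed argument as written is circular at its core.

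What is missing is the hard half of Reeb's complete stability argument. Granting that the saturated set $\Omega$ of compact leaves with finite fundamental group is open (local stability: finite holonomy of an orientation-preserving germ is trivial, giving a product neighborhood), one must show that a leaf in $\partial\Omega$ which is not a center component is itself compact with finite $\pi_1$; a priori it could be non-compact, or compact with infinite holonomy, with the nearby compact leaves merely accumulating on it. Controlling the holonomy and the topology of such a limit leaf by means of the nearby compact leaves (as in the classical proofs in \cite{Go} and \cite{Camacho-Lins Neto}), together with Proposition~\ref{Proposition:Stabilitycharacterization}(1) at the center components, is the genuine content, and your sketch replaces it by the unavailable hypothesis of closedness. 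The rest of the proposal is sound and matches the natural strategy: the reduction of the second hypothesis to the first via the homotopy sequence of the $(m-n-1)$-sphere bundle over a codimension $\geq 3$ center is correct, and the endgame --- identifying the leaf space with $[0,1]$, hence exactly two center components, and assembling $f$ from the quotient map on the regular part and the two local Bott-Morse models --- is routine once compactness and triviality of holonomy of all leaves are in hand.
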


This theorem and its proof   lead to the following
generalization of \cite[Theorem 1.5]{LSV}, which provides a
complete topological characterization of foliated manifolds admitting compact
Bott-Morse foliations:

\begin{Theorem}[\cite{Se-Sc} Theorem C, page 191]\label{Theorem C}
Let $\fa$ be a  transversally oriented,
compact foliation  with Bott-Morse singularities on a closed,
oriented, connected manifold $M^m$, $m \ge 3$, with non-empty
singular set $\sing(\fa)$. Let $L$ be any leaf of  $\fa$. Then
$\sing(\fa)$ has two connected components \,$N_1, N_2$, both of
center type, and one has:
\begin{itemize}

\item[{\rm (i)}]  $M \setminus (N_1 \cup N_2)$ is diffeomorphic to
the cylinder $L \times (0,1)$.

\item[{\rm (ii)}] $L$ is a sphere fiber bundle over both manifolds
$N_1, N_2$ and $M$ is diffeomorphic to the union of the
corresponding disc bundles  over $N_1, N_2$, glued together along
their common boundary $L$ by some diffeomorphism $L \to L$.

\item[{\rm (iii)}] In fact one has a double-fibration
$$N_1 \buildrel{\pi_1}\over{\longleftarrow} L
\buildrel{\pi_1}\over{\longrightarrow} N_2\;,$$
and $M$ is homeomorphic to the corresponding double mapping cylinder,
{\it i.e.}, to the quotient space of $(L \times [0,1]) \bigcup
(N_1 \cup N_2) $ by the identifications $(x,0) \sim \pi_1(x)$ and
 $(x,1) \sim \pi_2(x)$.
\end{itemize}
\end{Theorem}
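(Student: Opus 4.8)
The plan is to derive this as a compact, saddle-free streamlining of the argument behind Theorem~\ref{Theorem:CompleteStability}, using only the stability results already established. Since $\fa$ is compact, every leaf is compact, $s(\fa)=0$, and hence every component of $\sing(\fa)$ is of center type; moreover $\fa$ is then a closed Bott-Morse foliation, so Theorems~\ref{Theorem:localproduct} and~\ref{Theorem:LocalStability} apply. Writing $\fa_0$ for the restriction of $\fa$ to $M_0:=M\setminus\sing(\fa)$, Theorem~\ref{Theorem:localproduct} gives that every leaf of $\fa_0$ has trivial holonomy and a saturated product neighborhood diffeomorphic to $L'\times(-1,1)$, while Theorem~\ref{Theorem:LocalStability} gives that every center $N\subset\sing(\fa)$ of dimension $n$ has a fundamental system of invariant compact neighborhoods $W(N)$, each diffeomorphic to the closed unit normal disc bundle of $N$ in $M$ and foliated by the concentric unit normal sphere bundles, which are $(m-n-1)$-sphere bundles over $N$.

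Next I would pass to leaf spaces. As $\sing(\fa)$ has codimension $\ge 2$, $M_0$ is connected; by the product neighborhoods above the leaf space $B:=M_0/\fa_0$ is a connected Hausdorff $1$-manifold and the quotient $q\colon M_0\to B$ is a locally trivial fibration, so all leaves are diffeomorphic to a single one, $L$. Adjoining the centers and using the local disc-bundle models near them, the full leaf space $\widehat B:=M/\fa$ becomes a compact, connected, Hausdorff $1$-manifold with boundary, whose boundary points are exactly the images of the center components of $\sing(\fa)$ and whose interior is $B$. Since $\sing(\fa)\neq\emptyset$ we have $\partial\widehat B\neq\emptyset$, hence $\widehat B\cong[0,1]$; this has precisely two boundary points, so $\sing(\fa)=N_1\cup N_2$ with $N_1,N_2$ of center type, and $B\cong(0,1)$. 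A locally trivial fibration over an interval is trivial, so $M_0\cong L\times(0,1)$, which gives (i) together with the count of singular components.

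For (ii) and (iii) I would then invoke Theorem~\ref{Theorem:LocalStability} once more: small invariant neighborhoods $W_1:=W(N_1)$ and $W_2:=W(N_2)$ are closed unit normal disc bundles over $N_1,N_2$, and the leaf $L$ occurs in each of them, exhibiting $L$ simultaneously as an $(m-n_1-1)$-sphere bundle $\pi_1\colon L\to N_1$ and an $(m-n_2-1)$-sphere bundle $\pi_2\colon L\to N_2$, with $\partial W_1=L=\partial W_2$. By the cylinder structure of (i), the complement $M\setminus(\operatorname{int}W_1\cup\operatorname{int}W_2)$ is a compact subcylinder diffeomorphic to $L\times[0,1]$, which can be absorbed into the collar of $\partial W_1$; hence $M$ is diffeomorphic to $W_1$ and $W_2$ glued along their common boundary $L$ by a diffeomorphism $\phi\colon L\to L$, proving (ii). Finally, the closed unit disc bundle of a vector bundle is canonically the mapping cylinder of its unit sphere bundle, so $W_j$ is the mapping cylinder of $\pi_j$; absorbing $\phi$ into $\pi_2$ (i.e.\ replacing $\pi_2$ by the sphere-bundle projection $\pi_2\circ\phi^{-1}$) identifies $M$ with the double mapping cylinder of the diagram $N_1\leftarrow L\rightarrow N_2$, which is (iii).

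I expect the main obstacle to be the leaf-space bookkeeping in the middle step: verifying that $\widehat B$ is genuinely a compact $1$-manifold with boundary (Hausdorffness of the leaf space, and that the center points are boundary rather than interior points, which is exactly where the local disc-bundle model is needed), and then carefully matching the global cylinder $L\times(0,1)$ with the union of the two normal disc bundles so that the gluing and the fibrations $\pi_1,\pi_2$ are as asserted. Everything else is the standard Reeb-stability machinery already used in proving Theorem~\ref{Theorem:CompleteStability}.
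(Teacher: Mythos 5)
Your proposal is correct, and it follows essentially the intended route: the paper itself states this theorem as an imported result from \cite{Se-Sc} (Theorem C there) without reproducing a proof, and the derivation it alludes to is exactly the combination of triviality of holonomy for compact leaves, the local disc-bundle model at center components (Theorem~\ref{Theorem:LocalStability}), and the resulting fibration of $M\setminus\sing(\fa)$ over an interval. Your two flagged delicate points are handled by tools already in the paper: Hausdorffness of the leaf space follows from Proposition~\ref{Proposition:reebstability}(i) (a sequence of compact leaves cannot accumulate on two disjoint compact leaves), and the identification of the center points as boundary points of the leaf space is precisely the disc-bundle local model.
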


This yields to an explicit description
 of this type of foliations on manifolds of dimensions $3$ (and
 $4$), that will be used later in this article:

\begin{Theorem}[\cite{Se-Sc}, Theorem D]\label{Theorem D}
   Let $M$ be a closed
oriented connected $3$-manifold equipped with a transversely
oriented compact   foliation $\fa$ with Bott-Morse singularities.
Then either $\sing(\fa)$ consists of two points, the leaves are
$2$-spheres and $M$ is $S^3$, or $\sing(\fa)$ consists of two
circles, the leaves are tori and $M$ is homeomorphic to the
$3$-sphere $S^3$, a Lens space or a product $S^2 \times S^1$.
\end{Theorem}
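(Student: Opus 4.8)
The plan is to invoke Theorem~\ref{Theorem C} and then work out which closed oriented 3-manifolds arise as the double mapping cylinders it produces. By Theorem~\ref{Theorem C}, $\sing(\fa)$ has exactly two components $N_1,N_2$, both of center type, and every leaf $L$ is simultaneously a sphere bundle over $N_1$ and over $N_2$, with $M$ the union of the associated disc bundles glued along $L$. In dimension $m=3$ the only possibilities for the pair $(\dim N_i, \text{fibre sphere})$ are: $\dim N_i = 0$ with fibre $S^{2}$, or $\dim N_i = 1$ with fibre $S^{1}$. Since $L$ fibres over \emph{both} $N_1$ and $N_2$, the dimensions of $N_1$ and $N_2$ must coincide (the fibre dimension is $2-\dim N_i$, and $L$ has a fixed dimension $2$), so there are exactly two cases to analyse.

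First, the case $\dim N_1 = \dim N_2 = 0$. Then $\sing(\fa)=\{p_1,p_2\}$ is two points, the generic leaf $L$ is $S^{2}$, and $M$ is the union of two $3$-discs (the disc bundles over points) glued along their boundary $S^{2}$ by a diffeomorphism $S^2 \to S^2$. Since every orientation-preserving diffeomorphism of $S^{2}$ is isotopic to the identity (and the manifold is oriented, so we may take the gluing orientation-preserving), $M \cong S^{3}$.

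Second, the case $\dim N_1 = \dim N_2 = 1$. Then each $N_i$ is an embedded circle, the generic leaf $L$ is an $S^{1}$-bundle over $S^{1}$; since $M$ is oriented and $L$ is transversally orientable (being a leaf of a transversely oriented foliation of an orientable manifold), $L$ is the orientable such bundle, i.e. the $2$-torus $T^2$. Thus $M$ is obtained by gluing two solid tori $D^2 \times S^1$ (the $D^2$-bundles over the circles $N_1, N_2$, which are orientable hence trivial) along their boundary $T^2$ by some diffeomorphism. Such manifolds --- Heegaard splittings of genus $1$ --- are, by the classical classification, exactly $S^{3}$, $S^{2}\times S^{1}$, and the Lens spaces $L(p,q)$ (with $S^3 = L(1,0)$ absorbed into the list, or listed separately). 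This completes the enumeration and hence the theorem.

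The main obstacle is not any single hard step but making sure the identification of the pieces is airtight: one must confirm that the disc bundles over $N_i$ really are the \emph{trivial} bundles $D^2 \times S^1$ (using orientability of $M$, which forces the normal bundle of each circle to be trivial), and that the sphere-bundle leaf is the trivial $T^2$ rather than the Klein bottle (again from transverse orientability plus orientability of $M$). Once the pieces are pinned down as standard solid tori resp. $3$-balls, the conclusion is just the classical genus-$\le 1$ Heegaard classification, which may be quoted. A minor bookkeeping point is to observe that the double-fibration structure forces $\dim N_1 = \dim N_2$, ruling out "mixed" gluings; this follows immediately from the fact that a fixed leaf $L$ fibres over both.
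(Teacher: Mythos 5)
Your proposal is correct and follows exactly the route the paper intends: Theorem~\ref{Theorem C} reduces the situation to a genus~$\le 1$ Heegaard splitting (two $3$-balls glued along $S^2$, or two solid tori glued along $T^2$, the orientability of $M$ trivializing the normal bundles of the circles and forcing the leaf to be $T^2$ rather than a Klein bottle), after which one quotes the classical classification of such splittings. One small repair: your parenthetical justification for excluding the mixed case $\dim N_1=0$, $\dim N_2=1$ is vacuous, since $\dim L=2$ holds in both cases; the correct reason is that the single leaf $L$ cannot be simultaneously a $2$-sphere and a circle bundle over a circle, the latter never being simply connected.
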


The $3$-manifolds that appear in this theorem are exactly those admitting a Heegard splitting of genus 1 (see for instance
\cite{Hempel}).

\subsection{Topology of the basin of a center}

The previous results, particularly Theorem
\ref{Theorem:LocalStability},  motivate the following definition,
which is   one of the main concepts in this article:

 \begin{Definition}[Basin of a center]
 \label{Definition:basin}
 {\rm Let $\fa$ be a foliation with Bott-Morse singularities on $M$. We define  the set $\C(\fa)\subset M$  as  the union  of all the
 centers of $\fa$ and all the compact leaves $L \in \fa$ which
 bound a  compact invariant region $R(L,N)$, neighborhood of
 some center type component
$N\subset \sing(\fa)$, of dimension $n \ge 0$,   with the following properties:

\medskip

\noindent 1) The region $R(L,N)$ is equivalent to a fibre bundle
with fibre the closed disc $\ov D^{m-n}$ over $N$, the fibers
being transversal to the leaves of $\fa$ in $R(L,N)$.

\smallskip
\noindent 2)  Each leaf $L\subset R(L,N)$  is a fibre bundle over
$N$ with fiber the sphere $S^{m-n-1}$.

\medskip \noindent Given a center type component $N\subset
\sing(\fa)$ the {\it basin} of $N$, denoted $\C(N)=\C(N,\fa)$, is
the connected component  of $\C(\fa)$ that contains the center
$N$.}
 \end{Definition}

\medskip

We have:


\begin{Theorem}\label{Lemma:basinopen}
 Let $\fa$ be a closed Bott-Morse foliation on $M$ and $N_1, N_2$ center
 type components of the singular set of $\fa$. Then the basins
 $\C(N_i,\fa)$ are open sets in $M$, either disjoint or identical, {\it i.e.}, $\C(N_1,\fa) \cap
\C(N_2,\fa) = \emptyset$ or $\C(N_1,\fa) = \C(N_2,\fa)$, and we
have:

\begin{itemize}
\item[{\rm 1.}]
  If the boundary $\partial \C(N_1,\fa)$ is
empty, then $\C(N_1,\fa) = M$. In this case the singular set of
$\fa$ consists of exactly two center type components, say $N_1,
N_2$ (there are no saddles), the foliation is compact, given by a
Bott-Morse function $f: M \to [0,1] \subset \mathbb R$, and each
leaf is diffeomorphic to the boundary of a tubular neighborhood of
both manifolds $N_1, N_2$, so it is a fibre bundle over both $N_1$
and $N_2$ with fibre a sphere of the appropriate dimension.

\item[{\rm 2.}] If $\partial\mathcal C(N_1,\fa) \ne
 \emptyset$,
 then $\C(N_1,\fa)$ is
  diffeomorphic to the total space of the normal bundle of $N_1$ in
  $M$, and there is exactly one saddle component $N_0$ of
$\sing(\fa)$ in $\partial \C (N,\fa)$. In this case   $\partial
\mathcal C(N,\fa)$ is the union of $N_0$ and separatrices of
$N_0$.

\item[\rm 3.] If $\C(N_1,\fa) \ne \C(N_2,\fa)$ and  $\partial
\C(N_1,\fa) = \partial \C(N_2,\fa)$, then $M = \ov{\C(N_1,\fa)}
\cup \ov{\C(N_2,\fa)}$.

\item[\rm 4.]
 If $\partial \C(N_1,\fa) \ne
\partial \C(N_2,\fa)$ and $\partial \C(N_1,\fa) \cap
\partial \C(N_2,\fa) \ne \emptyset$,
then there is a saddle component  $N_0 \subset \partial
\C(N_1,\fa) \cap
\partial \C(N_2,\fa)$  of Morse index $1$ or $m-n_o-1$, where $n_0$ is
the dimension of $N_0$.
\end{itemize}
\end{Theorem}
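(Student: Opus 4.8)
The plan is to deal with the four items in turn, founding everything on a single first integral defined on the whole basin. That the basins are disjoint or identical is immediate from Definition~\ref{Definition:basin}, since $\C(N_i,\fa)$ is by construction the connected component of $\C(\fa)$ through $N_i$. For openness I would use Theorem~\ref{Theorem:LocalStability}: a point $q\in\C(N_1,\fa)$ either lies in $N_1$, where an invariant neighborhood of sphere-bundle leaves bounding disc bundles over $N_1$ lies inside $\C(N_1,\fa)$, or lies on a compact leaf $L\in\C(N_1,\fa)$, which has trivial holonomy by Theorem~\ref{Theorem:localproduct}, so nearby leaves are diffeomorphic to $L$ and bound disc bundles over $N_1$ --- those on the $N_1$-side inside $R(L,N_1)$, those on the far side inside the disc bundle obtained by attaching a collar of $L$ to $R(L,N_1)$ --- hence lie in $\C(N_1,\fa)$ as well. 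Next I would build the first integral: by Theorem~\ref{Theorem:localproduct} applied to $\Lambda=N_1$ there is an invariant neighborhood of $N_1$ on which $\fa$ is given by a Bott-Morse function $f_0$ that near $N_1$ may be taken to be the squared normal distance, with concentric sphere-bundle level sets and disc-bundle sublevels; since all leaves in $\C(N_1,\fa)$ are compact with trivial holonomy (Theorems~\ref{Theorem:LocalStability} and \ref{Theorem:localproduct}), the holonomy-extension argument used in the proof of Theorem~\ref{Theorem:localproduct} extends $f_0$ to a single-valued first integral $f\colon\C(N_1,\fa)\to[0,b)$ with $f^{-1}(0)=N_1$, all positive levels sphere bundles and all sublevels disc bundles over $N_1$, and $\C(N_1,\fa)=f^{-1}([0,b))$. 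Its gradient flow identifies $\C(N_1,\fa)$ with the open normal disc bundle, i.e. with the total space of the normal bundle of $N_1$ in $M$; and the sets $\overline{f^{-1}([t,b))}$, $t\nearrow b$, form a nested family of connected compacta whose intersection is exactly $\partial\C(N_1,\fa)$, so this boundary is connected.

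For item~1: if $\partial\C(N_1,\fa)=\emptyset$, then $\C(N_1,\fa)$ is nonempty, open and closed in the connected manifold $M$, hence equals $M$; every leaf is then a compact sphere bundle over $N_1$, so $\fa$ is compact with $s(\fa)=0$, and the Complete Stability Theorem~\ref{Theorem:CompleteStability} (equivalently Theorem~\ref{Theorem C}) supplies the two center-type components $N_1,N_2$, the defining function $M\to[0,1]$ with critical values $\{0,1\}$, and the description of each leaf as the boundary of a tubular neighborhood of both $N_1$ and $N_2$.

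Item~2 is the heart of the argument, and where I expect the real work to be. Assuming $\partial\C(N_1,\fa)\ne\emptyset$, the diffeomorphism type of $\C(N_1,\fa)$ is as obtained above, so it remains to analyse the nonempty, closed, connected, $\fa$-invariant set $\Lambda_1:=\partial\C(N_1,\fa)$. I would show first that $\Lambda_1$ contains no compact leaf: a compact leaf $L\subset\Lambda_1$ has trivial holonomy by Theorem~\ref{Theorem:localproduct}, and being a limit of leaves of $\C(N_1,\fa)$ it is, by Reeb stability (Proposition~\ref{Proposition:reebstability}), diffeomorphic to them and bounds the disc bundle obtained as the closure of the increasing union of nested regions $R(L_n,N_1)$, so $L\in\C(N_1,\fa)$, a contradiction. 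Second, $\Lambda_1$ contains no center-type component $N'$: by Theorem~\ref{Theorem:LocalStability} a punctured neighborhood of $N'$ lies in $\C(N',\fa)$, which is disjoint from $\C(N_1,\fa)$, so $N'\notin\overline{\C(N_1,\fa)}$. Since $\fa$ is closed, every non-compact leaf is a separatrix whose closure adds only saddle components, and since $\fa$ has no saddle-connections each such leaf is a separatrix of a single saddle; hence every leaf contained in $\Lambda_1$ is a separatrix of a saddle component lying in $\Lambda_1$, and $\Lambda_1$ is a disjoint union of sets of the form $N^{(i)}\cup\{\hbox{separatrices of }N^{(i)}\}$ over the saddle components $N^{(i)}$ it contains. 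Connectedness of $\Lambda_1$ forces a single such piece, so there is exactly one saddle $N_0\subset\Lambda_1$ and $\Lambda_1=N_0\cup\{\hbox{some separatrices of }N_0\}$. The delicate part here is making the ``no compact leaf'' and ``connectedness'' steps fully rigorous by carefully tracking the nested disc bundles and the behavior of the leaves as $f\to b$; the rest is bookkeeping.

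Finally, items~3 and 4 follow quickly. Suppose $\C(N_1,\fa)\ne\C(N_2,\fa)$ and a component $N_0$ lies in both boundaries; by item~2, $N_0$ is the unique saddle in each, and $\partial\C(N_i,\fa)=N_0\cup\{\hbox{some separatrices of }N_0\}$ for $i=1,2$. If $\partial\C(N_1,\fa)\ne\partial\C(N_2,\fa)$, then $N_0$ has at least two distinct separatrices; reading off the normal form $J_{N_0}(x,y)=\sum_j\la_jy_j^2$, the separatrix set on a transverse slice is $\{\,y_1^2+\cdots+y_r^2=y_{r+1}^2+\cdots+y_{m-n_0}^2\ne0\,\}$, diffeomorphic to $S^{r-1}\times S^{m-n_0-r-1}\times(0,\infty)$, which is disconnected precisely when $r=1$ or $r=m-n_0-1$ --- this is item~4. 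If instead $\partial\C(N_1,\fa)=\partial\C(N_2,\fa)=:\Lambda$, then $\overline{\C(N_1,\fa)}\cup\overline{\C(N_2,\fa)}=\C(N_1,\fa)\cup\C(N_2,\fa)\cup\Lambda$ is closed, and it is also open: by Theorem~\ref{Theorem:localproduct} there is an invariant neighborhood $W$ of $\Lambda$ on which $\fa$ is a Bott-Morse function with critical level $\Lambda$, and every leaf of $W\setminus\Lambda$ is compact with trivial holonomy and lies on the $N_1$-side or the $N_2$-side of $\Lambda$, hence lies in $\C(N_1,\fa)$ or $\C(N_2,\fa)$ by the argument used for item~2; so $W\subset\C(N_1,\fa)\cup\C(N_2,\fa)\cup\Lambda$. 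Since $M$ is connected, this union is all of $M$, giving item~3.
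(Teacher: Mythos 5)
Your proposal is correct and follows the same overall architecture as the paper's proof (openness from stability, item 1 from connectedness plus Theorem \ref{Theorem C}, item 2 via nested tubular neighborhoods and the exclusion of compact leaves and centers from the boundary, item 3 from an open-and-closed argument, item 4 from the connectedness of the local separatrix cone when the index is not $1$ or $m-n_0-1$). The one place where you take a genuinely different route is the uniqueness of the saddle in $\partial\C(N_1,\fa)$: the paper argues by contradiction, taking two saddles $N, N_0$ with separatrices $\mathcal L\ne\mathcal L_0$, invoking Theorem \ref{Lemma:trivialholonomy} to get disjoint invariant product neighborhoods of $\Lambda=\mathcal L\cup N$ and $\Lambda_0=\mathcal L_0\cup N_0$, and deriving a contradiction from a common accumulating sequence of compact leaves; you instead decompose $\partial\C(N_1,\fa)$ into the pairwise disjoint closed sets $\Lambda(N^{(i)})$ (disjoint because there are no saddle-connections) and conclude by connectedness of the boundary. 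Your route is cleaner, but it leans on the connectedness of $\partial\C(N_1,\fa)$ — a fact the paper also uses (in the proof of its Claim) but never justifies; your nested-compacta argument via $\overline{f^{-1}([t,b))}$ supplies exactly this missing justification, so the extra machinery of the global first integral on the basin is not wasted. The only steps you gloss over are (a) that the holonomy-extension producing $f$ on all of the open, non-compact basin requires an exhaustion by the compact regions $R(L,N_1)$, and (b) in item 4, that each boundary $\partial\C(N_i,\fa)$ must actually contain a separatrix of $N_0$ (not just $N_0$ itself) before you can conclude that two distinct boundaries force two distinct separatrices; both are routine and the paper treats (b) as clear as well.
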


\begin{proof} First notice that Theorems \ref{Theorem:LocalStability} and
 \ref{Lemma:trivialholonomy}
 imply that the sets $\C(N_i,\fa)$ are
non-empty. That they are open sets in $M$, and either they are
disjoint or identical is immediate from the definition of the
basin and Reeb's Local Stability Theorem (see
Theorem~\ref{Theorem:localproduct}).

To prove the statement (1) notice  that if  the open
subset $\C(N_1,\fa)$ of $M$ has empty boundary then it is also closed
and therefore $\C(N_1,\fa) = M$, by connectedness.  Hence the
foliation is compact and there are no saddles. Then the rest of
statement (1) follows from Theorem \ref{Theorem C}.

Let us prove statement (2). For this we must show:

\begin{Claim} If $\partial \C(N_1,\fa) \ne
\emptyset$ then
 $\C(N_1,\fa)$
 is the union of $N_1$ and all the compact leaves that bound a
tubular neighborhood of $N_1$.
\end{Claim}

This obviously implies that $\C(N_1,\fa)$ is diffeomorphic to the
normal bundle of $N_1$ in $M$.

\begin{proof}[Proof of the claim]
For a leaf $L\subset \partial C (N_1,\fa)$ close enough to
$N_1$ it is clear that $L$ bounds a tubular neighborhood $R(L)$ of
$N_1$ in $M$. Applying the local stability theorem of Reeb to $L$
this same property holds for any leaf $L_1$ close enough to $L$.
Thus, by the connectedness of $\partial C(N_1,\fa)$ it follows that any
leaf in $\partial C(N_1,\fa)$ bounds a tubular neighborhood of
$N_1$.  The same argumentation actually shows that $
C(N_1,\fa)$ is the union of $N_1$ and all compact leaves that bound
a tubular neighborhood of $N_1$,  with the projection having as fibre a disc
 transverse to the
leaves.
\end{proof}

 The above  arguments prove the first claim in (2). Let us
prove now that  there is no compact
leaf in $\partial \C(N_1,\fa)$. If $L\subset \partial \C(N_1,\fa)$
is compact and  accumulated by a sequence of leaves $L_\nu\subset
\C(N_1,\fa), \, \nu \in \mathbb N$,
 then given $W$ as in Theorem~\ref{Theorem:localproduct}
we have $L_\nu\subset W$ for $\nu
>>1$ and since $\fa$ is of product type in $W$ we have $W\subset
\C(N_1,\fa)$ so that $L\not\subset \partial \C(N_1,\fa)$, a
contradiction. Therefore every leaf $L\subset \partial
\C(N_1,\fa)$ is separatrix of some saddle component. By
definition, if a center component $\tilde N$ is accumulated by
leaves in $\C(N_1,\fa)$ then $\tilde N$ is contained in
$\C(N_1,\fa)$. Hence there are no centers in $\partial \C(N,\fa)$.

 On the other hand, given a
saddle component $N_0\subset \partial \C(N_1,\fa)$, it is clear
that some separatrix of $N_0$ is contained in $\partial
\C(N_1,\fa)$. Let us prove that there is exactly one saddle
component in $\partial \C(N_1,\fa)$. If $N, N_0$ are different
saddles in $\partial \C((N_1,\fa)$ then there is a sequence of
compact leaves $L_\nu \subset \C((N_1,\fa)$, $\nu \in \mathbb N$,
which accumulate both $N$ and $N_0$ as $\nu \to \infty$. Hence
there exist separatrices ${\cal L}$ and ${\cal L}_0$ of $N$ and
$N_0$, respectively, which are accumulated by the $L_\nu$. Notice
that the sets $\Lambda= {\cal L} \cup N$ and $\Lambda_0 = {\cal
L}_0 \cup N_0$ are both compact and invariant, so by
Theorem~\ref{Lemma:trivialholonomy} they have trivial holonomy and
  each of these sets has a
 fundamental system $W_\nu$, $W_{\nu_0}$ of invariant neighborhoods.
  If ${\cal L} \ne {\cal L}_0$ then we have
$\Lambda\cap \Lambda_0=\emptyset$ and therefore $W_\nu \cap
W_{\nu_0}=\emptyset$ for $W_\nu$, $W_{\nu_0}$ small enough. On the
other hand we have $L_\nu \subset W_\nu$ and $L_\nu \subset
W_{\nu_0}$ for all $\nu,\nu_0  >>1$, a contradiction, since by
hypothesis there are no saddle connections. This proves (2).

For (3) we notice that if $\C(N_1,\fa) \ne \C(N_2,\fa)$ and
$\partial \C(N_1,\fa) = \partial \C(N_2,\fa)$, then $
\ov{\C(N_1,\fa)} \cup \ov{\C(N_2,\fa)}$ is open and obviously
closed in $M$, so the statement follows by connectedness.

Finally we prove (4). Suppose that $\partial \C(N_1,\fa) \ne
\partial \C(N_2,\fa)$ and $\partial \C(N_1,\fa) \cap
\partial \C(N_2,\fa) \ne \emptyset$.   By (ii)
there is a single saddle component $N_0\subset \po\C(N_1,\fa) \cap
\po\C(N_2,\fa)$.
 If the transverse Morse index of $N_0$ is different from $1$
and $m-n_0- 1$, then in  suitable local coordinates
$(x_1,...,x_{n_0}, y_1,\dots,y_{m-{n_0}})\in M$ we have $N_0 =
\{y_1=...=y_{m-{n_0}}=0\}$ and the union  $\Lambda(N_0)$ of $N_0$
and the local separatrix through $N_0$ is given by $y_1^2 +\cdots+
y_r^2 = y_{r+1}^2+\cdots+ y_{{m-n_0}}^2 $ where $r \notin \{1,
m-n_0-1\}$. Hence the local separatrix is connected. This implies
$N_0$ has only one separatrix and therefore
$\po\C(N_1,\fa) = \po\C(N_2,\fa)$, by (ii), which is a contradiction. Hence
the transverse Morse index of $N_0$ must be as stated  in (4).
\end{proof}

In the  situation envisaged in (2) and  (4) we  say that $N_0$ and
$N_1$ are {\it paired} (cf.
Definition~\ref{Definition:pairing}).

\medskip
Since every circle in an oriented manifold has trivial normal
bundle, Theorem~\ref{Lemma:basinopen} implies:

\begin{Corollary}\label{Lemma:basinopen-dim3} {\rm Let $\fa$ be a foliation with Bott-Morse singularities on $M$ and let $N_0\subset \sing(\fa)$ be a center type singularity. If $N_0$ is an isolated singularity, then the leaves around it are
$(m-1)$-spheres, and if $\partial C(N_0,\fa) \ne \emptyset$ then
the interior of $C(N_0,\fa)$ is an $m$-ball $D^{m}$. Similarly
if $N_0$ has dimension $1$ then every nearby leaf is
diffeomorphic to $S^1 \times S^{m-2}$ and if $\partial \C(N_0,\fa)
\ne \emptyset$ then the interior of $\C(N_0,\fa)$ is a product
$S^1 \times D^{m-1}$.}

\end{Corollary}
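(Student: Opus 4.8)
The plan is to read everything off Theorem~\ref{Theorem:LocalStability} and Theorem~\ref{Lemma:basinopen}(2), the only extra ingredient being the triviality of the normal bundle of $N_0$ in $M$ in the two low-dimensional cases at hand. Write $n_0=\dim N_0$ and let $\nu(N_0)$ denote the normal bundle of $N_0$ in $M$. By Theorem~\ref{Theorem:LocalStability} every leaf sufficiently close to $N_0$ is diffeomorphic to the unit sphere bundle of $\nu(N_0)$, hence is an $(m-n_0-1)$-sphere bundle over $N_0$; and if $\partial\C(N_0,\fa)\ne\emptyset$, then by Theorem~\ref{Lemma:basinopen}(2) the (open) basin $\C(N_0,\fa)$ is diffeomorphic to the total space of $\nu(N_0)$.

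If $N_0$ is isolated, then $n_0=0$ and $\nu(N_0)$ is a vector bundle over a point, hence $\nu(N_0)\cong\re^m$ with unit sphere bundle $S^{m-1}$; therefore the nearby leaves are $(m-1)$-spheres and, when $\partial\C(N_0,\fa)\ne\emptyset$, the basin $\C(N_0,\fa)$ is diffeomorphic to $\re^m$, i.e. to the open $m$-ball $D^m$. If $N_0$ has dimension $1$, then it is an embedded circle, and $\nu(N_0)$ is a rank-$(m-1)$ vector bundle over $S^1$ which is orientable (because $TM|_{N_0}$ is orientable, $M$ being oriented, and $TN_0$ is orientable); since every orientable vector bundle over $S^1$ is trivial, $\nu(N_0)\cong S^1\times\re^{m-1}$, with unit sphere bundle $S^1\times S^{m-2}$. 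Hence every nearby leaf is diffeomorphic to $S^1\times S^{m-2}$ and, when $\partial\C(N_0,\fa)\ne\emptyset$, the basin $\C(N_0,\fa)$ is $S^1\times\re^{m-1}$, that is, the product $S^1\times D^{m-1}$.

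The argument is essentially bookkeeping on top of the two cited theorems; the one step that is not purely formal is the triviality of the normal bundle of the circle, which is exactly the observation recorded just before the statement of the corollary, so no real obstacle remains.
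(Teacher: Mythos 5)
Your proposal is correct and follows the paper's own route exactly: the paper deduces the corollary from Theorem~\ref{Lemma:basinopen} (the basin is the total space of the normal bundle, the nearby leaves its sphere bundle) together with the observation, stated immediately before the corollary, that a circle embedded in an oriented manifold has trivial normal bundle. Your justification of that triviality via orientability of the rank-$(m-1)$ bundle over $S^1$ is a correct expansion of what the paper leaves implicit.
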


\begin{Remark}\label{Remark:analytic}
 {\rm The above Corollary~\ref{Lemma:basinopen-dim3} also shows
 that if $\fa$ is a (transversely) real analytic
Bott-Morse foliation on a connected closed oriented manifold $M^n, n
\ge 3$, such that $\fa$ has a compact leaf with finite holonomy then
$\fa$ is closed. This occurs, for instance, if $\sing(\fa)$ has an
isolated center or, more generally, some center component $N$ with
$|\pi_1(N)|<\infty$ and $\codim \geq 3$ (cf. \cite[Stability
theorem]{Se-Sc}).  To see this, let $L\in \fa$ be a compact leaf
with finite holonomy, denote by $\Omega$ the union of all compact
leaves of $\fa$ in $M$ and let $\Omega(L)\subset\Omega$ be its
connected component containing $L$. As we know already, if a leaf
$L_1\in \fa$ is such that $L_1$ does not accumulate on a component
of $\sing(\fa)$ and it is in the boundary of $\Omega(L)$, then $L_1$
is compact. The holonomy group  of $L_1$ is a subgroup of real
analytic diffeomorphisms $\Hol(L_1,\fa) < \Diff ^w (\re,0)$. Since
$L_1$ is a limit of
 compact leaves, the holonomy group $\Hol(L_1,\fa)$ has  finite
orbits arbitrarily close to the origin. This implies that
$\Hol(L_1,\fa)$ is finite and therefore trivial. Since $L_1$ is
compact and has trivial holonomy it follows from the local
stability theorem that we actually have $L_1\subset \Omega(L)$, a
contradiction. Thence every leaf in the boundary $\partial
\Omega(L)$ must accumulate on $\sing(\fa)$ and therefore it has to
be a separatrix of some saddle singularity. Also this leaf is
closed off $\sing(\fa)$. Thus, every   compact invariant subset
$\Lambda(N)\subset \partial \Omega(L)$, obtained as the union of a
saddle  $N $ and some of its separatrices, must have trivial
holonomy. Therefore all leaves near $\Lambda(N)$ are compact. This
shows that $\partial \Omega$ is a union of sets of the form
$\Lambda(N)$ as above. Since $M$ is connected we  conclude that
$M=\Omega\cup \partial \Omega$, which implies that $\fa$ is a
closed foliation.}
\end{Remark}

\section{Distinguished neighborhoods of saddles}
\label{Section:distinguishedneigh}
 We now look at the topology of the foliation near a
saddle-type component of the singular set. We discuss first the
isolated singularity case since this gives the local model. We
begin with a fast review of classical Morse theory. We consider a foliation $\fa$ with Bott-Morse
singularities on a manifold $M$.

 \vglue.1in

\subsection{Isolated saddle}
Let us assume that near a saddle point $p \in M$ the foliation is
given by the fibers of a Morse function $f \colon U \to \re$ with
Morse index $r$ at $x$. That is, we can choose local coordinates
where $x$ is identified with the origin $0 \in \re^m$ and $f$ is:

\begin{equation}\label{eq. Morse-saddle}
f(x_1,...,x_m) =
\sum\limits_{j=1}^r - x_j^2 + \sum\limits_{j=r+1}^m x_j^2\quad,\qquad  m > r
> 0\,.
\end{equation}
 Notice that the local separatrix of $\fa$ union the singular point $0$
is the hypersurface $V$ given locally by $f^{-1}(0)$:

$$ V \,=\, \big \{
\sum\limits_{j=1}^{r}  \,x_j^2 \,=  \sum\limits_{j=r+1}^{m} x_j^2
\big \} \,.$$ This is a cone, union of lines passing through the
origin. Hence its topology is determined by its {\it link} $K = V
\cap S_\epsilon$ where $S_\epsilon$ is a sphere around $0$ (see
\cite{Milnor3}), which may actually be taken to be the unit
sphere, that we denote $\mathbb S$ and we think of it as bounding
the unit ball $\mathbb D$. It is an exercise to show that $K$ is
diffeomorphic to the product $S^{r-1} \times S^{m-r-1}$ and
therefore the separatrix $V \setminus\{0\}$ is $S^{r-1} \times
S^{m-r-1} \times \re$ (in fact its intersection with $U$, assuming this set is small enough).

Notice that if $r$ is $1$ or $m-1$, and only in these cases,
 the link $K$ has two connected components, otherwise it is
 connected. This is because in these two cases $V$ actually has two
 ``branches'' (or components) that meet at $0$. This means that if $r=1,n-1$, then
 the saddle has two local separatrices. Thus in these
 cases, and only in these cases, an isolated saddle can have
 either one or two global separatrices, since both local
 separatrices can belong to the same global leaf.

The hypersurface $V$ splits $\re^m$ in two regions, corresponding to the points $x \in
U$ where $f(x)$ is positive or negative (for $r = 1, m-1$ one actually
has three regions, an ``external one'' and two ``internal regions''
bounded by the two components of $V$). Let us look at the fibers
$V_t = f^{-1}(t)$ of $f$ in these two cases.

We observe that $V$ meets the unit sphere  $\mathbb S$
transversally. Thus, essentially by Thom's transversality, for $t
\ne 0$ sufficiently small one has that $V_t$ also meets  $\mathbb
S$ transversally. Then the first Thom-Mather Isotopy Theorem
implies that  for $t \ne 0$ sufficiently small, $V$ and $V_t$ are
isotopic away from  $\mathbb D$, so they only differ inside the
ball; the $V_t's$ also differ only inside the ball. In fact, to
get $V_t$ all one has to do, up to diffeomorphism, is to take  the
piece of $V$ that is outside the interior of the unit ball, $V^* =
V \setminus \buildrel{\circ}\over{\mathbb
  D}$, which has boundary $K =S^{r-1} \times S^{m-r-1}$,  and attach to
it either $D^r  \times S^{m-r-1}$ to get the fibers $V_t$ for $t > 0$,
or $S^{r-1} \times D^{m-r}$ to get the fibers $V_t$ for $t < 0$.

In other words, if $t^+$ is positive and $t^-$ is negative, then
the fibers  $V_{t^+}$ and  $V_{t^-}$ are obtained from each other
by the classical Milnor-Wallace surgery (see \cite {Milnor1}),
{\it i.e.}, by removing $D^r  \times S^{m-r-1}$ from  $V_{t^+}$ to
get a manifold with boundary  $S^{r-1} \times S^{m-r-1}$, and then
attaching   $S^{r-1} \times D^{m-r}$ to get
 $V_{t^-}$, and viceversa.

\medskip
In the sequel we  need to consider  {\it distinguished
  neighborhoods}
of  saddle singularities. This means a set $W(N_0)$, homeomorphic to an $m$-ball, of the form:
$$W(N_0)\,=\,\{(x_1,...,x_m) \in  B_{\epsilon} \; \big | \, -\delta \, \leq\; f(x_1,...,x_m)
 \,\leq \delta\}\,,
$$
where $ B_{\epsilon} \subset M$ is diffeomorphic to a ball.
Thus  $W(N_0)$ is bounded by the leaves $f^{-1}(\pm \delta)$ and the
sphere  $S_{\epsilon} = \partial B_{\epsilon}$, with $\delta$
small enough with respect to $\epsilon$,  so that all the fibers $f^{-1}(t)$ with $|t| \le
\delta$ meet  $ \partial B_{\epsilon}$ transversally.

In other words,
 $W(N_0)$ can be regarded as a {\it Milnor tube} for $f$ at $N_0$ (see
 the last chapter of \cite {Milnor3}). Its boundary $\partial W(N_0)$
 is homeomorphic to an $(m-1)$-sphere and  consists of three
 ``pieces'': two leaves of $\fa$ and the {\it cap} denoted ${\cal C}$
 consisting of the points  $x \in S_{\delta}$ with $|f(x)| <
 \delta$:

\begin{equation}\label{decomposition disting. neigh}
\partial W(N_0) \, = \, [(f^{-1}(- \epsilon) \cup
f^{-1}(\epsilon)) \cap  B_{\delta}] \cup {\cal C}\,.
\end{equation}
At each point in ${\cal C}$ the corresponding leaf of
 $\fa$ is transversal to $\partial W(N_0)$.
Notice that we speak of three ``pieces'' in $\partial W(N_0)$,
each of which may have one or two connected components, depending
on the Morse index of the corresponding saddle.

 \vglue.1in

\subsection{Non-isolated saddle} Let  $N_0$ be now
  a  saddle singularity of  dimension $n_0>0$.
Given a point $p \in N_0$, define {\it a flow box} for $\fa$ at
$p$ to be a set of the form $W = \Sigma \times D^{n_0}$ where:

i) $\Sigma$ is an $(m-n_0)$-disc transversal to $N_0$ at $p$, with
a Morse foliation defined by the intersection of  $\Sigma$  with
the leaves of $\fa$,  singular at $p$; and

ii) restricted to $W$ the foliation $\fa$ is a product foliation.

\noindent We say that $W$ is {\it a distinguished flow box} if the
transversal $\Sigma$ is a distinguished  neighborhood of the
isolated saddle $p$.

\begin{Definition} {\rm A {\it distinguished neighborhood} of $N_0$ is a
  compact neighborhood  $W(N_0)$ of $N_0$, which is union of
 a finite collection $W_1,...,W_s$ of distinguished flow boxes for
 points in $N_0$, such that:
 \begin{enumerate}

 \item  The intersection of any two of them is
 either empty or a flow box.

 \item   $W(N_0)$ can be identified with the normal
bundle of $N_0$ and therefore $\fa|_{W(N_0)}$ has locally a
product structure with an isolated saddle singularity in each
normal fibre. Hence each normal fibre inherits a decomposition in
three pieces as in equation {\rm(\ref{decomposition disting.
neigh})}.

\item  The boundary of $W(N_0)$ is union of leaves of $\fa$ and a
fibre bundle $\widetilde {\cal C}$ over $N_0$ with fiber the cap
$\cal C$ in equation {\rm(\ref{decomposition disting. neigh})},
consisting of points where the foliation is transversal to
$\partial W(N_0)$.

 \end{enumerate} }
\end{Definition}


The existence of distinguished neighbourhoods  for closed
Bott-Morse foliations is granted by the local product
structure at each component of the singular set, the triviality of the holonomy, and the compactness of the singular set.

\begin{Proposition}\label{Lemma:productN_0}
 Let $\fa$ be a closed Bott-Morse foliation on a closed manifold $M$ and let $N_0$ be  saddle-type
component of the singular set.  Let $W(N_0)$ be a distinguished
neighborhood of $N_0$. Then for each leaf $L$ of $\fa$ that meets
$W(N_0)$, one has that the intersection $L \cap W(N_0)$ is a fiber
bundle over $N_0$ with fiber $L \cap \Sigma$, the trace of $L$ in
a transversal $\Sigma$.
\end{Proposition}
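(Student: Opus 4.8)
The plan is to realize $L\cap W(N_0)$ as a level set of a Bott--Morse first integral on $W(N_0)$ and then to read off the bundle structure from the clutching functions of the normal bundle of $N_0$. By Theorem~\ref{Lemma:trivialholonomy} the holonomy of $N_0$ is trivial, and this is exactly what makes the construction of a distinguished neighbourhood possible: $\fa|_{W(N_0)}$ is given by a Bott--Morse function $f\colon W(N_0)\to\re$ of product type along $N_0$ (cf.\ Lemma~2.5 of \cite{Se-Sc} together with the holonomy-extension argument in the proof of Theorem~\ref{Lemma:trivialholonomy}). Consequently, for each leaf $L$ meeting $W(N_0)$ one has $L\cap W(N_0)=f^{-1}(c)$ for a single value $c\in\re$, with $c=0$ corresponding to the separatrices of $N_0$ and $c\ne0$ to regular leaves; the key point to establish is that $\pi|_{f^{-1}(c)}\colon f^{-1}(c)\to N_0$, where $\pi$ is the normal-bundle projection, is a fibre bundle with fibre $L\cap\Sigma$.

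The second step is to normalize the fibre coordinates. Since $W(N_0)$ is, by definition, identified with the disc normal bundle $\pi\colon E\to N_0$, the parametrized Morse lemma (the Bott--Morse lemma) lets us choose a trivializing open cover $\{U_\alpha\}$ of $N_0$ and vector-bundle charts $E|_{U_\alpha}\cong U_\alpha\times D^{m-n_0}$ in which $f$ has the standard fibrewise form $f(p,y)=J_{N_0}(y)=\sum_{j=1}^{m-n_0}\lambda_j y_j^2$, with $\lambda_j\in\{\pm1\}$ and exactly $r$ of the $\lambda_j$ equal to $-1$, where $r$ is the transverse Morse index of $N_0$. On an overlap $U_\alpha\cap U_\beta$ the change of trivialization is a map $g_{\alpha\beta}\colon U_\alpha\cap U_\beta\to\Diff(D^{m-n_0},0)$; because $f$ is a globally defined function on $W(N_0)$ we have $J_{N_0}\circ g_{\alpha\beta}(p)=J_{N_0}$ for every $p$, so each $g_{\alpha\beta}(p)$ carries the level set $\{J_{N_0}=c\}$ onto itself. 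Hence $f^{-1}(c)$ is a fibre sub-bundle of $E$: over $U_\alpha$ it equals $U_\alpha\times\{J_{N_0}=c\}$, and the clutching functions of $E$ restrict to clutching functions for $f^{-1}(c)$ with fibre $\{J_{N_0}=c\}$. Finally, after the same normalization the trace $L\cap\Sigma$ of $L$ in any transversal $\Sigma$ to $N_0$ is exactly the level set $\{J_{N_0}=c\}$, which both identifies the fibre and shows its diffeomorphism type is independent of $\Sigma$; this completes the argument.

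The step I expect to cost the most work is the fibrewise normalization of $f$: producing vector-bundle charts in which $f$ is literally the standard quadratic form on every fibre, keeping track of the correct number of $\pm$ signs, and checking that the transition maps can be taken to respect this quadratic model. It is there — together with the triviality of the holonomy of $N_0$, which is what yields the global first integral $f$ and hence the single value $c$ around all of $N_0$ — that the hypotheses (closedness of $\fa$, the Bott--Morse condition, and Theorem~\ref{Lemma:trivialholonomy}) are genuinely used. Once $f$ is in fibrewise standard form, the remainder is the formal clutching-function computation above. One could equivalently avoid the global $f$ and argue directly by gluing the obvious product trivializations $L\cap W_i\cong(L\cap\Sigma_i)\times D_i$ over the flow boxes $W_i$ that compose $W(N_0)$, the gluing maps being the (trivial) holonomy transformations of $N_0$; this is the same proof rephrased.
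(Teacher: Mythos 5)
Your argument is correct in substance; note that the paper itself offers no proof at all here (it states ``The proof is obvious''), so what you have done is supply the argument the authors left implicit, and your route --- trivial holonomy of $N_0$ (Theorem~\ref{Lemma:trivialholonomy}) giving a global Bott--Morse first integral $f$ on $W(N_0)$, followed by a fibrewise Morse normalization so that the clutching functions of the normal bundle preserve the quadratic model $J_{N_0}$ and hence restrict to clutching functions for each level set --- is a perfectly reasonable way to make the ``obvious'' precise, and is essentially the gluing-of-flow-boxes argument you yourself mention at the end, which is the one implicit in the paper's definition of distinguished neighborhood.

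One imprecision should be corrected. You assert that $L\cap W(N_0)=f^{-1}(c)$ and, later, that $L\cap\Sigma$ ``is exactly the level set $\{J_{N_0}=c\}$.'' This is false in general: when the transverse Morse index is $1$ or $m-n_0-1$ the level set $\{J_{N_0}=c\}$ in a transversal has two connected components on one side of $c=0$, and these may belong to two \emph{distinct} global leaves (the paper discusses exactly this dichotomy for separatrices, and the same happens for nearby regular levels). So in general $L\cap W(N_0)$ is only a union of connected components of $f^{-1}(c)$, and $L\cap\Sigma$ is only a union of components of $\{J_{N_0}=c\}$. The conclusion survives, but you need one more line: a nonempty open-and-closed subset of the total space of a fibre bundle over a connected base is again a fibre bundle over that base (its image is open and closed in $N_0$, hence all of $N_0$, and local triviality restricts), with fibre the corresponding union of components, which is precisely $L\cap\Sigma$. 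With that repair, and the cosmetic remark that the fibres of $W(N_0)$ are Milnor tubes $\{|J_{N_0}|\le\delta\}\cap B_\epsilon$ rather than full discs (which changes nothing), your proof is complete.
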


 The proof is obvious.

\begin{Remark}\label{Rem:notptoducts} {\rm
It is not truth in general that the leaves $L \cap W(N_0)$ have a
global product structure, even if the normal bundle of $N_0$ is
trivial. For instance take a $2$-disc $D$ in $\re^2$ with a
saddle singularity at $0$; now take in $\re^3$ the product $D
\times [0, 2 \pi]$. As you move upwards from the level $D \times
\{0\}$ a time  $t$, rotate the disc by an angle $t$. Hence at
the level $D \times \{1\}$ the foliation on the disc is exactly as
in the level $D \times \{0\}$, so we can glue these two $2$-discs
and get a foliation on a solid torus $D \times S^1$ with trivial
holonomy;  no leaf has a global product structure. Nevertheless,
this is not surprising since already in the nonsingular case a
compact leaf with trivial holonomy or homotopy does not give a
global product structure for the foliation, but a fibre bundle
structure.}
\end{Remark}

\section{The Partial stability theorem}\label{sec. Partial Stability Theorem}

In this section we give one of the main results in this work, the
Stability Theorem~\ref{Theorem:partialstabilitytheorem}.
This is analogous to the classical Partial Stability Theorem of
Reeb for compact leaves, and to the Partial Stability Theorem
\ref{Theorem:LocalStability}
 for
center-type components of Bott-Morse foliations. Then we look at the
global topology of separatrices and  use the Stability Theorem to
describe the topology of the nearby leaves.

\subsection{The Partial stability theorem}

As in the classical case of non-singular foliations with trivial
 holonomy, using also the product structure of $\fa$
around $N_0$ we obtain:

\begin{Proposition}
\label{Proposition:trivialneighborhood} Let $\fa$ be a closed
Bott-Morse foliation on a compact manifold $M$, $N_0\subset
\sing(\fa)$ a saddle component  and  $\Lambda(N_0)$ be  the union
of $N_0$ and all its separatrices.  Consider a fundamental system
of compact invariant neighborhoods $\{W_\nu\}$ of $\Lambda(N_0)$
in $M$ as in Theorem~\ref{Lemma:trivialholonomy}, and a
distinguished neighborhood $U(N_0)$ of $N_0$. Then for each $\nu
>>1$ one has that   $W_\nu \setminus (W_\nu \cap U(N_0))$ fibers
over $\Lambda (N_0)\setminus (\Lambda(N_0)\cap U(N_0))$ by
transverse segments $\Sigma_x, \, x \in \Lambda (N_0)\setminus
U(N_0)$, so that for every leaf $L$ close enough to $\Lambda(N_0)$
the intersection $L\cap \Sigma_x$ is a single point.
\end{Proposition}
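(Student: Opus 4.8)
The plan is to reduce the statement to the product structure furnished by Theorem~\ref{Lemma:trivialholonomy}, essentially repeating the classical holonomy–extension argument for non-singular foliations with trivial holonomy. Write $\Lambda^*:=\Lambda(N_0)\setminus(\Lambda(N_0)\cap U(N_0))$. Since $\fa$ is a closed Bott-Morse foliation it has no saddle-connections, so $\sing(\fa)\cap\Lambda(N_0)=N_0$; as $N_0$ lies in the interior of the distinguished neighbourhood $U(N_0)$, the set $\Lambda^*$ contains no singular point. After a slight initial shrinking of $U(N_0)$ (using the transversality built into the definition of a distinguished neighbourhood), so that $\partial U(N_0)$ is transverse to $\Lambda(N_0)$, the set $\Lambda^*$ is a compact submanifold of $M\setminus\sing(\fa)$ with boundary $\partial\Lambda^*=\Lambda(N_0)\cap\partial U(N_0)$, which is a union of compact pieces of separatrix leaves of $\fa$.

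Next, fix $\nu$ large. By Theorem~\ref{Lemma:trivialholonomy} the restriction $\fa|_{W_\nu}$ is given by a Bott-Morse function $f_\nu\colon W_\nu\to\re$; since $\{W_\nu\}$ shrinks to $\Lambda(N_0)$ and $\sing(\fa)$ is a finite union of closed sets, for $\nu\gg1$ one has $\sing(\fa)\cap W_\nu=N_0$, so $c_0:=f_\nu(N_0)$ is the unique critical value, $f_\nu^{-1}(c_0)=\Lambda(N_0)$, and $f_\nu$ is a submersion on $W_\nu\setminus U(N_0)$; moreover the members of this fundamental system arise (in the proof of Theorem~\ref{Lemma:trivialholonomy}) as saturations of small neighbourhoods of $N_0$, so near $\Lambda(N_0)$ one has $W_\nu=f_\nu^{-1}([a_\nu,b_\nu])$ with $a_\nu<c_0<b_\nu$ and $a_\nu,b_\nu\to c_0$. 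Using the transverse orientability of $\fa$ (Definition~\ref{Definition:orientability}) pick a vector field $X$ on $M$ transverse to $\fa$ off $\sing(\fa)$; near $\Lambda^*$ one may take $X=\grad f_\nu$, so that $df_\nu(X)\ne0$ there, i.e.\ $X$ is transverse to the fibres of $f_\nu$. With $\phi_t$ the flow of $X$, compactness of $\Lambda^*$ yields $\epsilon>0$ for which $\Psi\colon\Lambda^*\times[-\epsilon,\epsilon]\to M$, $(x,t)\mapsto\phi_t(x)$, is a diffeomorphism onto a compact tubular neighbourhood $\mathcal{T}$ of $\Lambda^*$; each $\Sigma_x:=\Psi(\{x\}\times[-\epsilon,\epsilon])$ is a segment transverse to $\fa$, and $t\mapsto f_\nu(\phi_t(x))$ is strictly monotone with $c_0$ in the interior of its image.

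Finally I would match $\mathcal{T}$ with the fundamental system. Since $\{W_\nu\}$ shrinks to $\Lambda(N_0)$, for $\nu\gg1$ one has $W_\nu\setminus(W_\nu\cap U(N_0))\subset\mathcal{T}$ and, using $W_\nu=f_\nu^{-1}([a_\nu,b_\nu])$ near $\Lambda^*$ together with the strict monotonicity of $f_\nu$ along the $\Sigma_x$, this set is precisely the union of the truncated segments $\Sigma_x\cap f_\nu^{-1}([a_\nu,b_\nu])$, $x\in\Lambda^*$; each such truncated segment is connected, hence a genuine transverse segment, and composing $\Psi^{-1}$ with the projection onto $\Lambda^*$ exhibits $W_\nu\setminus(W_\nu\cap U(N_0))$ as a locally trivial fibre bundle over $\Lambda^*$ with these segments as fibres. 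For a leaf $L$ close enough to $\Lambda(N_0)$ one has $L=f_\nu^{-1}(c)$ with $|c-c_0|$ small, so by compactness of $\Lambda^*$ the value $c$ lies in the interior of $f_\nu(\Sigma_x)$ for every $x$, and strict monotonicity of $f_\nu|_{\Sigma_x}$ forces $\#(L\cap\Sigma_x)=1$, as required. The step most in need of care is this last matching near $\partial\Lambda^*\subset\partial U(N_0)$, where the segments $\Sigma_x$ run into $U(N_0)$: making it precise calls for a collar of $\partial\Lambda^*$ (or the slight shrinking of $U(N_0)$ above), so that $W_\nu\setminus(W_\nu\cap U(N_0))$ is exactly the union of the truncated segments and carries no extra pieces along $\partial U(N_0)$; everything else is bookkeeping on top of Theorem~\ref{Lemma:trivialholonomy} and the classical fact that the flow of a transverse vector field has orbits transverse to the leaves.
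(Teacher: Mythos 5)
Your proof is correct and follows essentially the same route as the paper's: both arguments rest on Theorem~\ref{Lemma:trivialholonomy} (trivial holonomy plus the local first integral) and then construct, away from the distinguished neighborhood $U(N_0)$, product/bundle neighborhoods of the truncated separatrices $\Lambda(N_0)\setminus U(N_0)$ fibered by transverse segments, exactly as in the proof of Reeb's local stability theorem. The only difference is presentational: you make the Reeb step explicit by flowing a gradient-like vector field transverse to the level sets of $f_\nu$, whereas the paper restricts $\fa$ to $M\setminus V$ and cites the classical stability argument for the resulting compact leaves with trivial holonomy; the boundary bookkeeping near $\partial U(N_0)$ that you flag is treated at the same (informal) level in the paper.
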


\begin{proof}
This is  essentially a consequence of the proof of Reeb's local  stability
theorem in \cite{Camacho-Lins Neto} (see Lemma 6 and Theorem 4 in
Chapter 4). By Theorem~\ref{Lemma:trivialholonomy}.
  there is a fundamental system of
distinguished neighborhoods $\{V_\alpha\}$ of $N_0$ such that
$\fa\big|_{V_\alpha}$ is given by a Bott-Morse function $f_\alpha$
with singular set $N_0$.

Denote by $\Lambda_j, \, j=1,...,r$ the connected components of
$\Lambda(N_0) \setminus N_0$. Given a distinguished neighborhood
$V=V_\alpha$ of $N_0$  we set
$\tilde\Lambda_j=\Lambda_j\setminus(\Lambda_j \cap V)$. The
$\tilde\Lambda_j$ are the  connected components of
$\Lambda(N_0)\setminus (\Lambda(N_0)\cap V)$. Set
$\tilde\fa:=\fa\big|_{M\setminus V}$. The leaves of $\tilde \fa$
are closed with trivial holonomy. Thus by the same arguments as in
the proof of Reeb's stability theorem, there is a fundamental system
of invariant neighborhoods $\{\tilde W_\nu ^{(j)}\}_\nu$ of
$\tilde \Lambda _j$ where $\tilde\fa$ is equivalent to a product
foliation on $\tilde\Lambda_j \times (-1,1)$ with leaves of the
form $\tilde\Lambda_j \times \{t\}, \, t \in (-1,1)$.

Moreover, there is a fibration of $\tilde W_\nu^{(j)}$ over
$\tilde \Lambda_j$ by transverse segments meeting  each leaf that
intersects $\tilde W_\nu ^{(j)}$ at exactly one point.
Considering the unions $W_\nu:=\big(\bigcup\limits_{j} \tilde W_\nu
^{(j)}\big)\cup V$ we obtain compact invariant neighborhoods of
$\Lambda(N_0)$ as in the statement.
\end{proof}

\begin{Definition}
\label{definition:orderone} {\rm Let $N_0\subset \Sad(\fa)$ be a
saddle component of the singular set of a Bott-Morse foliation $\fa$
and denote by  $\Lambda(N_0)$  the union of $N_0$
and its separatrices. We  say that $\fa$ has {\it order one over
$\Lambda(N_0)$} if there is a   distinguished neighborhood
$U(N_0)$ of $N_0$ and a fundamental system of compact invariant
neighborhoods $W_\nu$ of $\Lambda(N_0)$, such that:

\medskip
\noindent
 (1) For each $W_\nu$ one has that $W_\nu \setminus (W_\nu \cap U(N_0))$  fibers over
$\Lambda(N_0)\setminus (\Lambda(N_0)\cap U(N_0))$  by transverse
segments $\Sigma_x, \, x \in \Lambda (N_0)\setminus U(N_0)$; and

\smallskip \noindent
 (2) for every leaf $L$ close enough to $\Lambda(N_0)$ the
intersection $L\cap \Sigma_x$ is a single point.

\medskip \noindent
In this case each neighborhood $W_\nu$ is called a {\it bundle
neighborhood} of $\Lambda(N_0)$ with respect to $\fa$.}

\end{Definition}
The following result is essential in our work.

\begin{Theorem}[Partial Stability Theorem]
\label{Theorem:partialstabilitytheorem} Let $\fa$ be a closed
Bott-Morse foliation on a manifold $M$ and $N\subset \sing(\fa)$
 a center-type component with basin $\mathcal C(N,\fa)$. Let
$N_0\subset
\partial \mathcal C(N,\fa)$ be a saddle type component and let
$V\subset M$ be a distinguished neighborhood of $N_0$.  Let
$\Lambda(N_0)\subset \partial \C(N,\fa)$ be the union of $N_0$ and
a separatrix and $\Lambda_0$ a connected component of
$\Lambda(N_0)\setminus (\Lambda(N_0)\cap V)$. Then there is a
fundamental system of neighborhoods $\{W_\alpha\}$ of $\Lambda_0$,
each contained in a bundle neighborhood $W_\nu$,  such that if
$L_i,\, L_e$ are interior and exterior leaves of $\fa$
intersecting $W_\alpha$ with $L_i \subset \C(N,\fa)$ and $L_e\cap
\C(N,\fa)=\emptyset$, then $L_i\cap W_\alpha$ and $L_e \cap
W_\alpha$ are homeomorphic to $\Lambda_0$.
\end{Theorem}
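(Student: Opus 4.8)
\emph{Proof proposal.} The plan is to reduce the statement to the local ``bundle neighborhood'' picture of the saddle $N_0$, and then to run a Reeb--type connectedness argument along the single component $\Lambda_0$. First I would invoke Theorem~\ref{Lemma:trivialholonomy}: since $\fa$ is closed, the compact invariant set $\Lambda(N_0)$ has trivial holonomy, and hence, by Proposition~\ref{Proposition:trivialneighborhood}, $\fa$ has order one over $\Lambda(N_0)$, where we take the distinguished neighborhood to be the given $V$. Fix a bundle neighborhood $W_\nu$ of $\Lambda(N_0)$ of the form $W_\nu=\big(\bigcup_j\tilde W_\nu^{(j)}\big)\cup V$ produced by that proposition. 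Then $W_\nu\setminus(W_\nu\cap V)$ is fibered by transverse segments $\Sigma_x$, $x\in\Lambda(N_0)\setminus(\Lambda(N_0)\cap V)$, and every leaf sufficiently close to $\Lambda(N_0)$ meets each $\Sigma_x$ in exactly one point. Using the transverse orientability of $\fa$, coordinatize each $\Sigma_x$ as $[-1,1]$ with $0$ the point of $\Lambda(N_0)$; for $0<\alpha\le1$ put $\Sigma_x^\alpha=[-\alpha,\alpha]\subset\Sigma_x$ and
$$W_\alpha\;=\;\bigcup_{x\in\Lambda_0}\Sigma_x^\alpha\;\subset\;W_\nu\,.$$
Since $\Lambda_0$ is one of the connected components $\tilde\Lambda_j$ of $\Lambda(N_0)\setminus(\Lambda(N_0)\cap V)$, the sets $W_\alpha$ form a fundamental system of neighborhoods of $\Lambda_0$, each contained in the bundle neighborhood $W_\nu$; moreover, $W_\alpha$ is a fibre bundle over $\Lambda_0$ with interval fibre and distinguished zero section $\Lambda_0$, with projection $\pi_\alpha\colon W_\alpha\to\Lambda_0$.

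Second, I would prove that for every leaf $L$ of $\fa$ meeting $W_\alpha$ with $\alpha$ small enough, the trace $L\cap W_\alpha$ is the image of a continuous section of $\pi_\alpha$, and hence homeomorphic to $\Lambda_0$. This needs two facts. \emph{(a)} The leaf $L$ meets every fibre $\Sigma_x^\alpha$, $x\in\Lambda_0$: the set $A=\{\,x\in\Lambda_0:L\cap\Sigma_x^\alpha\neq\emptyset\,\}$ is non-empty by hypothesis, open by the local product structure of $\fa$ at regular points, and closed because the $\Sigma_x$ depend continuously on $x$, the segments $\Sigma_x^\alpha$ consist of regular points of $\fa$, and $L$ is closed off $\sing(\fa)$; connectedness of $\Lambda_0$ then gives $A=\Lambda_0$. \emph{(b)} The leaf $L$ meets each $\Sigma_x^\alpha$ in exactly one point, once $\alpha$ is small: the standard argument from the proof of Reeb's local stability theorem (and of the order-one property, Proposition~\ref{Proposition:trivialneighborhood}) shows that, after shrinking $\alpha$, two intersection points $p\neq q$ in $L\cap\Sigma_x^\alpha$ would be related by the $\fa$--holonomy germ along a loop in $\Lambda(N_0)\setminus(\Lambda(N_0)\cap V)$; since the holonomy of $\Lambda(N_0)$ is trivial (Theorem~\ref{Lemma:trivialholonomy}), that germ is the identity near $0$ on $\Sigma_x$, forcing $p=q$. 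Granting (a) and (b), $x\mapsto L\cap\Sigma_x^\alpha$ is a well-defined continuous section of $\pi_\alpha$ whose image is $L\cap W_\alpha$, so $\pi_\alpha$ restricts to a homeomorphism $L\cap W_\alpha\to\Lambda_0$.

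Finally, I would check that $L_i$ and $L_e$ are leaves to which the previous paragraph applies. Each of them meets $W_\alpha$ by hypothesis; the interior leaf $L_i\subset\C(N,\fa)$ is moreover compact by Theorem~\ref{Theorem:LocalStability} (although compactness is not needed above). For $L_e$: if $\alpha$ is small, $W_\alpha$ is disjoint from every separatrix of $N_0$ other than the one containing $\Lambda_0$ (the others do not accumulate on $\Lambda_0$, as $\fa$ has no saddle connections), so $L_e$ is either that separatrix---in which case $L_e\cap W_\alpha=\Lambda_0$ directly---or a leaf not contained in $\Lambda(N_0)$, to which (a)--(b) apply verbatim. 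In all cases $L_i\cap W_\alpha$ and $L_e\cap W_\alpha$ are homeomorphic to $\Lambda_0$, as claimed.

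The step I expect to be the main obstacle is \emph{(b)}: one must know that the neighborhood $W_\alpha$ of $\Lambda_0$ is ``uniform'', i.e.\ that an arbitrary leaf---in particular a topologically complicated \emph{exterior} leaf $L_e$ which need not be globally close to all of $\Lambda(N_0)$---returns to each fibre $\Sigma_x^\alpha$ at most once. This is precisely where the triviality of the holonomy of the \emph{entire} invariant set $\Lambda(N_0)$, and not merely of $N_0$ or of the leaf $\Lambda_0$, is essential: it is the global triviality that lets $\alpha$ be shrunk until every return of a leaf to a fibre is forced, by the corresponding holonomy germ, to coincide with its starting point. A secondary, routine point is to run the argument of \emph{(a)} near the ``ends'' of $\Lambda_0$, where $\Lambda_0$ approaches $\partial V$ and one uses instead the explicit product structure of $\fa$ inside $V$ coming from its defining Bott--Morse function.
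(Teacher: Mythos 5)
Your proof is correct and follows essentially the same route as the paper's: both reduce the statement to Proposition~\ref{Proposition:trivialneighborhood} and then realize $L\cap W_\alpha$ as the image of the section $x\mapsto L\cap\Sigma_x$ over $\Lambda_0$. The paper dismisses the details as a ``straightforward consequence,'' whereas you have usefully spelled out the points it leaves implicit (the construction of the $W_\alpha$, the open--closed argument showing every fibre is met, and the role of the trivial holonomy of $\Lambda(N_0)$ in forcing each fibre to be met exactly once).
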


\begin{proof}
 The proof is a straightforward consequence of Proposition~\ref{Proposition:trivialneighborhood}.
 Let $\Lambda(N_0)$ be the union of $N_0$ with all its
separatrices. Consider the maps $\phi_i\colon\Lambda_0 \to L_i, \,
x\mapsto \phi_i(x):=L_i \cap \Sigma_x$ and $\phi_e \colon
\Lambda_0 \to L_e, \, x \mapsto \phi_e(x):=L_e \cap \Sigma_e$.
Then we define $\phi \colon L_i \to L_e$ as $\phi=\phi_e \circ
\phi_i^{-1}$.
\end{proof}




\subsection{Topology of leaves near compact invariant sets with saddles}

As before, let $N$ be a saddle-type component of the singular set
of a Bott-Morse foliation $\fa$ on a manifold $M$, and let
$\Lambda$ be the union of $N$ and all its separatrices. We want to
use the results of the previous sections to compare the topology
of the leaves near $\Lambda$ with that of $\Lambda$ itself, so we
say first a few words about the latter. We start with the case of
an isolated saddle.

Assume $N= \{0\}$ is an isolated saddle with Morse index $r = 1$
or $r = m -1$. Since both cases are similar we discuss only the
case $r=1$. As mentioned earlier, in this case there are two local
separatrices, corresponding to the two branches of the
hypersurface
$$ V \,=\, \big \{
\,x_1^2 \,=  \sum\limits_{j=2}^{m} x_j^2 \big \} \,.$$ The saddle
is the point $0$ and the set $V \setminus \{0\}$ consists of two
connected components. It can happen that both components belong to
the same leaf or that they belong to different leaves. In the
first case $\Lambda$ consists of a single leaf $L$, compactified
by attaching to it the saddle singularity $0$; this situation is
depicted in figure \ref{Figure:isolated1} below. One has a
self-saddle-connection.

If the two components of $V \setminus \{0\}$ belong to different
leaves, then $\Lambda$ consists of these two leaves union the
saddle point $\{0\}$, and there cannot be more separatrices since
every separatrix must contain a local separatrix, and there are
only two of them.

When the Morse index of the saddle at $0$ is not $1$ nor $m-1$,
then the local separatrix is connected, so there can only be one
global separatrix $L$,  thus $\Lambda = L \cup \{0\}$.

Similar statements obviously hold for a non-isolated saddle $N$ of
dimension $n$: if we look at the restriction to a distinguished neighborhood of $N$
we see that the separatrix (or separatrices) is a fibre bundle
over $N$ with fibre the trace in a transversal. Hence, if its
(transverse) Morse index is $1$ or $m-n-1$, then one has two local
separatrices, which may belong to the same leaf of $\fa$ or to two
different leaves, and this is independent of the choice of point
in $N$. If the Morse index is not $1$ nor $m-n-1$, then one has
only one local separatrix, which must belong to a single leaf.

Summarizing one has:

\begin{Lemma}
Let $\fa$ be a closed Bott-Morse foliation on $M$.  Let $N$ be an $n$-dimensional, $n \ge 0$,
saddle-type component of the singular set of $\fa$, and let
$\Lambda$ be the union of $N$ and all its separatrices. Then:

{\bf i)} If the Morse index of $N$ is neither $1$ nor $(m-n-1)$, then
$N$ has only one local separatrix. Hence $\Lambda$ consists of $N$
and one single leaf $L$ of $\fa$: $\Lambda = L \cup \{N\}$.

{\bf ii)} If the Morse index of $N$ is $1$ or $(m-n-1)$, then $N$
has two local separatrices, which may or may not belong to the
same leaf. Hence $\Lambda$ may consist of $N$ and one single leaf
$L$, or $N$ union two leaves.
\end{Lemma}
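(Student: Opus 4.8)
The plan is to reduce the statement to the transverse Morse model on a single slice, and then to globalise along $N$ using the product structure of a distinguished neighbourhood together with the triviality of the holonomy of $N$.

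First I would fix a point $p\in N$ and a transverse slice $\Sigma$ of dimension $k:=m-n$, on which $\fa|_\Sigma$ is an ordinary Morse singularity given in suitable coordinates by $f|_\Sigma=-y_1^2-\cdots-y_r^2+y_{r+1}^2+\cdots+y_k^2$, $r$ being the transverse Morse index, $1\le r\le k-1$. The trace on $\Sigma$ of $N$ together with the local separatrices is the cone $V=(f|_\Sigma)^{-1}(0)$, whose topology is governed by its link $K\cong S^{r-1}\times S^{k-r-1}$, as recalled in the ``Isolated saddle'' subsection (cf.\ \cite{Milnor3}). Hence $V\setminus\{p\}$ is connected exactly when $2\le r\le k-2$, i.e.\ when the Morse index is neither $1$ nor $m-n-1$, and it has two components---its two ``branches''---when $r\in\{1,k-1\}$ (and $k\ge 3$); these branches are distinguished by the sign of the coordinate along the rank-one summand of the normal bundle of $N$ on which the transverse Hessian is definite. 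Since by definition every separatrix through $N$ meets $\Sigma$ in a component of $V\setminus N$, it is enough to count these components and to determine into which leaves of $\fa$ they fall.

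For the globalisation I would invoke Theorem~\ref{Lemma:trivialholonomy}: the holonomy of $N$ is trivial, so $N$ admits a distinguished neighbourhood $W=W(N)$ (Section~\ref{Section:distinguishedneigh}) on which $\fa$ is defined by a single Bott-Morse function $f_0$ with critical set $N$, and by Proposition~\ref{Lemma:productN_0} each leaf of $\fa$ meets $W$ in a fibre bundle over $N$ with fibre its trace in $\Sigma$. Consequently the local separatrix set $f_0^{-1}(0)\setminus N$ is a fibre bundle over $N$ with fibre $V\setminus\{p\}$, each of whose connected components is a leaf of $\fa|_W$ and therefore lies in a single leaf of $\fa$ whose closure contains $N$; moreover every separatrix of $N$ is obtained in this way, so the number of separatrices of $N$ does not exceed the number of components of $V\setminus\{p\}$. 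If the Morse index is neither $1$ nor $m-n-1$, then $V\setminus\{p\}$ is connected, hence $N$ has a unique separatrix $L$ and $\Lambda=N\cup L$, which is (i). If the Morse index is $1$ or $m-n-1$, then $V\setminus\{p\}$ has two components and $f_0^{-1}(0)\setminus N$ has either one component (when the monodromy of the definite rank-one normal summand around $N$ interchanges the two branches) or two; accordingly $N$ has one separatrix, or two separatrices which may still coincide if the branches reconnect outside $W$ (a self-saddle-connection, permitted here). In every case $\Lambda$ is $N$ together with at most two leaves, which is (ii).

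The step I expect to require the most care is the bookkeeping of the branches in case (ii): the clause ``two local separatrices, which may or may not belong to the same leaf'' has to be seen to cover simultaneously the possible nonorientability of the definite rank-one normal summand over $N$ (which can already merge the two branches inside $W$) and the possibility that two genuinely distinct branches lie on one leaf that closes up elsewhere in $M$. One should also dispose separately of the genuinely low-dimensional case $m-n=2$, where $r=1=k-1$ and $V$ is a union of four half-lines, by the same local-to-global argument. The remaining ingredients---the link computation and the passage to a transversal---are routine given what has already been established.
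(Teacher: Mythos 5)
Your proof is correct and follows essentially the same route as the paper: the paper's justification for this lemma is precisely the discussion preceding it, which likewise reduces to the transverse Morse cone on a slice, counts the components of $V\setminus\{p\}$ via the link $S^{r-1}\times S^{m-n-r-1}$, and globalises along $N$ using the trivial holonomy and the fibre-bundle structure of a distinguished neighbourhood. Your extra care about the monodromy of the definite rank-one normal summand (which could already merge the two branches over $N$) addresses a point the paper glosses over with ``this is independent of the choice of point in $N$'', but it does not affect the stated conclusion, which only asserts that $\Lambda$ contains at most two leaves.
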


Notice that since $\fa$ is closed, of codimension $1$
and transversally oriented,  if we denote by $\buildrel
{\circ}\over{W}(N)$ the interior of a distinguished neighborhood
of $N$, then $\Lambda \setminus \buildrel {\circ}\over{W}(N)$ is a
compact, codimension 1, oriented submanifold of $M$, consisting of
either one or two components, which are leaves of $\fa$ restricted to
${({M\setminus \buildrel {\circ}\over{W}(N)})}$.

Let us now look at the leaves of $\fa$ near $\Lambda$. Since the
foliation is closed and has Bott-Morse singularities, we can
choose a distinguished neighborhood $W(N)$ small enough so that
every leaf of $\fa$ that meets $W(N)$  is compact. This
condition is satisfied whenever $N$ is in the boundary of the
basin of some center, by Theorem~\ref{Theorem:partialstabilitytheorem}.

The case of an isolated saddle $N$ is now rather simple: assume
first $\Lambda$ consists of $N$ and one single leaf $L_0$. $L_0$
is oriented, of codimension 1, it splits $M$ in two connected
components that we denote by $M^+$ and $M^-$. By Theorem~\ref{Theorem:partialstabilitytheorem}, away from a distinguished
neighborhood $W(N)$, the leaves of $\fa$ in either side $M^+$ and
$M^-$ are homeomorphic to $\Lambda \setminus W(N)$. Then, from the
discussion for isolated saddles in Section
\ref{Section:distinguishedneigh} we see that if $L^+$ is a leaf in
$M^+$ that meets $W(N)$ and $N$ has Morse index $r$, then $L^+
\setminus W(N)$ has boundary $S^{r-1} \times S{m-r-1}$, which is
also the boundary of $\Lambda \setminus W(N)$ and the boundary of
$L^- \setminus W(N)$ for leaves in $M^-$.

To recover $L^+$ up to homeomorphism, we attach $D^{r} \times
S{m-r-1}$ to $\Lambda \setminus W(N)$ along their common boundary;
to get $L^-$ we attach $S^{r-1} \times D{m-r}$ to $\Lambda
\setminus W(N)$ along their common boundary; and to get $\Lambda$
back we simply collapse the boundary of $L^-  \setminus W(N)$ to
a point.

In other words, the leaves $L^+$ and $L^-$ are obtained from each
other by Milnor-Wallace surgery (\cite{Milnor1}).

If the Morse index $r$ of $N$ is $1$ or $m-1$, what we are doing
is that we remove from one leaf $L^+$ a copy of $S^{m-2} \times
D^1$, where $D^1 = I $ is an interval, so one has boundary
$S^{m-2} \times S^0$, {\it i.e.}, two $(m-2)$-spheres, and then
glue back a copy of $D^{m-1} \times S^0$, {\it i.e.}, two
$(m-1)$-discs, or viceversa. In other words, $L^+$ is obtained
from $L^-$ by attaching to it a 1-handle.

If the Morse index $r$ of $N$ is $1$ or $m-1$ and $\Lambda$
consists of $N$ and two leaves $L_1$ and $L_2$, then all the
previous discussion applies in exactly the same way, the only
difference is that now $L^-  \setminus W(N)$ has two connected
components, homeomorphic to $(L_1 \cup L_2) \setminus W(N)$, and
$L^+$  is the connected sum of these two manifolds.

Now consider a saddle component $N$ of dimension $n>0$ and
(transverse) Morse index $r$, $m - n > r > 0$.  Let $W(N)$ be an
open distinguished neighborhood of $N$, sufficiently small so that
every leaf that meets  $W(N)$ is compact. Such a neighborhood
exists because the foliation is proper, there are no
saddle-connections, there are only finitely many components of
the singular set of $\fa$, and at each component there are at most
two separatrices.

As in the isolated singularity case, $\Lambda$ splits
 $M$ in two connected
components that we denote by $M^+$ and $M^-$. By theorem
\ref{Theorem:partialstabilitytheorem}, away from $W(N)$, the
leaves of $\fa$ in either side $M^+$ and $M^-$ are homeomorphic to
$\Lambda \setminus W(N)$. From the discussion  in Section
\ref{Section:distinguishedneigh} we see that if $L^+$ is a leaf in
$M^+$ that meets $W(N)$, then $L^+ \setminus W(N)$ has boundary
$K$ which is diffeomorphic to an $(S^{r-1} \times
S^{m-n-r-1})$-bundle over $N$; up to homeomorphism, this  is also
the boundary of $\Lambda \setminus W(N)$,  and the boundary of
$L^- \setminus W(N)$ for leaves in $M^-$ sufficiently near
$\Lambda$.

To recover $L^+$ up to homeomorphism, we attach to $\Lambda \setminus
W(N)$  the corresponding bundle with fibre  $(D^{r} \times
S^{m-n-r-1})$, {\it i.e.}, we ``fill in'' the $(r-1)$-sphere in each fiber;
to get $L^-$ we ``fill in'' the $(m-n-r-1)$-sphere in each fiber, {\it
  i.e.},  we attach the corresponding $(S^{r-1} \times
D^{m-n-r})$-bundle
to $\Lambda \setminus W(N)$ along their common boundary; and to get $\Lambda$
back we collapse to a point each  $(S^{r-1} \times S^{m-n-r-1})$-fiber in
 the boundary of $L^-  \setminus W(N)$.

\medskip

We summarize the previous discussion in the following theorem:

\begin{Theorem}\label{topology of boundary}
 Let $\fa$ be a closed Bott-Morse foliation on $M^m$.
Let $N$ be a saddle-type component of dimension $n \ge 0$ and
{\rm(}transverse{\rm)} Morse index $r$, $m-n > r > 0$. Let  $W(N)$
be a sufficiently small distinguished open neighborhood of $N$
such that every leaf of $\fa$ that meets  $W(N)$ is compact. Let
$\Lambda$ be the union of $N$ and its separatrices. Then:

{\bf i)} The set $\Lambda$ consists of $N$ and at most two leaves
of $\fa$. Moreover, if $r \ne 1, m-n-1$, then there is exactly one
leaf of $\fa$ in $\Lambda$.

{\bf ii)} If $L$ is a leaf of $\Lambda$ sufficiently near
$\Lambda$ at some point, then $L \setminus W(N) $ is a compact
manifold with boundary $K$ homeomorphic to the boundary of
$\Lambda \setminus W(N) $.

{\bf iii)} The manifold  $K$ is a fibre bundle over $N$ with fiber  $(S^{r-1} \times
S^{m-n-r-1})$.

{\bf iv)} The leaves of $\fa$ near  $\Lambda$ which are contained in
different connected components of $M\setminus \Lambda$ are obtained
from each other by performing Milnor-Wallace surgery fiberwise. More
precisely, up to homeomorphism, to get the leaves in one side we
attach to  $\Lambda \setminus W(N) $ the obvious bundle with fiber  $(D^{r} \times
S^{m-n-r-1})$, and to get the leaves in the other side we attach the corresponding
bundle with fiber  $(S^{r-1} \times D^{m-n-r})$. To recover $\Lambda$
we just collapse to a point each  $(S^{r-1} \times
S^{m-n-r-1})$-fiber in $K$.

\end{Theorem}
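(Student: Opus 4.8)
The plan is to assemble the four assertions from the structural facts already established, in order. For \textbf{(i)} I would simply invoke the Lemma stated immediately above: a saddle component $N$ of dimension $n$ has one local separatrix when its transverse Morse index $r$ is different from $1$ and $m-n-1$, and exactly two local separatrices (possibly lying on one and the same leaf) when $r\in\{1,m-n-1\}$. Since any global separatrix of $N$ restricts, on a transversal slice $\Sigma_p$, to a local separatrix of $\fa|_{\Sigma_p}$, and there are at most two of the latter, $\Lambda$ is the union of $N$ with at most two leaves of $\fa$; and when $r\neq 1,m-n-1$ the single local separatrix forces a single global leaf $L$, so that $\Lambda=L\cup N$.

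For \textbf{(ii)} I would use the bundle--neighborhood structure. Because $\fa$ is closed with finitely many singular components, has no saddle-connections, and $N$ has at most two separatrices, one may choose the distinguished neighborhood $W(N)$ small enough that every leaf meeting $W(N)$ is compact; then $\Lambda\setminus \buildrel{\circ}\over{W}(N)$ is a compact, oriented, codimension one submanifold of $M\setminus \buildrel{\circ}\over{W}(N)$, being a union of compact leaves of the restricted foliation. By the triviality of the holonomy of $\Lambda$ (Theorem~\ref{Lemma:trivialholonomy}) together with Proposition~\ref{Proposition:trivialneighborhood}, there is a fundamental system of bundle neighborhoods $W_\nu$ of $\Lambda$ such that $W_\nu\setminus(W_\nu\cap W(N))$ fibers over $\Lambda\setminus W(N)$ by transverse segments, each meeting a leaf close enough to $\Lambda$ in exactly one point (and when $N$ lies in the boundary of a basin this is precisely Theorem~\ref{Theorem:partialstabilitytheorem}). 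Hence, for any leaf $L$ sufficiently near $\Lambda$ on either side, the section $L\setminus W(N)$ is a compact manifold homeomorphic to $\Lambda\setminus W(N)$, so in particular its boundary $K$ is homeomorphic to $\partial(\Lambda\setminus W(N))$.

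For \textbf{(iii)} I would compute $K$ one normal fibre at a time. On a transversal slice $\Sigma_p$ through $p\in N$ the foliation is the Morse model $\sum_{j=1}^{r}-x_j^2+\sum_{j=r+1}^{m-n}x_j^2$ of Section~\ref{Section:distinguishedneigh}, whose singular-level cone $V$ has link $V\cap\mathbb{S}$ diffeomorphic to $S^{r-1}\times S^{m-n-r-1}$ (classical, cf. \cite{Milnor3}). Since a distinguished neighborhood $W(N)$ is identified with the normal bundle of $N$, with $\fa|_{W(N)}$ of product type in each normal fibre and each fibre decomposed into the three pieces of the displayed decomposition in the excerpt, the set $K=\partial(L\setminus W(N))$ is, fibre by fibre over $N$, exactly this link; Proposition~\ref{Lemma:productN_0} then yields that $K$ is a locally trivial $(S^{r-1}\times S^{m-n-r-1})$-bundle over $N$.

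For \textbf{(iv)} I would combine the local Milnor--Wallace picture with the fibre-bundle structure of $W(N)$. Fibrewise over $N$, applying Thom transversality and the first Thom--Mather isotopy theorem to the Morse function in each normal fibre (as in Section~\ref{Section:distinguishedneigh}), a leaf on the ``positive'' side is obtained, up to homeomorphism, by removing the open unit ball from $V$ and gluing $D^{r}\times S^{m-n-r-1}$ along $S^{r-1}\times S^{m-n-r-1}$; a leaf on the ``negative'' side by gluing $S^{r-1}\times D^{m-n-r}$ instead; and collapsing each $S^{r-1}\times S^{m-n-r-1}$ to a point returns $V$. This is precisely Milnor--Wallace surgery \cite{Milnor1}, and the product structure of $W(N)$ with Proposition~\ref{Lemma:productN_0} lets these fibrewise operations be assembled into bundle operations over $N$; using \textbf{(ii)}, the trace $L\setminus W(N)$ of any leaf $L$ near $\Lambda$ is homeomorphic to $\Lambda\setminus W(N)$ via the transverse-segment fibration, so $L$ is recovered by gluing the appropriate bundle onto $\Lambda\setminus W(N)$ along $K$, and $\Lambda$ is recovered by collapsing each $(S^{r-1}\times S^{m-n-r-1})$-fibre of $K$. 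The step I expect to be the main obstacle is exactly this global patching: verifying that the fibrewise surgeries fit together into a single well-defined bundle operation, and that the homeomorphism type of the leaf is independent of the choices (of transversal slices, of $W(N)$, and of the isotopies realising ``$V_t$ differs from $V$ only inside the ball''). This rests on the triviality of the holonomy of $\Lambda$ (Theorem~\ref{Lemma:trivialholonomy}) and on the identification of $W(N)$ with the normal bundle of $N$, which is what makes the transverse-segment fibration and the fibrewise Thom--Mather isotopies compatible over all of $N$.
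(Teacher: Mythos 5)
Your proposal is correct and follows essentially the same route as the paper, which presents this theorem explicitly as a summary of the preceding discussion: part (i) from the lemma on the number of separatrices, part (ii) from the Partial Stability Theorem together with the trivial holonomy of $\Lambda$ and the bundle neighborhoods of Proposition~\ref{Proposition:trivialneighborhood}, part (iii) from the link computation $K\cong S^{r-1}\times S^{m-n-r-1}$ in each normal fibre assembled via Proposition~\ref{Lemma:productN_0}, and part (iv) from fibrewise Milnor--Wallace surgery. Your closing remark about the global patching of the fibrewise surgeries correctly identifies the one point the paper treats informally, resolving it exactly as the paper implicitly does, via the identification of $W(N)$ with the normal bundle and the triviality of the holonomy.
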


In the sections below we give   examples of isolated saddles on
3-manifolds with Morse index 1 having only one global  separatrix
and also examples having two separatrices.

\section{Pairings and Foliated surgery}
\label{Section:foliatedsurgery}


Let us motivate what follows by recalling the classical
center-saddle bifurcation in the plane, which is depicted in
Figure~\ref{Figure:centroseladim2trivial0} in page \pageref{Figure:centroseladim2trivial0}.  We have an isolated center in the plane and a saddle in
the boundary of its basin. These are replaced via an isotopy  by a
trivial foliation. This elimination procedure may be described as
follows. Consider the 1-parameter family of vector fields $Z_\ve = (x_1^2-\ve)\frac{\po}{\po x_1} +
x_2\,\frac{\po}{\po x_2}$\,, $\ve > 0$. For $\ve > 0$, this vector field has a
pair  of singularities saddle-source: it has  a saddle-node singularity for
$\ve = 0$ and  no singularity for $\ve < 0$.   Thus
the original pairing center-saddle can be viewed as a deformation
of a trivial vertical foliation via passing through a saddle-node
singularity.


Other center-saddle arrangements in dimension two are depicted in
Figure~\ref{Figure:nontrivial1}. The figure on the left shows a
disc with a  center-singularity which is replaced (or replaces,
depending on the orientation of the arrow) a disc with three
singularities: two of them are centers and one is a saddle which
is in the boundary of the basin of both centers.  The figure on the
right also shows a saddle in the boundary of the basins of two
centers. Notice that there is a significant difference in these
two cases. In the first case the saddle has only one separatrix,
while in the second case it has two separatrices.

 \begin{figure}[ht]
\begin{center}
\includegraphics[scale=0.4]{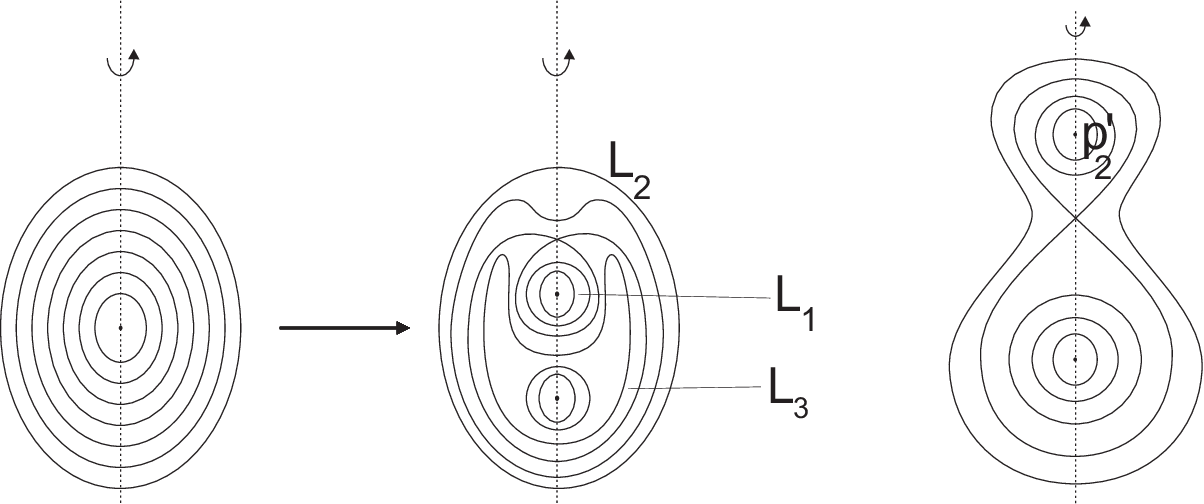}
\caption{ }
\label{Figure:nontrivial1}
\end{center}
\end{figure}

More generally,

\begin{Definition}[Pairings]
\label{Definition:pairing} {\rm  Let $\fa$ be a foliation with Bott-Morse singularities on $M$.
 We say that a saddle component
$N_0\subset \sing(\fa)$ and a center component $N_1\subset
\sing(\fa)$  are {\it paired},  denoted $N_0 \leftrightarrow
N_1$, if $N_0$ is in the boundary of the basin of $N_1$, {\it
i.e.},  $N_0\subset \partial \mathcal C(N_1,\fa)$.  If $N_0$ is paired with several center components $N_1,...,N_r$ we denote it
by $N_0 \leftrightarrow (N_1,...,N_r)$.}
\end{Definition}

\begin{Definition}[Foliated Surgery]
\label{Definition:surgery}
  { \rm Let $\fa$ be a Bott-Morse
foliation on $M^m$, and let $N_0$ be a saddle component which is paired with some center components $N_1,...,N_r$, $r \ge 1$.
 A {\it foliated surgery} for the pairing $N_0 \leftrightarrow
 (N_1,...,N_r)$
 means a choice of a compact, connected region $R \subset M$ containing
$N_0, N_1,...,N_r$ in its interior $\buildrel {\circ} \over {R
\;}$, such that:

\begin{enumerate}

\item The restriction of the foliation $\fa$ to $R$ can be
replaced by a Bott-Morse foliation $\fa_m$ on $R$ which coincides
with $\fa$ in a neighborhood of the boundary of $R$;

\item The new foliation on $M$, also denoted $\fa_m$, is
Bott-Morse;

\item If $\fa$ is proper, so is $\fa_m$.

\end{enumerate}}

\end{Definition}

Foliated surgery can be used   to reduce the singularities of
Bott-Morse foliations. The following definitions make this idea
precise.

\begin{Definition}[Reducible and trivial pairings]
\label{Definition:Redpairing} Let {\rm
 $\fa$ be  a  closed Bott-Morse foliation on a connected
closed  oriented $m$-manifold $M$.

\begin{enumerate}

 \item We say that a pairing $N_0 \leftrightarrow
(N_1,...,N_s)$ is {\it trivial}  if there is a foliated surgery
for this pairing  in a region $R $,
  such that   the resulting foliation is regular on $R$.
 In this case we say that the new
foliation $ \fa_m$ is obtained from $\fa$ by {\it eliminating} the
components $N_0, N_1,...,N_s$.

\item   We say that a pairing $N_0 \leftrightarrow (N_1,...,N_s)$
is {\it reducible} if there is a foliated surgery for it in a
region $R $, such that the singular set of the resulting foliation
$ \fa_m$ has at most $s$
 components in $R$. In this  case we say that
  $ \fa_m$ is obtained from $\fa$ by {\it replacement} or
{\it reduction} of the components $N_0, N_1,...,N_s$.

  \item A pairing which is not reducible is called {\it
irreducible}.
\end{enumerate}}
\end{Definition}

\begin{Remark}
{\rm We will see later
 that the type of
surgeries we perform for proving Theorem~\ref{Theorem:Center-Saddledimensionthree} also satisfy the following
condition:

\vglue.1in \par{4.} The holonomy pseudogroup of $\fa_m$ is
obtained from the holonomy pseudogroup of $\fa$ {\it by reduction}
in the following sense:} \end{Remark}

\begin{Definition}{\rm
In the above situation, we say that the holonomy pseudogroup of $\fa_m$ is {\it obtained
by reduction} from the holonomy pseudogroup of $\fa$ if there is a
collection $\mathcal T=\{T_j\}_{j\in J}$ of sections $T_j \subset
M$ transverse to $\fa_m$ such that:

i)  Each leaf of $\fa_m$ intersects some $T_j$;

ii) Each $T_j$ is also transverse to $\fa$, though it may happen
that not all leaves of $\fa$ intersect the union $\bigcup
\limits_{j\in J} T_j$;

iii) The holonomy pseudogroup $\Hol(\fa_m,\mathcal T)$ of $\fa_m$
with respect to $\mathcal T$  is isomorphic to the subpseudogroup  of the
holonomy pseudogroup  of $\fa$ generated by the holonomy maps
 corresponding to paths $\gamma$
contained each in some leaf $L$ of $\fa$ and joining a point in
$T_i\cap L$ to a point in $T_j \cap L$.}
\end{Definition}

\medskip

\subsection{Pairings in dimension three}

We now introduce certain types of possible center-saddle pairings
that appear in dimension three for closed Bott-Morse foliations
$\fa$. These give all possible pairings satisfying the conditions
of Theorem~\ref{Theorem:Center-Saddledimensionthree}.

\subsubsection{Bundle-type pairings}

\noindent We have two different bundle-type pairings obtained by ``lifting"  two-dimensional pictures to a product $S^1
\times D^2$, where $D^2$ is homeomorphic to a $2$-disc. We
recall that every oriented 2-disc bundle over $S^1$ is trivial and, moreover,
every circle embedded in an oriented manifold has trivial normal bundle.

Just as in \ref{Rem:notptoducts}, the foliations we give on $S^1
\times D^2$ are locally products of a 2-dimensional picture
by an interval, but they may not be products globally, though for
simplicity in the pictures below we always represent the product
 foliations. Pairing (P.NI.1) (abbreviation for ``product-nonisolated" of type $1$) is depicted in
Figure~\ref{Figure:dimensiontwo0} and consists  of an $S^1$-saddle
component and an $S^1$-center component. This pairing is trivial.

\begin{figure}[ht]
\begin{center}
\includegraphics[scale=0.8]{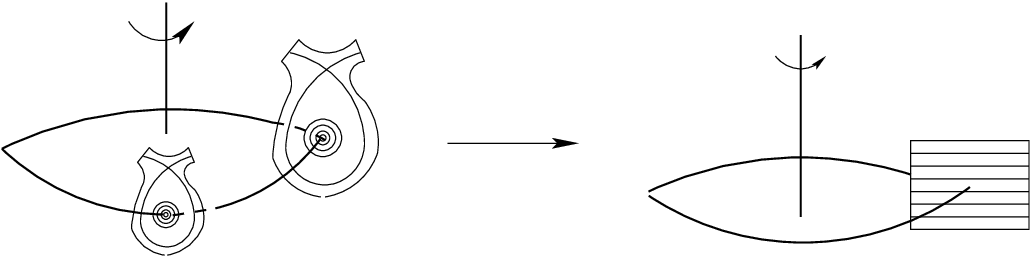}
\end{center}
\caption{P.NI.1}
 \label{Figure:dimensiontwo0}
\end{figure}

The second bundle-type  pairing is (P.NI.2) (abbreviation for ``product-nonisolated" of type $2$), obtained as the lift to $S^1
\times D^2$ of  a pairing as in Figure~\ref{Figure:dimensiontwo}
below. Here an $S^1$-saddle
 component is paired with two $S^1$-center components. One of these is
 the previous trivial
 pairing, the other is the non-trivial pairing (P.NI.2).

\begin{figure}[ht]
\begin{center}
\includegraphics[scale=0.45]{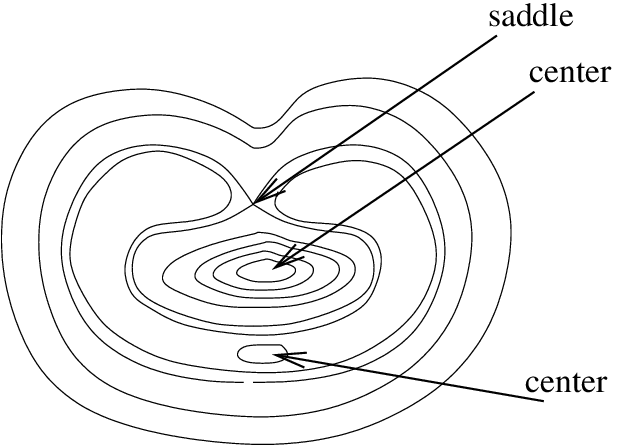}
\end{center}
\caption{P.NI.2} \label{Figure:dimensiontwo}
\end{figure}

\subsubsection{Isolated singularities}
The list of possible pairings of isolated singularities in
dimension $3$ is given in \cite{Camacho-Scardua}.
 The first is a trivial pairing (T.I) (for ``trivial-isolated")
  as in Example~\ref{Example:trivialfoliation}.
 This is obtained as rotation of the two-dimensional picture in
 Figure~\ref{Figure:centroseladim2trivial0}
 with respect to the vertical axis. The second, (NT.I.1)
 (for nontrivial-isolated of type
 $1$) is depicted on the left in
 Figure~\ref{Figure:isolated1}; it is a non-trivial pairing with spherical leaves inside the
 region bounded by the separatrices and tori outside.
  On the right in  Figure~\ref{Figure:isolated1} we have a non-trivial, isolated
   pairing of type $2$ (NT.I.2),
 obtained by rotation of the image with respect to the {\it
horizontal} axis. The pairing consists of an isolated saddle at
the origin and a center {\em at infinity}. The outer and the inner
leaves are $2$-spheres and the picture can be completed by two
isolated centers (on the left and on the right of the picture)
each one of them defining  a trivial pairing of type $(T.I)$ with
the saddle.

 \begin{figure}[ht]
\begin{center}
\includegraphics[scale=0.4]{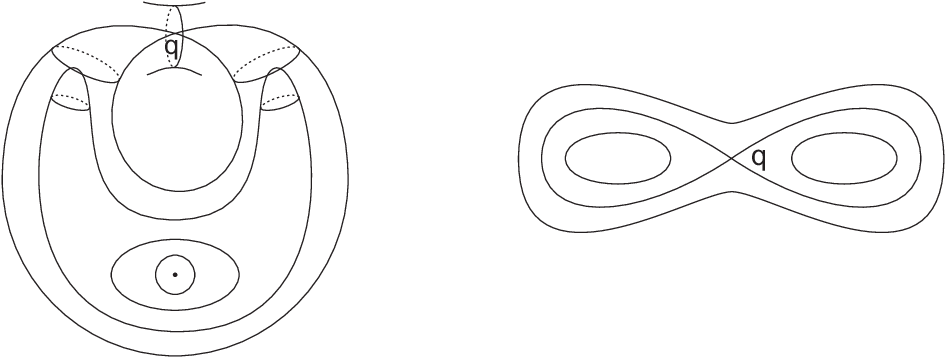}
\caption{Pairings NT.I.1 and NT.I.2}

\label{Figure:isolated1}
\end{center}
\end{figure}
\vglue.1in

Another rotation type pairing is  depicted in
Figure~\ref{Figure:isolated2}. Here we start with   an isolated
center $C_0$ at the origin in a two dimensional picture. Then we
introduce a  (two-dimensional)  center saddle pairing of type
trivial isolated, and we rotate it with respect to the vertical
axis. The pairing we consider is the one given by the original
center $C_0$ and the saddle. This pairing is non trivial and will
be called (NT.I.3) (for ``nontrivial-isolated" of type $3$).

 \begin{figure}[ht]
\begin{center}
\includegraphics[scale=0.4]{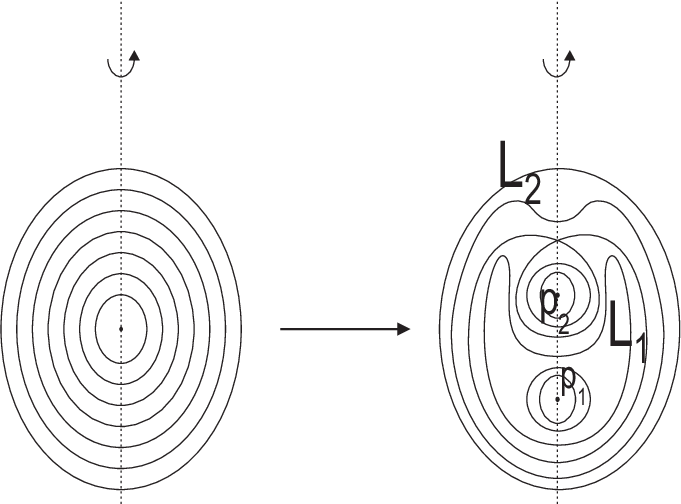}
\caption{Pairing $(NT.I.3)$} \label{Figure:isolated2}
\end{center}
\end{figure}


Notice that the 2-dimensional picture of this pairing is like in
Figure~\ref{Figure:dimensiontwo}. The difference is that for that
pairing we take the product with $S^1$, while here we are making
the 2-dimensional model rotate with respect to an axes as
indicated in Figure~\ref{Figure:isolated2}.

\subsubsection{Non-isolated saddle}
Rotation of  Figure~\ref{Figure:notproduct2} with respect to the
vertical axis gives two isolated centers paired with a
non-isolated saddle. These will be called (U.NI.) (for
 ``upper-nonisolated") and the other center-saddle pairing will be
called (L.NI.) (for ``lower-nonisolated").

\begin{figure}[ht]
\begin{center}
\includegraphics[scale=0.4]{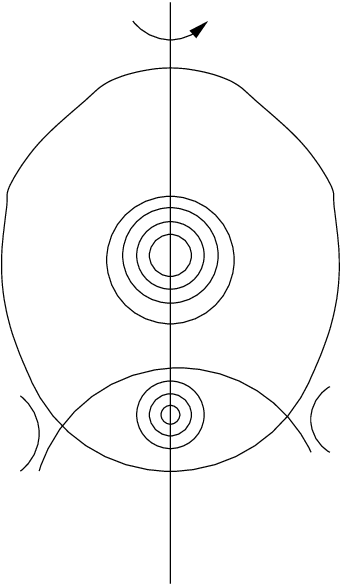}
\caption{Pairings $(U.NI.)$ and $(L.NI.)$. }
\label{Figure:notproduct2}
\end{center}
\end{figure}

 One also has a
product type pairing    (P.NI.) (for
`product-nonisolated") obtained by rotating the
Figure~\ref{Figure:sphere3} with respect to the vertical axis. The
pairing we consider consists of the non-isolated center and the
non-isolated saddle.

\begin{figure}[ht]
\begin{center}
\includegraphics[scale=0.5]{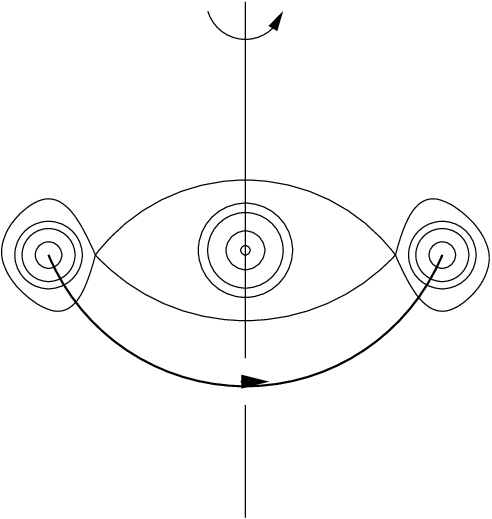}
\caption{Pairing $(P.NI.)$} \label{Figure:sphere3}
\end{center}
\end{figure}

\subsection{Irreducible components}~\label{subsection:irreduciblepairingsdim3}
We describe the irreducible  pairings in dimension three that
appear in the situation we study, {\it i.e},. $\fa$ closed Bott-Morse foliation with $c(\fa)> 2
s(\fa)$:

\vglue.1in

\noindent{\bf 1 - Disconnected irreducible component of bitorus
type}. We consider an isolated saddle $N_0$ and two non-isolated
centers $N_1, N_2$ as singular set of a foliation in a solid torus
as described below. The union $\Lambda(N_0)$ of the saddle and its
separatrices is homeomorphic to a bitorus obtained by gluing
 three pieces $P_1, P_2, C$ where each $P_j$ is a torus minus a disc,
$C$ is a cylinder,   and then collapsing one of the boundary curves
of $C$ into a point $N_0$. The outside leaves are diffeomorphic to
the bitorus (see figure \ref{Figure:bitorustype}). The complement
$\Lambda(N_0)\setminus \{N_0\}$ has two connected components
$\Lambda_1, \Lambda_2$ such that $\Lambda_j \cup \{N_0\}$ is
homeomorphic to a torus pinched at the point and bounds a region
$R_j$ foliated by tori. We have a center $N_j\subset R_j$ such
that $\C(N_j,\fa)=R_j$ and $\partial \C(N_j,\fa)=\Lambda_j$. We
call this case {\it disconnected irreducible} case, because
 $\Lambda(N_0) \setminus N_0=\Lambda_1 \uplus \Lambda_2$
is the disjoint union of two components. Notice this is a special
case of Example \ref{multiple irreducible}.

\begin{figure}[ht]
\begin{center}
\includegraphics[scale=0.45]{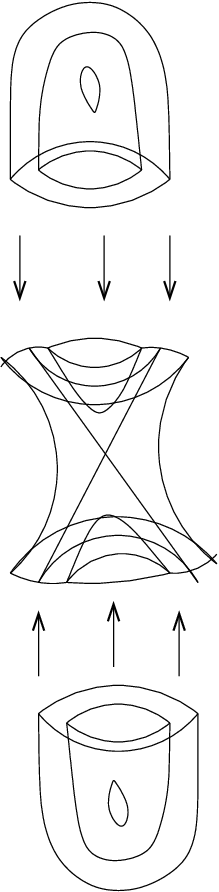}
\end{center}
 \caption{Irreducible component of bitorus type. }
 \label{Figure:bitorustype}
\end{figure}

 \vglue.1in

\noindent{\bf 2 - Torus-ball type component}. A second model is
obtained with a torus $T$ bounding a solid torus $\Omega$ in
$\mathbb R^3$. Inside $\Omega$ we foliate a neighborhood of
$\partial \Omega=T$ by concentric tori and introduce an isolated
saddle singularity $N_0$ by collapsing the boundary of a disc into
a point $N_0$ so that the resulting variety $\Lambda(N_0)$ is the
union of a $2$-sphere and a torus, both pinched at $N_0$ (see
figure~\ref{Figure:torusballtype}). Now we foliate the region
interior to the sphere by spheres and the region interior to the
torus by tori accumulating to a circle $N_1$. The exterior of
$\Omega$ can be foliated  by tori centered at a circle around a point
 $N_2$ at infinity. Notice that one of the pairings is
trivial of type (T.I).
This gives our second model of
`disconnected singularity".

\begin{figure}[ht]
\begin{center}
\includegraphics[scale=0.35]{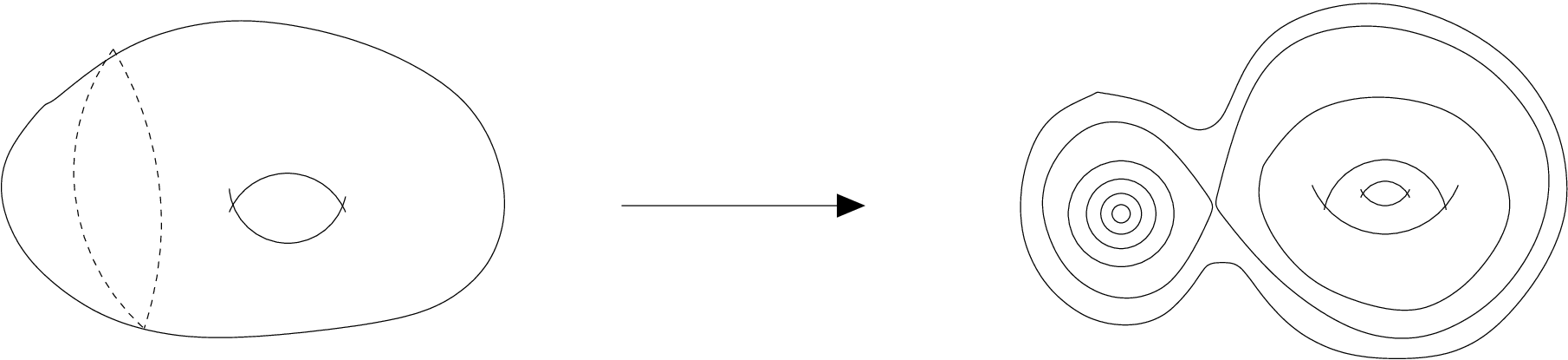}
\end{center}
 \caption{Disconnected component of torus ball type. }
 \label{Figure:torusballtype}
\end{figure}

\begin{Definition}
\label{Definition:irreducicle} {\rm We call the two pairings
constructed above {\it disconnected} pairings. (The name
indicates the fact that the closure of the union of separatrices
is disconnected if we remove
 the saddle.)}
\end{Definition}

\noindent{\bf 3 - Connected irreducible component (only one
center)}. We consider an isolated saddle $N_0$ and a non-isolated
center $N_1$ as singular set of a foliation in a solid  bitorus as
depicted below (see figure~\ref{Figure:onlyonecenter}).  We begin
with a solid torus region $\Omega\subset \mathbb R^3$ with
boundary a bitorus $\partial\Omega$ . Foliate a neighborhood of the
boundary by bitori and fix one of these bitorus;
take a meridian in one of the handles  and collapse it to a point
$N_0$. Call this singular leaf $\Lambda(N_0)$. Foliate now the
interior of the solid region bounded by $\Lambda$ by tori, in such
a way that we have a single component of the singular set, which
is a one-dimensional center $N_1$. The leaves outside $\Lambda$
are diffeomorphic to the bitorus and the inner leaves are tori
which accumulate to  $N_1$. We call this a {\it connected
irreducible} case. Notice that  $\Lambda(N_0)$ is the union of $N_0$
with all the separatrices accumulating to $N_0$ and
$\Lambda(N_0)\setminus N_0$ is connected.

\begin{figure}[ht]
\begin{center}
\includegraphics[scale=0.25]{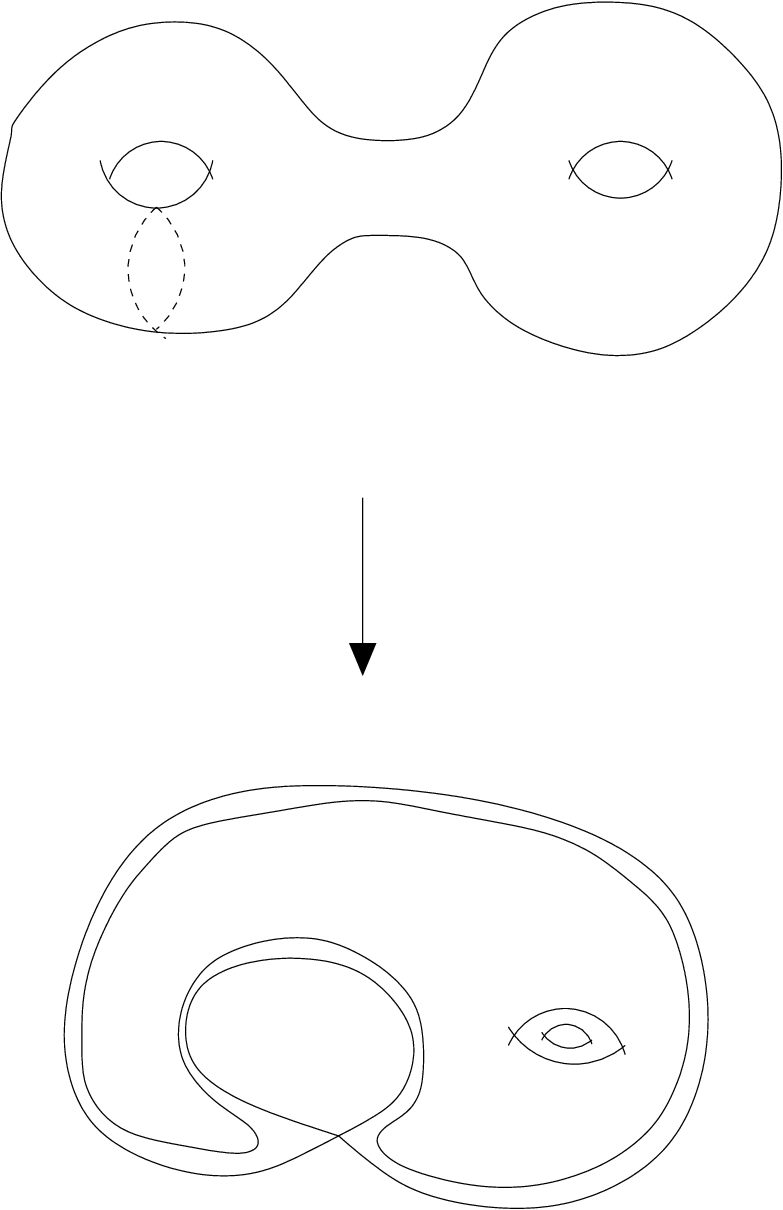}
\end{center}
 \caption{Connected irreducible component with only
 one center and original genus $g=2$. }
 \label{Figure:onlyonecenter}
\end{figure}

\vglue.1in

 \noindent{\bf 4 - Multiple irreducible connected component}. Let
$L(g)$ be an unknotted solid handle-body in $\mathbb R^3$ with
genus $g>1$,  and  foliate $L(g)$ as in Example \ref{multiple
irreducible}. We first have leaves of genus $g$, parallel to the
boundary.  Then a surface $S $ with $g-1$ saddle-points which
bounds in $g$-components, each diffeomorphic to an open solid
torus $T_j, \, j=1,...,g$. Then we foliate each $T_j$ in the usual
way, by copies of $S^1 \times S^1$, having in each torus a circle
$N_j$ as singular set, all of center-type.



\section{Bott-Morse foliations on $3$-manifolds}
\label{section:BottMorsedimension3}
We consider a closed Bott-Morse foliation $\fa$ on a 3-manifold $M$. Let
  $N_1, N_2$ be distinct center-type components of $\sing(\fa)$ such that
$\partial \mathcal C(N_1, \fa)\cap
\partial\mathcal C(N_2, \fa)\ne \emptyset$. We thus know from
Theorem~\ref{Lemma:basinopen} that  there is
exactly one saddle component $N_0$ in $\partial \mathcal C(N_1,
\fa)\cap
\partial\mathcal C(N_2, \fa)$, and
 Theorem~\ref{topology of boundary} says that each $\partial \mathcal
C(N_j,\fa)$ is the union of   $N_0$ and   separatrices of $N_0$. This implies:

\begin{Lemma} $\,$
\label{Lemma:existssaddlebdbasin} We can have one of the following
 possibilities:

{\bf i)}$\;$ The saddle $N_0$ has only one separatrix. In this
case one has $\partial \mathcal C(N_1, \fa) =
\partial\mathcal C(N_2, \fa)$.

{\bf ii)}  The saddle $N_0$ has two separatrices: then $\partial
\C(N_1,\fa)\cap
\partial \C(N_2,\fa)$ is a union of $N_0$ and separatrices of
$N_0$, and one can have:
\begin{itemize}

\item    $\partial \C(N_1,\fa)\cap
\partial \C(N_2,\fa)=N_0$.

\item   $\partial\mathcal C(N_1, \fa)\subsetneqq
\partial\mathcal C(N_2, \fa) \,$ or viceversa,
$\partial\mathcal C(N_2, \fa)\subsetneqq
\partial\mathcal C(N_1, \fa)\,,$ and $\partial\mathcal C(N_1,
\fa)\cap \partial\mathcal C(N_2, \fa) \,$ is $N_0$ union a
separatrix.

\item  $\partial\mathcal C(N_1, \fa) =
\partial\mathcal C(N_2, \fa) $.
\end{itemize}

\end{Lemma}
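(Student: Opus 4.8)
The plan is to reduce the statement to a short combinatorial case analysis driven by results already in place. First I would record the three facts that do the work. By the paragraph preceding the lemma, Theorem~\ref{Lemma:basinopen} gives that $N_0$ is the unique saddle component contained in $\partial\C(N_1,\fa)\cap\partial\C(N_2,\fa)$; part~(2) of that theorem (and the argument in its proof) gives that each $\partial\C(N_j,\fa)$ is the union of $N_0$ with a \emph{nonempty} collection of separatrices of $N_0$; and part~(i) of Theorem~\ref{topology of boundary} gives that $\Lambda(N_0)$, the union of $N_0$ and all its separatrices, consists of $N_0$ together with at most two leaves of $\fa$. Hence $N_0$ has either exactly one or exactly two global separatrices, and this dichotomy is precisely cases (i) and (ii) of the statement.

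Case (i) is immediate: if $N_0$ has a single separatrix $L$ then $\Lambda(N_0)=N_0\cup L$, so $N_0\subset\partial\C(N_j,\fa)\subset\Lambda(N_0)=N_0\cup L$, and since $\partial\C(N_j,\fa)$ contains at least one separatrix we get $\partial\C(N_j,\fa)=N_0\cup L$ for $j=1,2$; in particular $\partial\C(N_1,\fa)=\partial\C(N_2,\fa)$. By Theorem~\ref{topology of boundary}(i) this case subsumes both the situation in which the transverse Morse index of $N_0$ differs from $1$ and $m-n_0-1$ (one local separatrix) and the self-saddle-connection situation in which the two local separatrices lie on a single leaf.

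For case (ii), write the two distinct separatrices as $L_1,L_2$, so $\Lambda(N_0)=N_0\cup L_1\cup L_2$ and, by Theorem~\ref{topology of boundary}(i), the transverse Morse index of $N_0$ is $1$ or $m-n_0-1$. Each $\partial\C(N_j,\fa)$ is then $N_0$ union a nonempty subset of $\{L_1,L_2\}$, so $\partial\C(N_1,\fa)\cap\partial\C(N_2,\fa)$ is closed, invariant, contains $N_0$, and is contained in $N_0\cup L_1\cup L_2$; hence it equals $N_0$ together with the separatrices common to the two boundaries, and in particular it is a union of $N_0$ and separatrices of $N_0$. Enumerating the possible pairs $(\partial\C(N_1,\fa),\partial\C(N_2,\fa))$ then yields exactly the three listed alternatives: if the two boundaries share no separatrix then $\{\partial\C(N_1,\fa),\partial\C(N_2,\fa)\}=\{N_0\cup L_1,\,N_0\cup L_2\}$ and the intersection is $N_0$; if they share exactly one separatrix, say $L_1$, and are distinct, then up to relabelling $\partial\C(N_1,\fa)=N_0\cup L_1\subsetneqq N_0\cup L_1\cup L_2=\partial\C(N_2,\fa)$ and the intersection is $N_0\cup L_1$; in every remaining case $\partial\C(N_1,\fa)=\partial\C(N_2,\fa)$.

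The heavy structural input — that $\partial\C(N_j,\fa)$ is $N_0$ union separatrices of $N_0$, with at least one separatrix present, and that there is a single saddle in the shared boundary — is supplied by Theorem~\ref{Lemma:basinopen}, so this lemma is essentially a corollary of it. The only real care needed is in case (ii): one must keep straight that a saddle with two \emph{local} separatrices still falls under case (i) when those belong to one leaf (a self-saddle-connection), and one must check the enumeration above is exhaustive. If one wants the sharper statement that the degenerate subcase $\partial\C(N_1,\fa)=\partial\C(N_2,\fa)=N_0\cup L_1$ never occurs, the argument is to invoke the Bott-Morse normal form at a saddle of transverse index $1$ or $m-n_0-1$ in a $3$-manifold — the two local separatrices cut a distinguished neighborhood of $N_0$ into an ``outer'' region meeting both local separatrices and two ``inner'' regions each meeting only one — together with the disjointness of the nonempty open sets $\C(N_1,\fa)$ and $\C(N_2,\fa)$; but this refinement is not needed for the statement as phrased.
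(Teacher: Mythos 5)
Your argument is correct and is exactly the justification the paper intends: the lemma is stated with no explicit proof, presented as an immediate consequence of Theorem~\ref{Lemma:basinopen} (each $\partial\C(N_j,\fa)$ is the unique saddle $N_0$ union a nonempty set of its separatrices) and Theorem~\ref{topology of boundary}(i) ($\Lambda(N_0)$ contains at most two leaves), followed by the same exhaustive enumeration you carry out. Your closing remarks on the self-saddle-connection subtlety and on the possibly vacuous subcase are accurate but, as you note, not required for the statement.
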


\medskip

 We now describe the possible arrangements for $N_0, N_1,
N_2$ and the basins $\mathcal C(N_1,\fa), \mathcal C(N_2,\fa)$.
The possible cases are divided according to the following
hierarchy:
\begin{itemize}
\item[{\rm(1)}] the dimension of the saddle $N_0$,

\item[{\rm(2)}] the dimensions of the centers $N_1, N_2$, and

\item[{\rm(3)}] whether or not $\partial \C(N_1,\fa)\cap
\partial \C(N_2,\fa)$ is contained in $\sing(\fa)$, {\it i.e.}, whether or not \\
$\partial \C(N_1,\fa)\cap
\partial \C(N_2,\fa)= N_0$.
\end{itemize}

\subsection{Isolated saddle}
We assume as above that   $\fa$ is a closed Bott-Morse foliation on a closed 3-manifold $M$.
  We begin with the
case of
isolated singularities studied in \cite{Camacho-Scardua}.    The
following two results are respectively  Lemma~3 and Lemma~2 in
 \cite{Camacho-Scardua}:

\begin{Lemma}
\label{Lemma:eliminationallisolated}  If an isolated saddle
singularity $q$ is such that $q\in\partial \mathcal C(p_j,\fa)$
for two isolated centers $p_1, p_2$, then one of the pairings $q
\leftrightarrow p_j$ is trivial.
\end{Lemma}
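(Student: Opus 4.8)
The plan is to reduce the statement to a local analysis at the saddle $q$ together with a single global Euler--characteristic count. Since $\dim M=3$ and $q$ is isolated, its Morse index is $1$ or $2$; replacing the local Morse function $f$ by $-f$ (this interchanges the indices $1$ and $m-n-1=2$) we may assume the index is $1$. Then $q$ has two local separatrix branches $V_+,V_-\subset f^{-1}(0)$, and a small ball $B$ around $q$ decomposes as $B\setminus f^{-1}(0)=R_+\sqcup R_-\sqcup R_0$, where $R_\pm$ are the two ``caps'', on which the leaves of $\fa$ are $2$--discs, and $R_0$ is the connected ``throat'', on which the leaves are one-sheeted hyperboloids $\cong S^1\times\re$. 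By Corollary~\ref{Lemma:basinopen-dim3} each basin $\C(p_j,\fa)$ is an open $3$--ball foliated by concentric $2$--spheres, and by Theorem~\ref{Lemma:basinopen}(2) its frontier is $\{q\}$ together with some separatrices of $q$; in particular no regular leaf of $\fa$ lies on $\partial\C(p_j,\fa)$.

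First I would show that, near $q$, each $\C(p_j,\fa)$ is a union of whole components of $B\setminus f^{-1}(0)$: the set $\C(p_j,\fa)\cap B$ is open, is a union of plaques of $\fa\big|_B$, and avoids $f^{-1}(0)$, so a short connectedness argument (the set of levels $c\neq0$ whose plaque in $B$ lies in $\C(p_j,\fa)$ is non-empty, open and closed in its interval) forces it to contain each whole one-parameter family $R_+$, $R_-$ or $R_0$ that it meets. Hence $\partial\C(p_j,\fa)$ is, near $q$, one of $\{q\}\cup V_+$, $\{q\}\cup V_-$ or $\{q\}\cup V_+\cup V_-$; call the basin \emph{simple at $q$} in the first two cases. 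Moreover, if $R_0\subset\C(p_j,\fa)$ near $q$ then the hyperboloid plaques $f^{-1}(c)\cap B$, $0<c<\delta$, accumulate on $V_+\cup V_-$, so $V_+\cup V_-\subset\partial\C(p_j,\fa)$; thus a basin which is simple at $q$ coincides near $q$ with a single cap $R_\pm$. Since $\C(p_1,\fa)$ and $\C(p_2,\fa)$ are disjoint and each has $q$ in its frontier, inspecting the possible unions of components shows that the only way \emph{neither} basin is simple at $q$ is $\partial\C(p_1,\fa)=\partial\C(p_2,\fa)=\{q\}\cup\{\text{all separatrices of }q\}=:\Lambda$.

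I claim this remaining configuration cannot occur. By Theorem~\ref{Lemma:basinopen}(3), $\partial\C(p_1,\fa)=\partial\C(p_2,\fa)$ forces $M=\ov{\C(p_1,\fa)}\cup\ov{\C(p_2,\fa)}$, i.e. $M$ is the disjoint union $\C(p_1,\fa)\sqcup\C(p_2,\fa)\sqcup\Lambda$ of two open sets diffeomorphic to $\re^3$ and the closed set $\Lambda$. Using the compactly supported Euler characteristic, which is additive over such a decomposition and satisfies $\chi_c(\re^3)=-1$, together with $\chi_c(M)=\chi(M)=0$, we get $0=(-1)+(-1)+\chi(\Lambda)$, so $\chi(\Lambda)=2$. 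But $\Lambda$ is obtained from one or two closed orientable surfaces --- the end-compactifications $\ov L$ of its separatrix leaves --- by collapsing one or two ideal boundary circles to the single point $q$, and a direct count gives $\chi(\Lambda)=\chi(\Sigma)-1$ in the self-connected case and $\chi(\Lambda)=\chi(\Sigma_+)+\chi(\Sigma_-)-1$ in the two-separatrix case, each of which is \emph{odd} since a closed orientable surface has even Euler characteristic. This contradicts $\chi(\Lambda)=2$. Hence at least one basin, say $\C(p_1,\fa)$, is simple at $q$, so it coincides near $q$ with a single cap; in particular $\partial\C(p_1,\fa)=\{q\}\cup L$ for a single separatrix $L$, and the concentric-sphere leaves of $\C(p_1,\fa)$ meet $B$ in $2$--discs.

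Finally I would upgrade ``simple'' to ``trivial''. Applying the Partial Stability Theorem~\ref{Theorem:partialstabilitytheorem} to $\C(p_1,\fa)$, the saddle $q$ and the component $\Lambda_0=L\setminus W(q)$ of $\Lambda(q)\setminus(\Lambda(q)\cap W(q))$, an interior leaf $L_i\cong S^2$ of the basin satisfies $L_i\cap W_\alpha\cong\Lambda_0$; since $L_i\cap W_\alpha$ is $L_i$ with a single disc (the cap inside $W(q)$) removed, $\Lambda_0\cong S^2\setminus D^2\cong D^2$, whence $L\cong\re^2$ and $\ov L=L\cup\{q\}\cong S^2$. Therefore $\ov{\C(p_1,\fa)}$ is a closed $3$--ball foliated by concentric $2$--spheres, degenerating at one end to the center $p_1$ and at the other to the sphere $\ov L$ pinched at $q$: this is exactly the ``trivial isolated'' model of Example~\ref{Example:trivialfoliation}, the rotation of the planar center--saddle picture of Figure~\ref{Figure:centroseladim2trivial0} about its axis. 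Taking $R=\ov{\C(p_1,\fa)}$ with a thin regular collar attached across $L$ and performing in $R$ the center--saddle bifurcation recalled before Definition~\ref{Definition:pairing} (the family $Z_\varepsilon$) replaces $\fa\big|_R$ by a regular foliation agreeing with $\fa$ near $\partial R$; this is a foliated surgery eliminating $q$ and $p_1$, so the pairing $q\leftrightarrow p_1$ is trivial. The crux is the third paragraph --- excluding the symmetric case $\partial\C(p_1,\fa)=\partial\C(p_2,\fa)$ --- where the parity of $\chi$ of the pinched surface $\Lambda$ does the work and one must be careful that the decomposition of $M$, hence $\chi_c$--additivity, truly applies; the other delicate point is identifying $\ov{\C(p_1,\fa)}$ with the standard trivial model, i.e. checking that $L$ is a plane, which rests on the Partial Stability Theorem rather than on soft topology.
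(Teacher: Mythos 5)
Your argument is correct, and it is genuinely different from what the paper does: the paper offers no proof at all, quoting the statement as Lemma~3 of \cite{Camacho-Scardua}, whereas you give a self-contained derivation from the results of Sections 3--6. The two points worth highlighting are (i) the reduction, via the observation that near $q$ each basin must fill whole sectors $R_+,R_-,R_0$ of $B\setminus f^{-1}(0)$, to the dichotomy ``one basin is a single cap near $q$'' versus ``$\partial\C(p_1,\fa)=\partial\C(p_2,\fa)=\Lambda$'', and (ii) the parity count $0=\chi(M)=\chi_c(\C(p_1,\fa))+\chi_c(\C(p_2,\fa))+\chi(\Lambda)=-2+\chi(\Lambda)$ against the fact that a closed orientable surface with one or two points pinched to $q$ has odd Euler characteristic; the cited reference excludes the analogous configurations by a homology argument (alluded to in the proof of Lemma~\ref{Lemma:nottwononisolated}), and your $\chi_c$ computation is an attractive, purely combinatorial substitute. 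Two spots deserve more care. First, $\partial\C(p_j,\fa)$ consists of $q$ together with \emph{entire} separatrix leaves, which could a priori re-enter $B$ in finitely many plaques at nonzero levels; shrink $B$ to $\{|f|\le\delta\}$ with $\delta$ below the modulus of those levels before running the open--closed argument. Second, the concluding surgery is stated too loosely: the region $R$ must contain $q$ in its interior, so ``$\ov{\C(p_1,\fa)}$ with a thin collar across $L$'' must mean $\ov{\C(p_1,\fa)}\cup W(q)\cup N$, where $W(q)$ is a full distinguished neighborhood of $q$ and $N$ is a one-sided collar of $L\setminus W(q)$ swept out on the throat side by the pieces of the nearby hyperboloid leaves. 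With that choice the invariant part of $\partial R$ consists of two genuine discs --- on the throat side it is the union of such a piece with the annulus $L_e\cap W(q)$, hence a disc precisely because $L\setminus W(q)\cong D^2$ by your stability computation --- together with a transverse annulus, and the product foliation by discs matches the germ of $\fa$ along $\partial R$. Had one taken $\ov{\C(p_1,\fa)}\cup W(q)$ alone, $\partial R$ would contain the invariant annulus $f^{-1}(\delta)\cap B_\epsilon$ with annular plaques on the inside, and no regular extension with compact leaves exists; so the collar is doing real work and should be made explicit.
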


\begin{Lemma}
 \label{Lemma:separatrices}  If  $p
\in \Sing(\fa)$ is an isolated  center and $q$ is an isolated saddle
contained in $\partial \mathcal C(p,\fa)$, then
 we have the following possibilities:

\begin{itemize}
\item[{\rm(i)}] If $\po \mathcal C(p,\fa)\setminus \{q\}$ is
connected, then:
\begin{itemize}
\item [{\rm (a)}] either $\po \mathcal C(p,\fa)$ is homeomorphic
to a sphere $S^2$ with a pinch at $q$ {\rm(}a tear drop{\rm)} and
the pairing
 $q \leftrightarrow p$  is trivial; or

\item[{\rm (b)}] $\po \mathcal C(p,\fa)$ is homeomorphic to a
pinched torus, obtained from a torus $S^1 \times S^1$ by
collapsing a meridian to a point.
\end{itemize}

\item[{\rm(ii)}] If $\po\mathcal C(p,\fa)\setminus \{q\}$ has two
connected components, then  $\po \mathcal C(p,\fa)$ is the union
of two spheres $S^2$ which meet at $q$.
\end{itemize}
\end{Lemma}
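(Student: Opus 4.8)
The plan is to pull everything back to the local Morse normal form at the isolated saddle $q$ (here $m=3$). Since the transverse Morse index is unchanged when we reverse the transverse orientation, we may assume $q$ has index $1$, so that in suitable coordinates on a small ball $W(q)$ around $q$ we have $f=-x_1^2+x_2^2+x_3^2$; the local separatrix cone $\{x_1^2=x_2^2+x_3^2\}$ has two nappes $V^{+}=\{x_1=\sqrt{x_2^2+x_3^2}\}$ and $V^{-}$, so $q$ has exactly two local separatrices, while the level sets $f^{-1}(t)\cap W(q)$ are two disjoint discs (one on each side of the cone) for $t<0$ and a single annulus for $t>0$. By hypothesis $q\in\partial\C(p,\fa)$, so $\partial\C(p,\fa)\ne\emptyset$ and Corollary~\ref{Lemma:basinopen-dim3} applies: every leaf in $\C(p,\fa)$ is a $2$-sphere and $\mathrm{int}\,\C(p,\fa)$ is an open $3$-ball; moreover by Theorem~\ref{Lemma:basinopen}(2), $q$ is the \emph{only} saddle of $\fa$ in $\partial\C(p,\fa)$ and $\partial\C(p,\fa)=\{q\}\cup(\text{separatrices of }q)$. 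Hence $\partial\C(p,\fa)\setminus\{q\}$ is a union of one or two separatrix leaves of $q$, and near $q$ it contains one or both of the nappes $V^{\pm}$.

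Next I would pin down the topology. Write $\Sigma=\partial\C(p,\fa)\setminus W(q)$. By the Partial Stability Theorem~\ref{Theorem:partialstabilitytheorem} (through Proposition~\ref{Proposition:trivialneighborhood}), every leaf $L$ of $\fa$ close enough to $\partial\C(p,\fa)$ satisfies $L\setminus W(q)\cong\Sigma$. Applying this to interior leaves $L_i\subset\C(p,\fa)$, which are $2$-spheres, gives $\Sigma\cong S^2\setminus(L_i\cap W(q))$, where $L_i\cap W(q)$ — a union of components of a level set of $f$ in $W(q)$ — is one of: a single disc, two disjoint discs, or an annulus. Since all these complements must be homeomorphic to the same $\Sigma$, the interior leaves approaching $\partial\C(p,\fa)$ all have the same local type in $W(q)$; in particular the ``mixed'' configurations in which the basin accumulates on $\partial\C(p,\fa)$ both from the $t>0$ (one-sheet) side and from a $t<0$ sheet are excluded, as they would force $\Sigma$ to be simultaneously of two non-homeomorphic model types. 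I then split according to whether $\partial\C(p,\fa)\cap W(q)$ is one nappe or both.

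If $\partial\C(p,\fa)\cap W(q)$ is a single nappe, say $V^{+}$, then the basin cannot accumulate on the one-sheet region: a one-sheet leaf accumulates on the whole cone, hence on $V^{-}\setminus\{q\}$, which would then lie in $\partial\C(p,\fa)$, a contradiction. Thus $\C(p,\fa)\cap W(q)$ is the solid-cone component $\{x_1>\sqrt{x_2^2+x_3^2}\}$ of $W(q)\setminus V^{+}$, each $L_i\cap W(q)$ is a single disc, $\Sigma\cong D^2$, and $\partial\C(p,\fa)$ is obtained by gluing this disc to the nappe $V^{+}$ (a disc with conical apex $q$) along their common boundary circle: a $2$-sphere with one cone point at $q$ — a teardrop. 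This is case (i)(a). If instead $\partial\C(p,\fa)\cap W(q)$ is the full cone, then, excluding the mixed cases as above, either $\C(p,\fa)$ accumulates only from the two $t<0$ sheets — so $L_i\cap W(q)$ is two disjoint discs, $\Sigma$ is an annulus, $\partial\C(p,\fa)\setminus\{q\}$ is a single leaf, and capping each boundary circle of the annulus with a nappe-disc while identifying the two apexes to $q$ produces $S^2$ with two points identified, i.e. a torus with a meridian collapsed (case (i)(b)) — or it accumulates only from the one-sheet side — so $L_i\cap W(q)$ is an annulus, $\Sigma$ is two disjoint discs, $\partial\C(p,\fa)\setminus\{q\}$ is two leaves, and capping each disc with its nappe yields two $2$-spheres meeting exactly at the common apex $q$ (case (ii)). This exhausts all possibilities.

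It remains to prove that in case (i)(a) the pairing $q\leftrightarrow p$ is trivial. Here $\overline{\C(p,\fa)}$ is a $3$-ball pinched at $q$, foliated by spheres around the center $p$ collapsing onto the teardrop; adjoining a thin saturated collar on the far side of $V^{+}$ yields a region $R$ on which $\fa$ is foliated-diffeomorphic to the standard trivial isolated pairing of Example~\ref{Example:trivialfoliation} (the rotation of Figure~\ref{Figure:centroseladim2trivial0} about the axis through $p$ and $q$), and reversing that construction is a foliated surgery replacing $\fa|_R$ by a regular foliation, so the pairing is trivial. I expect this last step to be the main obstacle: the identification of $\partial\C(p,\fa)$ in the earlier steps is bookkeeping with the Morse model, whereas making precise the foliated-diffeomorphism $R\cong(\text{standard model})$ — and hence the surgery — requires controlling $\fa$ on a full neighborhood of the teardrop, in particular the ``outside'' leaves produced by the fiberwise Milnor-Wallace surgery of Theorem~\ref{topology of boundary}.
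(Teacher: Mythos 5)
The paper offers no proof of this lemma: it is quoted verbatim as Lemma~2 of \cite{Camacho-Scardua}, so there is no in-paper argument to measure yours against. Your strategy --- reduce to the index-one Morse model at $q$, invoke Theorem~\ref{Lemma:basinopen} and Corollary~\ref{Lemma:basinopen-dim3} to know that interior leaves are $2$-spheres and that $\partial\mathcal C(p,\fa)$ is $q$ together with separatrices of $q$, then use Theorem~\ref{Theorem:partialstabilitytheorem} to identify $\partial\mathcal C(p,\fa)\setminus W(q)$ with $L_i\setminus W(q)$ and glue the nappes back in --- is precisely the method the authors use for the analogous pairing lemmas they do prove (e.g.\ Lemmas~\ref{Lemma:someisnotisolated} and~\ref{Lemma:solidtorus}), and your identification of the three topological types of $\partial\mathcal C(p,\fa)$ is correct.

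Two points need repair. First, your exclusion of the mixed local configurations (basin meeting both the one-sheet region and a two-sheet region of $W(q)\setminus V$) is justified by saying $\Sigma$ ``would be simultaneously of two non-homeomorphic model types.'' That is not what happens: a single interior leaf would then have $L_i\cap W(q)$ equal to a disc plus an annulus, and $L_i\setminus W(q)$ is one perfectly well-defined surface, so no contradiction appears at the level you invoke. The contradiction must be extracted differently: for instance, basin leaves are nested spheres bounding balls around $p$, so the region between any two of them lies in $\mathcal C(p,\fa)$ and the basin cannot approach a separatrix leaf from both sides, which is exactly what meeting adjacent regions would force; alternatively, count boundary circles of $\Sigma$ against the two available nappe circles. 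Second, and more substantially, the triviality of the pairing in case (i)(a) is part of the statement but is only sketched, as you yourself flag. Note also that the collar you adjoin cannot be ``saturated'': any saturated set containing an exterior leaf contains all of it, and that leaf continues past $V^-$ into the second separatrix of $q$. What is required is a compact, non-invariant region $R$ (say a small closed neighborhood $B$ of $\overline{\mathcal C(p,\fa)}$) together with the verification that the traces $L_e\cap B$ of exterior leaves are discs --- a disc coming from partial stability along $L_1\setminus W(q)$ glued to the annular plaque in the one-sheet region --- fitting into a product structure near $\partial B$, so that $\fa\big|_B$ can be replaced by a trivial foliation by discs in the sense of Definition~\ref{Definition:surgery}. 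Until that is carried out, the pairing claim in (i)(a) remains unproved.
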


An example of type (i.a) is  the basin of the center $p_2$ in
Figure~\ref{Figure:isolated2}, while the basin of $p_1$ in that
same picture illustrates the case  (i.b). An example of type (ii)
is given in figure 12, corresponding to the pairing NT.I.1.




\medskip

Next we have the case of   an isolated saddle paired
with two non-isolated centers.

\begin{Lemma}[$N_0=\{q\}$, two non-isolated centers.]
~\label{Lemma:someisnotisolated}~\label{Lemma:someisnotisolatednonsingular}
Let $N_1, N_2$ be one-dimensional center-type components and $N_0$
an isolated saddle such that $N_0 \subset \partial \C(N_1,\fa)
\cap
\partial \C(N_2,\fa)$.
Assume further that $\fa$ satisfies the inequality $c(\fa) > 2
s(\fa)$. Then $N_0, N_1, N_2$ are in a same disconnected
 component of $\fa$ (as in
Definition~\ref{Definition:irreducicle}).  This is of bitorus type if $N_0 =
\partial \C(N_1,\fa) \cap \partial \C(N_2,\fa)$, or of torus-ball
type if otherwise.
\end{Lemma}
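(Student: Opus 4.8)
The plan is to exploit the already-established structure theory: Theorem~\ref{Lemma:basinopen} tells us $N_0$ is the unique saddle in $\partial\C(N_1,\fa)\cap\partial\C(N_2,\fa)$ and that this intersection is a union of $N_0$ with separatrices of $N_0$; Corollary~\ref{Lemma:basinopen-dim3} tells us that each basin $\C(N_j,\fa)$ is either a $3$-ball (if $N_j$ is an isolated center) or a solid torus $S^1\times D^2$ (if $N_j$ is a circle), and here both centers are circles, so each $\overline{\C(N_j,\fa)}$ is a solid torus with boundary contained in $\Lambda(N_0)=N_0\cup(\text{separatrices})$. Since $N_0=\{q\}$ is an \emph{isolated} saddle in a $3$-manifold, its Morse index is $1$ or $2$, so $\Lambda(N_0)\setminus\{q\}$ has either one or two local branches; by Lemma~\ref{Lemma:existssaddlebdbasin} the relevant possibilities for $\partial\C(N_1,\fa)\cap\partial\C(N_2,\fa)$ are exactly ``$=\{q\}$'' or ``$=\{q\}\cup(\text{one separatrix})$'' (the case where the two boundaries coincide would force $N_1$ and $N_2$ into the \emph{same} basin, impossible since $N_1\ne N_2$ are distinct center components with, a priori, distinct basins — more precisely, if $\partial\C(N_1,\fa)=\partial\C(N_2,\fa)$ then by Theorem~\ref{Lemma:basinopen}(3) $M=\overline{\C(N_1,\fa)}\cup\overline{\C(N_2,\fa)}$, and one checks this contradicts $c(\fa)>2s(\fa)$ since it would use up both centers against the single saddle leaving no room, but I must double-check: actually with $c(\fa)>2s(\fa)$ and $s(\fa)=$ (at least) $1$ we need $c(\fa)\ge 3$, so there is a \emph{third} center $N_3$ with a nonempty-boundary basin whose boundary must contain a saddle, necessarily $N_0$ again, and then $N_0$ is paired with three centers, which pushes us into one of the two models anyway).

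First I would pin down the local picture at $q$: in Morse coordinates $\Lambda(N_0)$ near $q$ is the cone $\{x_1^2=x_2^2+x_3^2\}$ (index $1$, two branches) or $\{x_1^2+x_2^2=x_3^2\}$ (index $2$, which in $\mathbb R^3$ also has two branches) — in either case two local branches, and the ``outside'' leaves are hyperboloids of one sheet while the ``inside'' leaves on each branch are hyperboloids of two sheets (topologically discs locally). So $\Lambda(N_0)\setminus\{q\}$ has at most two components. Next, glue this local data to the global basins: $\overline{\C(N_1,\fa)}$ and $\overline{\C(N_2,\fa)}$ are solid tori, and $\partial\overline{\C(N_j,\fa)}\subset\Lambda(N_0)$ is a pinched surface — by Lemma~\ref{Lemma:separatrices} (applied with the non-isolated center replaced by its circle; the argument there is local-to-global and only uses that the center's basin is a disc-bundle over the center, here a solid torus, so $\partial\C(N_j,\fa)$ is a sphere-bundle over $S^1$ pinched along a fiber... wait — here $N_j$ is a circle, the nearby leaves are tori $S^1\times S^1$, so $\partial\C(N_j,\fa)$ is a torus pinched at... no, pinched along the whole singular circle or at a single point $q$?). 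The key computation is: near $q$, $\partial\C(N_j,\fa)$ is the closure of a ``hyperboloid-of-two-sheets'' branch, i.e.\ a disc, while near $N_j$ it is a torus with one point collapsed; these must match up, so $\partial\C(N_j,\fa)$ is a torus with one point pinched to $q$ (a ``pinched torus''), and $\overline{\C(N_j,\fa)}$ is a solid torus with one boundary point pinched. This is precisely the building block of the disconnected bitorus model (Figure~\ref{Figure:bitorustype}) and the torus-ball model (Figure~\ref{Figure:torusballtype}).

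Then I would split into the two cases of Lemma~\ref{Lemma:existssaddlebdbasin}. \textbf{Case (a): $N_0=\partial\C(N_1,\fa)\cap\partial\C(N_2,\fa)$.} Then the two branches of $\Lambda(N_0)\setminus\{q\}$ are $\Lambda_1:=\partial\C(N_1,\fa)\setminus\{q\}$ and $\Lambda_2:=\partial\C(N_2,\fa)\setminus\{q\}$, each bounding a solid torus foliated by tori around $N_j$; the exterior (one-sheet hyperboloid) leaves are genus-$2$ surfaces, i.e.\ bitori; this is exactly the bitorus disconnected component, and I must verify the third center $N_3$ with its saddle-free basin sits ``outside'' and closes the manifold up consistently — using $c(\fa)>2s(\fa)$ to rule out further saddles intruding, and Theorem~\ref{topology of boundary} to identify the exterior leaf type. \textbf{Case (b): $\partial\C(N_1,\fa)\subsetneq\partial\C(N_2,\fa)$}, so one branch is shared. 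Here one basin ``contains the other's pinched sphere inside it'': $\Lambda(N_0)$ is a $2$-sphere and a torus meeting at $q$; the sphere bounds (on one side) a ball foliated by spheres shrinking to an isolated center — but wait, both $N_1,N_2$ are circles by hypothesis, so actually the region inside the $2$-sphere must be foliated by spheres accumulating to... this forces reconsideration: the $2$-sphere-pinched-at-$q$ bounds a ball, and inside that ball the foliation by $2$-spheres must have a center, which would be \emph{isolated}, contradicting that $N_1,N_2$ are both circles — unless the $2$-sphere is the boundary of one basin and a \emph{torus} (around a circle center) is the boundary of the other, which is exactly the torus-ball configuration of Figure~\ref{Figure:torusballtype} where the ``ball side'' is actually the complement $M\setminus\Omega$ foliated by tori around a circle $N_2$ at infinity, \emph{not} by spheres. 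I would therefore show: the shared separatrix branch forces $\Lambda(N_0)=$ (pinched sphere) $\cup$ (pinched torus), the pinched torus bounds $\overline{\C(N_1,\fa)}$ (solid torus, tori shrinking to $N_1$), and the pinched sphere is the common boundary that, together with $\C(N_2,\fa)$ on the far side being a solid torus around $N_2$, reconstitutes the torus-ball model. The third-center bookkeeping via $c(\fa)>2s(\fa)$ again ensures no extra saddles appear, so $N_0,N_1,N_2$ genuinely form one disconnected component.

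\textbf{Main obstacle.} The hard part will be the global matching in Case (b): showing that a ``pinched $2$-sphere'' separatrix branch of an isolated saddle, when it is the \emph{common} boundary of two basins of \emph{circle}-type centers, can only be realized as the torus-ball model and not some other configuration — this requires ruling out that the sphere-side region is foliated in a way incompatible with having a circle center, and invoking Theorem~\ref{Theorem D} (the genus-$1$ Heegaard classification) to identify the solid-torus pieces, plus a careful count using $c(\fa)>2s(\fa)$ to show the ``rest'' of $M$ (beyond the region $R$ containing $N_0,N_1,N_2$) contributes no saddle to $\partial R$, so that $R$ is genuinely a foliated solid region with the claimed singular set. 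A secondary technical point is checking that Lemma~\ref{Lemma:separatrices}, stated for isolated centers, transfers verbatim to the circle-center situation by replacing ``ball'' with ``solid torus'' throughout — I would state this as an immediate corollary of the disc-bundle structure of basins (Theorem~\ref{Lemma:basinopen}(2)) rather than reproving it.
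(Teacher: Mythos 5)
Your Case (a) (bitorus) and your exclusion of the configuration $\partial\C(N_1,\fa)=\partial\C(N_2,\fa)$ are essentially sound; for the latter you use Theorem~\ref{Lemma:basinopen}(3) plus the count $c(\fa)>2s(\fa)$, whereas the paper rules out the corresponding placement of the basins by showing the leaves of $\C(N_2,\fa)$ would have to be bitori while Corollary~\ref{Lemma:basinopen-dim3} forces them to be tori --- both routes work. The genuine gap is in Case (b), the torus-ball case, and it stems from a false premise: you argue that an isolated center inside the pinched $2$-sphere ``would contradict that $N_1,N_2$ are both circles.'' It would not. The hypothesis constrains only the two named components; the torus-ball model of \S\ref{subsection:irreduciblepairingsdim3} \emph{explicitly contains} a third, isolated center inside the pinched sphere (that is the ``ball'' in ``torus-ball,'' paired trivially with $N_0$ as a (T.I) pairing). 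Having manufactured a contradiction that isn't there, you resolve it by declaring the sphere-side region to be ``the complement $M\setminus\Omega$ foliated by tori around $N_2$, not by spheres,'' which is the wrong geometry: in the correct configuration $\C(N_2,\fa)$ sits \emph{outside} $\Lambda(N_0)$ (its trace in a distinguished neighborhood of $q$ lies in the one-sheeted-hyperboloid region), $\partial\C(N_2,\fa)$ is \emph{all} of $\Lambda(N_0)$, the part of the boundary shared with $\partial\C(N_1,\fa)$ is the pinched \emph{torus} (not the pinched sphere), and the region enclosed by the pinched sphere is foliated by $2$-spheres.

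Concretely, what is missing is the argument that actually produces the sphere. With $\C(N_1,\fa)\cap U\subset R_1$ (a two-sheet region) and $\C(N_2,\fa)\cap U\subset R_3$ (the one-sheet region), one first shows $\Lambda(N_0)\setminus\{q\}$ cannot be connected: otherwise Theorem~\ref{Theorem:partialstabilitytheorem} would make $L_1\setminus(L_1\cap U)$ (a torus minus a disc) homeomorphic to $L_2\setminus(L_2\cap U)$ (a torus minus an annulus), which is absurd. So $\Lambda(N_0)\setminus\{q\}=\Lambda_1\uplus\Lambda_2$ with $\partial\C(N_1,\fa)=N_0\cup\Lambda_1$. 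Then for $L_2\subset\C(N_2,\fa)$ one has $L_2=L_2^1\cup(L_2\cap U)\cup L_2^2$ with $L_2\cap U$ a cylinder and $L_2^1\cong\Lambda_1\setminus U$ a torus minus a disc; since $L_2$ is a torus, an Euler-characteristic count forces $L_2^2$ to be a disc, hence $\Lambda_2$ closes up to a $2$-sphere pinched at $q$ and, by stability again, the leaves just inside it are $2$-spheres. This computation is the heart of the torus-ball case and does not appear in your plan; as written, your ``main obstacle'' paragraph would have you trying to rule out a configuration that is in fact the correct answer.
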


\begin{proof}
  Let $U = U(N_0)$ be a distinguished neighborhood of
$N_0$. By Corollary~\ref{Lemma:basinopen-dim3} each leaf $L\subset
 \mathcal \C(N_j,\fa), \, j=1,2$, is a torus and the interior of
 the basin $\mathcal C(N_j,\fa)$ is a solid torus. The local separatrices of
the saddle $N_0$ divide the neighborhood $U$ into three open
regions  $\buildrel{\circ}\over{R}_1,
\buildrel{\circ}\over{R}_2, \buildrel{\circ}\over{R}_3$ as in
Figure~\ref{Figure:holonomy}, glued along the separatrices.
    For each $j = 1,2,3$, denote by $R_j$
the topological closure of $\buildrel{\circ}\over{R}_j$ in $U$.

\medskip

Let us now assume that $\partial \C(N_1,\fa)\cap
\partial \C(N_2,\fa)= N_0$.  Then, up to reordering the regions $R_1$ and
$R_2$, we have  $\partial \C(N_j,\fa)\cap U  \subset R_j$ for
$j=1,2$. In this case, given  leaves $L_j\in \C(N_j,\fa)$ for
$j=1,2$ we have that $L_j\setminus (L_j\cap U )$ is a torus minus a
 disc. This shows, by Theorem~\ref{Theorem:partialstabilitytheorem},
 that a leaf $L$ of $\fa$ with $L\cap U \subset R_3$ is the union
 of two tori minus a disc in each, and a cylinder which corresponds to the
 hyperboloid leaf on $R_3$; hence $L$ is a bitorus. This shows the existence
 of an invariant
 solid bitorus $B\subset M$ containing a neighborhood of $N_0\cup N_1 \cup
 N_3$, proving the lemma in this case.

\medskip

Let us now assume that $\partial \C(N_1,\fa)\cap \partial
\C(N_2,\fa)$ contains also some leaf of $\fa$. By
Lemma~\ref{Lemma:existssaddlebdbasin}, $\partial \C(N_1,\fa)\cap
\partial \C(N_2,\fa)$ is union of $N_0$ and separatrices of $N_0$.
Up to reordering the regions $R_1$ and $R_2$, we have two
possibilities: either $\C(N_1,\fa)\cap U  \subset R_1$ and
$\C(N_2,\fa)\cap U  \subset R_3$, or else  $\C(N_1,\fa)\cap U \cap R_j\ne
\emptyset$ for $j=1,2$ and $\C(N_2,\fa)\cap U \subset R_3$.

\medskip

\noindent{\bf Case 1}.  Assume $\C(N_1,\fa)\cap U  \subset R_1$ and
$\C(N_2,\fa)\cap U  \subset R_3$. Then given a leaf
$L_1\in \C(N_1,\fa)$ we have that $L_1\setminus(L_1\cap U )$ is a
torus minus a disc; and given a leaf $L_2\subset \C(N_2,\fa)$ we
have that $L_2 \setminus (L_2 \cap U )$ is a torus minus a cylinder,
neighborhood of a curve that bounds a disc.
Notice that if $\Lambda(N_0)$ is the union of $N_0$ with all the
separatrices accumulating to $N_0$, so $\Lambda(N_0)\setminus
N_0$ may have one or two connected components.

Suppose  first that $\Lambda(N_0)\setminus
N_0$ is
connected. Then, since the regions $R_1$ and $R_3$ are adjacent,
Theorem~\ref{Theorem:partialstabilitytheorem} implies that
$L_1\setminus (L_1\cap U(N_0)) $ and $L_2\setminus (L_2 \cap
U(N_0))$ are homeomorphic what is absurd, so this case cannot
occur. Thence $\Lambda(N_0)\setminus N_0$  has two connected components $ \Lambda_1 \cup
\Lambda_2$,    say  with $\partial
\C(N_1,\fa)\cap \Lambda_1\ne \emptyset$.

We have $\partial
\C(N_1,\fa)=\Lambda_1 \cup N_0$ and
Theorem~\ref{Theorem:partialstabilitytheorem} implies that for a
leaf $L_2\subset \C(N_2,\fa)$ we have that $L_2\setminus U(N_0)$ has
two connected components $L_2 ^1 $ and $L_2 ^2$ with $L_2 ^1$
close to $\Lambda_1$ in the sense of
Proposition~\ref{Proposition:trivialneighborhood}. Therefore
$L_2^1$ is a torus minus a disc and the same holds for
$\Lambda_1\setminus (\Lambda_1\cap U(N_0))$.

By the local
description of $\fa$ in $U(N_0)$ we have that $\Lambda_1$ is
homeomorphic to a torus pinched at a point. By
Proposition~\ref{Proposition:trivialneighborhood},  given
$L_2\subset \C(N_2,\fa)$ we have that  the component $L_2 ^1$ is a torus minus a
disc. By the local form of $\fa$ in $U(N_0)$ we have that $L_2 \cap U(N_0)$ is
a cylinder and, since $L_2$ is a torus and $L_2^1$ is a torus
minus a disc, we have that $L_2^2$  is a disc.

A leaf $L$
contained in the interior of the region bounded by $\Lambda(N_0)$
and close enough to $\Lambda_2$ must be a 2-sphere (it is the
union of two discs, one given by
Theorem~\ref{Theorem:partialstabilitytheorem} and the
homeomorphism with $L_2 ^2$, and the other given by the local type
of the leaves of $\fa$ in the region $R_2$), proving the lemma in this case.

\medskip
\noindent{\bf Case 2}.  We have $\C(N_1,\fa)\cap U \cap R_j\ne
\emptyset$ for $j=1,2$ and $\C(N_2,\fa)\cap U \subset R_3$. It is
then clear that all the separatrices of $N_0$ are contained in
 $\partial \C(N_1,\fa)\cap \partial\C(N_2,\fa)$. Since there are no
 saddle connections we have that $\partial
\C(N_1,\fa)\cap \partial\C(N_2,\fa)=\Lambda(N_0)$ is the union of
$N_0$ and all the separatrices of $N_0$.  And because $\C(N_1,\fa)\cap
R_j\ne \emptyset$ for $j=1,2$ we have that $\Lambda(N_0)\setminus
N_0$ is connected.

Notice that given a leaf $L_1\in \C(N_1,\fa)$ we have
that $L_1\setminus(L_1\cap U )$ is connected and equal to a torus
minus two discs.  Thus $L_1\setminus (L_1\cap U )$ is a cylinder
with a single handle. Given a leaf $L_2\subset \C(N_2,\fa)$ we
have that the intersection  $L_2 \cap U $ is a cylinder.
Theorem~\ref{Theorem:partialstabilitytheorem} implies that
$L_1\setminus (L_1\cap U ) $ and $L_2\setminus (L_2 \cap U )$ are
homeomorphic to $\Lambda(N_0)\setminus (\Lambda(N_0)\cap U )$.
This and the local description of $\fa$ in $U $ show that the
leaves $L_2\subset \C(N_2,\fa)$ are  homeomorphic to the union of
a torus minus two discs with a cylinder $S^1 \times [0,1]$, so
$L_2$ is a bitorus. On the other hand
Corollary~\ref{Lemma:basinopen-dim3} implies that the leaves
$L_2\subset \partial \C(N_2,\fa)$ must be tori, hence this case
cannot occur.
  \end{proof}

\begin{Remark}
\label{Remark:irreduciblecase} {\rm Consider the above setting
with the irreducible component  being of bitorus type. Let $B$ be
the invariant bitorus  in that proof and let $\Omega\subset M$
 be defined as the union of $B$
  and all leaves $L$ homeomorphic to the
 bitorus and such that $L$ bounds a region $R(L)$ containing
 $N_0\cup N_1 \cup N_2$. By the above arguments, $\Omega$
 contains a neighborhood of $N_0\cup N_1\cup N_2$ and all the
 separatrices of $N_0$. We claim that its closure is $\ov\Omega=\Omega\cup
 \{N_0^\prime\}$ for another saddle $N_0^\prime$. Indeed, by the
 local triviality of $\fa$, in a neighborhood of a compact leaf
 $\Omega$ is open in $M\setminus \sing(\fa)$. By  Corollary~\ref{Lemma:basinopen-dim3}
 every leaf in a neighborhood of a center component is a torus or
 a sphere, therefore $\partial \Omega$ contains no center. Thus
 $\partial \Omega$  is  either empty or it contains a saddle. If $\partial \Omega=\emptyset$ then $M=\Omega$
and we have that $\sing(\fa)$ consists of two centers and one saddle,
contradicting the hypothesis $c(\fa) > 2 s(\fa)$. Therefore
we must have another saddle  in $
\partial \Omega$, and  $s(\fa) \geq 2$. This remark will be
 used in the proof of Theorem~\ref{Theorem:onlywhatweneed}.}
\end{Remark}

\begin{Lemma}[$N_0=\{q\}$, centers of mixed dimensions]
\label{Lemma:nottwononisolated} Let $N_0$ be  an isolated saddle
paired with $N_1, N_2$, where $N_1$ is isolated and $N_2$ is
non-isolated. Then either $N_0 \leftrightarrow N_1$ is a trivial
pairing or  it is a non-trivial pairing as in
Figure~\ref{Figure:isolated1} {\rm(}left picture{\rm)}. In the
second case we can choose an invariant region $R$ containing $N_0$
and $N_1$, diffeomorphic to a solid torus, where we can replace
$\fa\big|_R$ by a trivial foliation by  tori around  a
non-isolated center.
\end{Lemma}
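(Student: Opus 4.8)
The plan is to run the same kind of local-to-global analysis used in Lemma~\ref{Lemma:someisnotisolated}, but now taking into account that one center is isolated and the other is not. First I would fix a distinguished neighborhood $U = U(N_0)$ of the isolated saddle $N_0$; by Corollary~\ref{Lemma:basinopen-dim3} the nearby leaves around $N_1$ are $2$-spheres and the interior of $\C(N_1,\fa)$ is a $3$-ball, while the nearby leaves around $N_2$ are tori and the interior of $\C(N_2,\fa)$ is a solid torus. The local separatrices of $N_0$ split $U$ into three regions $\buildrel{\circ}\over{R}_1,\buildrel{\circ}\over{R}_2,\buildrel{\circ}\over{R}_3$ as in Figure~\ref{Figure:holonomy}, with $R_3$ the ``hyperboloid-of-one-sheet'' side and $R_1,R_2$ the two ``two-sheet'' sides. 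The Morse index of $N_0$ must be $1$ or $m-n_0-1 = 2$ here (otherwise $N_0$ has a single connected separatrix and, by Theorem~\ref{Lemma:basinopen}, $\po\C(N_1,\fa) = \po\C(N_2,\fa)$, which already forces the trivial-pairing discussion); so $N_0$ does have two local separatrices.

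Next I would split into the two combinatorial cases for how $\C(N_1,\fa)$ and $\C(N_2,\fa)$ meet the regions $R_j$, exactly as in the proof of Lemma~\ref{Lemma:someisnotisolated}. Since a leaf $L_1 \in \C(N_1,\fa)$ near $\po\C(N_1,\fa)$ is a $2$-sphere, the trace $L_1 \setminus (L_1 \cap U)$ is a sphere minus one or two discs, hence a disc or a cylinder; whereas a leaf $L_2 \in \C(N_2,\fa)$ is a torus, so $L_2 \setminus (L_2 \cap U)$ is a torus minus discs/cylinders. Now I apply the Partial Stability Theorem~\ref{Theorem:partialstabilitytheorem}: if $\C(N_1,\fa)$ and $\C(N_2,\fa)$ were to sit on \emph{adjacent} regions (one on $R_3$, the other on $R_1$ or $R_2$) with $\Lambda(N_0)\setminus N_0$ connected, then the corresponding traces would have to be homeomorphic — impossible since one trace is planar (genus $0$) and any trace from $\C(N_2,\fa)$ that is not a disc has positive genus. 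This rules out the ``entangled'' configuration and forces $N_0 \leftrightarrow N_1$ to be one of the two isolated pairings classified in \cite{Camacho-Scardua}: either $\po\C(N_1,\fa)$ is a pinched sphere (tear drop) and the pairing is trivial of type $(T.I)$, or $\po\C(N_1,\fa)$ is the union of two $2$-spheres meeting at $N_0$ — and the latter is precisely the picture NT.I.1 in Figure~\ref{Figure:isolated1} (left). In the tear-drop case Lemma~\ref{Lemma:separatrices}(i.a) directly gives triviality. In the NT.I.1 case $N_0$ has two separatrices, each bounding (on the side away from $N_2$) a $3$-ball foliated by spheres, and together with $N_1$'s basin they fill out an invariant solid torus $R$: inside $R$ we see two spheres-with-a-disc-removed glued by the hyperboloid cylinder, i.e.\ tori, so $R$ is foliated by tori with a single center component, and we may replace $\fa|_R$ by the standard concentric-tori foliation, which is what the statement asserts.

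The main obstacle, as in Lemma~\ref{Lemma:someisnotisolated}, will be the careful bookkeeping of which connected components of $L_e \setminus (L_e \cap U)$ and of $\Lambda(N_0) \setminus (\Lambda(N_0)\cap U)$ are ``close'' to which separatrix in the sense of Proposition~\ref{Proposition:trivialneighborhood}, and then checking that the genus/Euler-characteristic constraints coming from ``$L_1$ is a sphere, $L_2$ is a torus'' genuinely exclude every configuration except the two claimed ones. The point that needs the hypothesis implicitly is that here we only have \emph{two} centers paired with $N_0$ and there is no third center hiding behind a separatrix, so the region $R$ we build really does close up into a solid torus without extra singular components inside it; I would check this exactly as in Remark~\ref{Remark:irreduciblecase}, noting that $\po R$ contains no center (nearby leaves would be spheres or tori, contradicting that $\po R$ consists of separatrices) and that the foliated surgery replacing $\fa|_R$ by concentric tori is the trivial pairing $(T.I)$ after the replacement, as claimed.
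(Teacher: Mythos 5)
Your setup (distinguished neighborhood of $N_0$, the three local regions $R_1,R_2,R_3$, comparison of traces via Theorem~\ref{Theorem:partialstabilitytheorem}) is the right toolkit, but the case analysis goes wrong at the decisive step and you end up in the configuration that must be \emph{excluded}, not the one the lemma asserts. From Lemma~\ref{Lemma:separatrices} you keep only cases (i.a) (tear drop, trivial) and (ii) (two spheres meeting at $N_0$, each lobe a ball of spherical leaves), and you drop case (i.b), the pinched torus. But (i.b) is precisely the non-trivial case that occurs here: the spherical leaves of $\C(N_1,\fa)$ enter $W(N_0)$ through \emph{both} two-sheet regions $R_1$ and $R_2$ (a self-saddle-connection), so $L_1\setminus W(N_0)$, and hence $\partial\C(N_1,\fa)\setminus W(N_0)$, is a cylinder; an exterior leaf in $R_3$ is then a cylinder glued to a cylinder, i.e.\ a torus, consistent with $N_2$ being non-isolated, and $\ov{\C(N_1,\fa)}$ together with a collar of torus leaves is the invariant solid torus $R$. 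In the configuration you retain, an exterior leaf is $(\text{sphere minus a disc})\cup(\text{cylinder})\cup(\text{sphere minus a disc})=D^2\cup(S^1\times[0,1])\cup D^2$, which is a $2$-sphere, not a torus as you assert; so that configuration yields no torus leaves, no solid torus $R$, and indeed no room for the non-isolated center $N_2$ to accumulate on $N_0$ at all. It is exactly the situation handled by the homology argument behind Lemma~\ref{Lemma:eliminationallisolated} (two isolated centers, one pairing trivial), which is how the paper rules it out.

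A second, related flaw is the genus argument you use to exclude ``adjacent'' configurations: a torus leaf of $\C(N_2,\fa)$ meeting $W(N_0)$ in an annular neighborhood of an essential curve has trace a cylinder, which is planar of genus $0$, so ``any trace from $\C(N_2,\fa)$ that is not a disc has positive genus'' is false. Worse, in the correct configuration $\C(N_1,\fa)$ sits on $R_1\cup R_2$ and the exterior tori sit on the adjacent region $R_3$, with both traces homeomorphic to cylinders — so your exclusion, were it valid, would eliminate the very case the lemma is about. To repair the argument you need to (a) include case (i.b) of Lemma~\ref{Lemma:separatrices} and show it is the one compatible with a non-isolated $N_2$, and (b) exclude case (ii) not by genus of traces but by the observation that it forces all leaves near $N_0$ to be spheres, contradicting the presence of $N_2$ (or by citing the homology argument of Lemma~3 in \cite{Camacho-Scardua}).
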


\begin{proof}
By hypothesis $N_0$ and $N_1$ are isolated singularities and we
assume  the pairing   $N_0 \leftrightarrow N_1$ is non-trivial. By
Lemma~\ref{Lemma:separatrices} we have a picture as in
Figure~\ref{Figure:isolated1}. We claim that the leaves $L$ close
enough to $\partial \mathcal C(N_1,\fa)$, but not contained in
$\mathcal C(N_1,\fa)$, are diffeomorphic to tori. Indeed such a
leaf is a compact orientable surface and there two possibilities: either
it is a torus obtained as the union of two cylinders, one
``bigger" given by the triviality of the holonomy of $\partial
\mathcal C(N_1,\fa)$ and other ``smaller" given by the local
structure of $\fa$ around the (isolated) saddle $N_0$, or it is
the union of a ``big" cylinder with two discs, resulting in a
$2$-sphere. In this last case all leaves near
$N_0$, except for   the separatrices, are spheres and this is not
possible by a standard homology argument (see Lemma 3 in
\cite{Camacho-Scardua} for details).  We can therefore replace $\fa$ in $V$ by a foliation with a
non-isolated  center as singular set. This proves the lemma. \end{proof}

Notice that a leaf $L$ as in the proof above
necessarily bounds two  solid tori invariant by $\fa$: one is
$R(L)\subset \mathcal C(N_2,\fa)$, and the other is union of (the
singular solid torus) $\overline{\mathcal C(N_1,\fa)}$ with a
small neighborhood of $\partial \mathcal C(N_1,\fa)$ bounded by
$L$. Hence the union $\ov{\mathcal C(N_1,\fa)}\cup \ov {\mathcal
C(N_2,\fa)}$ must be all of $M$, and therefore $M$  is
diffeomorphic to a Lens space: these are the only oriented
3-manifolds that are a union of two solid tori, glued along their
common boundary (see \cite[pages 20--21]{Hempel}).

\medskip

Summarizing the above discussion we obtain:

\begin{Proposition}
\label{Proposition:isolatedsaddlecases} Let $\fa$ be a closed
Bott-Morse foliation on a closed $3$-manifold $M$ satisfying
either $c(\fa) > 2 s(\fa)$ or $\sing(\fa)$ has pure dimension and
$c(\fa)> s(\fa)$. Suppose there is an isolated saddle singularity
$N_0\subset\sing(\fa)$ paired with two centers $N_1,
N_2\subset \sing(\fa)$. Then either $N_0, N_1, N_2$ belong to a
same disconnected  component of $\fa$ or one of the centers, say
$N_1$, must be isolated and we have:

\begin{itemize}

\item[{\rm(i)}] If $N_2$ is also isolated then one of the pairings
$N_0\leftrightarrow N_j$ is trivial.

\item[{\rm(ii)}] If $N_2$ is non-isolated then there are two
possibilities: either the pairing $N_0\leftrightarrow N_1$ is
trivial or there is a compact region containing $N_0\cup N_1$,
diffeomorphic to a solid torus, where $\fa$ can  be replaced by a
compact foliation in $S^1 \times \ov D^2$ with a one-dimensional
center-type component as singular set and invariant boundary $S^1
\times S^1$.
\end{itemize}
\end{Proposition}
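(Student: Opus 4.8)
The plan is to prove the proposition as a trichotomy on the dimensions of the two center components $N_1, N_2$ paired with the isolated saddle $N_0$; in each branch the conclusion will be a direct citation of one of the lemmas already established in this section, so no new geometric argument is needed. Thus the whole proof amounts to checking that the three cases are exhaustive, that the inequality hypothesis is available exactly where it is required, and that the labelling of the conclusions matches.

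Concretely, I would first dispose of the branch in which both $N_1$ and $N_2$ are isolated. Here Lemma~\ref{Lemma:eliminationallisolated} applies verbatim and says that one of the pairings $N_0 \leftrightarrow N_j$ is trivial; reindexing so that this is $N_1$, and noting that $N_1$ is indeed isolated, this is precisely conclusion (i). Next I would treat the mixed branch, say with $N_1$ isolated and $N_2$ non-isolated (hence a circle): Lemma~\ref{Lemma:nottwononisolated} gives exactly conclusion (ii), namely that either $N_0 \leftrightarrow N_1$ is trivial, or there is an invariant solid torus $R \cong S^1 \times \ov D^2$ containing $N_0 \cup N_1$ on which $\fa$ may be replaced by the compact foliation by concentric tori around a one-dimensional center, agreeing with $\fa$ near $\partial R = S^1 \times S^1$.

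The remaining branch is the one where $N_1$ and $N_2$ are both non-isolated, hence both circles. In this case $\sing(\fa)$ is not of pure dimension---it contains the isolated saddle $N_0$ alongside the one-dimensional centers---so the standing hypothesis forces $c(\fa) > 2 s(\fa)$, which is exactly what Lemma~\ref{Lemma:someisnotisolated} requires. Applying that lemma yields that $N_0, N_1, N_2$ all lie in a single disconnected component of $\fa$, of bitorus type if $N_0 = \partial \C(N_1,\fa) \cap \partial \C(N_2,\fa)$ and of torus-ball type otherwise; this is the first alternative in the statement. The only delicate point---and the one I would be most careful about---is this bookkeeping of hypotheses: one should check that the weaker hypothesis ``$\sing(\fa)$ of pure dimension and $c(\fa) > s(\fa)$'' is never actually invoked, since Lemmas~\ref{Lemma:eliminationallisolated} and~\ref{Lemma:nottwononisolated} need no inequality at all, while Lemma~\ref{Lemma:someisnotisolated} is used only in the genuinely mixed-dimension situation where the first hypothesis $c(\fa) > 2 s(\fa)$ is the one in force. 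With that, the three branches are exhaustive and the proof is complete.
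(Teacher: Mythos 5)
Your proposal is correct and follows essentially the same route as the paper, which presents this proposition explicitly as a summary of Lemmas~\ref{Lemma:eliminationallisolated}, \ref{Lemma:someisnotisolated} and \ref{Lemma:nottwononisolated}, split according to the dimensions of $N_1$ and $N_2$. Your additional bookkeeping observation --- that the both-non-isolated branch forces mixed dimensions, so the hypothesis $c(\fa) > 2 s(\fa)$ is automatically the one in force exactly where Lemma~\ref{Lemma:someisnotisolated} needs it --- is accurate and is implicit in the paper.
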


Notice that in case (i) we can perform a foliated surgery to
eliminate an isolated center and an isolated saddle, while in case
(ii), either we can eliminate an isolated center and an isolated
saddle, or else we can replace an isolated center and an isolated
saddle  by a non-isolated center. In all cases the condition $c(\fa) > 2 s(\fa)$ is preserved and the number of connected components of the singular set is reduced. If the singularities are all
isolated and we start with  $c(\fa) >  s(\fa)$, then this same condition is preserved by all the above reductions.



\subsection{Non-isolated saddle}

Now we study the possible pairings of a non-isolated saddle $N_0$
at the common boundary of two basins $\mathcal C(N_j,\fa), j=1,
2$   of  a closed Bott-Morse foliation $\fa$ on a closed
$3$-dimensional manifold $M$.    We first have a result about the boundary of the basin of a non-isolated center paired with a non-isolated saddle.

\begin{Lemma}
\label{Lemma:solidtorus}  Assume $\fa$ satisfies the inequality
$c(\fa) >  s(\fa)$. Let $N\subset \sing(\fa)$ be a one-dimensional
center-type component which is paired with some
one-dimensional saddle type component $N_0$. Then
$\partial\mathcal C(N, \fa)$ is homeomorphic to a torus or to the
union of two tori intersecting along a common circle which is a
parallel.
\end{Lemma}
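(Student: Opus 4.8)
I would prove Lemma~\ref{Lemma:solidtorus} by combining the Partial Stability Theorem (Theorem~\ref{Theorem:partialstabilitytheorem}) with the local model of a non-isolated saddle of dimension one in a $3$-manifold, and with the fiber-bundle description of the basin of a one-dimensional center (Corollary~\ref{Lemma:basinopen-dim3}). Since $m=3$ and $N_0$ has dimension $n_0=1$, the transverse type of $\fa$ along $N_0$ is a Morse singularity of a function of two variables with index $r\in\{1\}$ (the only saddle index available, which here equals $m-n_0-1=1$ as well); hence $N_0$ \emph{always} has two local separatrices, and the local model of $\Lambda(N_0)$ near $N_0$ is the product by $S^1$ (or a possibly twisted bundle over $S^1$, which is trivial by orientability) of the planar picture of two lines through a saddle bounding three regions $R_1,R_2,R_3$. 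So first I would fix a distinguished neighborhood $U(N_0)\cong N_0\times\Sigma$ and record this three-region decomposition, exactly as in the proof of Lemma~\ref{Lemma:someisnotisolated}.

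Next I would analyze which of the three regions the basin $\mathcal C(N,\fa)$ meets near $N_0$. Because $N_0$ is paired with the one-dimensional center $N$, we have $N_0\subset\partial\mathcal C(N,\fa)$, and by Corollary~\ref{Lemma:basinopen-dim3} the interior of $\mathcal C(N,\fa)$ is a solid torus $S^1\times D^2$ whose nearby leaves are tori $S^1\times S^1$. The separatrix (or separatrices) of $N_0$ lying in $\partial\mathcal C(N,\fa)$ is a bundle over $N_0\cong S^1$ with fiber the trace of a leaf in the transversal $\Sigma$ near the saddle, i.e.\ a bundle over $S^1$ with interval fibers. There are two cases, governed by whether $\Lambda(N_0)\setminus N_0$ is connected: (a) $\partial\mathcal C(N,\fa)$ consists of $N_0$ together with \emph{one} separatrix, so $\partial\mathcal C(N,\fa)$ is obtained from the trace picture by collapsing; and (b) $\partial\mathcal C(N,\fa)$ consists of $N_0$ together with \emph{two} separatrices. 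In case (a), a leaf $L\subset\mathcal C(N,\fa)$ close to the boundary is a torus; by Theorem~\ref{Theorem:partialstabilitytheorem}, $L\setminus(L\cap U(N_0))$ is homeomorphic to $\partial\mathcal C(N,\fa)\setminus U(N_0)$, and $L\cap U(N_0)$ is the fiberwise trivial family over $S^1$ of the relevant piece near the planar saddle; re-assembling, $\partial\mathcal C(N,\fa)$ is a torus pinched along a parallel circle, but as a topological space this is homeomorphic to a torus (the pinch of a parallel of $S^1\times S^1$ along the $S^1\times\{pt\}$ direction does not change the homeomorphism type if the collapsed circle bounds on the leaf side) — more carefully, one shows directly that $\partial\mathcal C(N,\fa)$ is homeomorphic to $S^1\times S^1$. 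In case (b), $\partial\mathcal C(N,\fa)=N_0\cup L_1\cup L_2$ where each $L_j\cup N_0$ is, by the same reconstruction, a torus, and the two tori meet exactly along $N_0$, which is a parallel circle on each.

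Then I would assemble these into the stated dichotomy: $\partial\mathcal C(N,\fa)$ is either a single torus (case (a), one separatrix) or the union of two tori meeting along a common parallel circle (case (b), two separatrices). The key technical points are: (i) the fiber bundles over $S^1$ that arise (interval bundles, $D^2$-bundles) are all trivial because $M$ is oriented, which is why $\mathcal C(N,\fa)$ really is $S^1\times D^2$ and the local pictures are honest products; (ii) Theorem~\ref{Theorem:partialstabilitytheorem} identifies $L\setminus U(N_0)$ with $\Lambda_0\setminus U(N_0)$ for the relevant separatrix component $\Lambda_0$, so the topology of the boundary is read off from the nearby tori; and (iii) the local Morse model near $N_0$ with $r=1=m-n_0-1$ forces exactly the planar ``two-lines-three-regions'' picture, fibered trivially over $S^1$, which is what produces the pinched-torus/parallel-circle phenomenon. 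I expect the main obstacle to be (iii)'s careful bookkeeping: determining, in each of the sub-cases, exactly how many of the three local regions $R_1,R_2,R_3$ lie in $\mathcal C(N,\fa)$ and which lie ``outside,'' and then checking that the resulting glued object is genuinely a torus (resp.\ two tori along a parallel) rather than some other surface — this is where one must use that the leaves inside $\mathcal C(N,\fa)$ are tori (Corollary~\ref{Lemma:basinopen-dim3}) to pin down the fiber types, and that there are no saddle connections so $\Lambda(N_0)$ is as described.
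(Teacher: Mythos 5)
Your overall strategy (local model at $N_0$, the Partial Stability Theorem, reassembly of the leaf pieces) is the same as the paper's, but there are two genuine gaps. First, your local model is wrong: you take the separatrices of $N_0$ to cut a distinguished neighborhood $U(N_0)$ into \emph{three} regions ``exactly as in the proof of Lemma~\ref{Lemma:someisnotisolated}''. That three-region picture is the model for an \emph{isolated} saddle in a $3$-manifold, where the separatrix is a quadric cone. Here $N_0$ is one-dimensional, so the transversal is a $2$-disc carrying a planar Morse saddle whose two separatrix lines divide it into \emph{four} conical sectors; hence $U(N_0)$ is divided into four regions, and the trace in the transversal of a leaf of $\C(N,\fa)$ consists of $i$ arcs with $i\in\{1,2,3,4\}$. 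Your case split ``one separatrix vs.\ two separatrices'' does not by itself yield the stated dichotomy: you must also rule out the configurations $i=3$ and $i=4$, which you never address.

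Second --- and this is where the first gap bites --- you never use the hypothesis $c(\fa)>s(\fa)$, which is precisely what excludes $i=4$: if the basin meets all four sectors, then $\ov{\C(N,\fa)}$ is open and closed in $M$, hence equal to $M$, so $\sing(\fa)=N\cup N_0$ and $c(\fa)=s(\fa)=1$, a contradiction. The case $i=3$ is excluded by a separate count: for a leaf $L\subset\C(N,\fa)$ the set $L\setminus(L\cap U)$ would have three connected components, while for a nearby exterior leaf $L_1$ the set $L_1\setminus(L_1\cap U)$ has one, contradicting Theorem~\ref{Theorem:partialstabilitytheorem}. Only after these exclusions do the surviving cases $i=1$ (a single torus, obtained as two cylinders glued along their boundaries; note $\Lambda\cap U$ is an honest annulus and $\Lambda\setminus N_0$ is a manifold, so no ``pinching'' argument is needed or correct) and $i=2$ (two tori meeting along $N_0$) give the statement. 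A smaller omission: the strip $L\cap U$ removed from a torus leaf is a priori a neighborhood of a $(1,p)$-curve rather than of a parallel, and one needs the observation of Remark~\ref{Remark:torusstrip} that its complement is still a cylinder.
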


The pairings in Figure~\ref{Figure:dimensiontwo} illustrate both
possibilities in this lemma.

\begin{proof}
 Both $N$ and $N_0$ are non-isolated. Recall that every circle in an oriented manifold has trivial normal bundle. So we may take a distinguished neighborhood
$U $ of $N_0$ in $M$ where $\fa$ is equivalent to the lift to a trivial
bundle over  $N_0\cong S_1$ of  a foliation $\fa_1$ with a Morse
singularity in a neighborhood $V$ of the
origin $0\in \mathbb R^2$. We identify $V$ with a disc transversal
to $N_0$. Notice that $U $ is divided into four conical sectors by
the separatrices of $N_0$.

Consider a sequence of leaves of $\fa$
that has $N_0$ in its closure, and let
 $\mathcal L$ be one of these leaves. The trace of  $\mathcal L$ in
 the transversal $V$ consists
of $i$ arcs, for some $i = 1,...,4$. We claim $i$ must be either
$1$ or $2$. In fact, if $i=4$, then the closure $\overline{
\mathcal C(N, \fa)}$ would be also an open set in $M$, so
$\overline{ \mathcal C(N, \fa)} = M$, contradicting the hypothesis
$c(\fa)>  s(\fa)$. If $i=3$, then given a leaf $L\subset
\C(N,\fa)$ we have that $L\setminus(L\cap U )$ is homeomorphic to a  torus
minus three ``parallel" strips, and it has three connected
components. On the other hand, given any leaf
$L_1\not\subset\C(N,\fa)$ which is sufficiently near $N_0$ we have
that $L_1\cap U $ is homeomorphic to a strip and therefore
$L_1\setminus (L_1\cap U )$ has one  connected component, which is
a contradiction to Theorem~\ref{topology of boundary}. Hence $i$
is $1$ or $2$.

Suppose that $i=1$. A leaf $L\subset \C(N,\fa)$ is
a torus and the intersection $L\cap U $ is a strip in $L$ so that
$L\setminus (L\cap U )$ is a cylinder. Notice that {\it a priori}
 the strip we remove
from the torus is not a neighborhood of a parallel, but a
neighborhood of a curve of type $(1,p)$, up to isotopy.
Nevertheless the complement of such neighborhood is also a
cylinder, by  Remark~\ref{Remark:torusstrip} below. Same
observation applies to $\Lambda$  below.

\vglue.1in By Theorem~\ref{Theorem:partialstabilitytheorem} if we
denote by $\Lambda$ the union   of $N_0$ and the separatrices of
$N_0$ contained in $\partial \C(N,\fa)$,  then $\Lambda\setminus
(L\cap U )$ is a cylinder. By Proposition~\ref{Lemma:productN_0},
$\Lambda\cap U $ is a product of $S^1$ by an open interval.
Therefore $\Lambda$ is the union of two cylinders and
$\Lambda\setminus N_0$ is a smooth manifold. This shows that
$\Lambda$ is homeomorphic to a torus. Since $\fa$ has no saddle
connections we have $\partial \C(N,\fa)=\Lambda$. Finally, if
$i=2$ using Theorem~\ref{Theorem:partialstabilitytheorem} and
Proposition~\ref{Lemma:productN_0} and reasoning as above we
conclude that ${\partial \mathcal C(N, \fa)}$ consists of two tori
that meet at $N_0$ (see Figure~\ref{Figure:nontrivial1}).
\end{proof}

\begin{Remark}\footnote{We are grateful to A.
Verjovsky for  this remark.}
\label{Remark:torusstrip} {\rm Let $\gamma$ be a $(1,p)$ type
closed curve in the torus $T^2 \cong S^1 \times S^1$. Then there
is a homeomorphism of the torus mapping $\gamma$ onto a $(1,0)$
type curve (to see this take any matrix $A$ of determinant that
carries $(1,p)$ into $(1,0)$, and consider the corresponding torus
diffeomorphism). Hence if we consider a
tubular neighborhood of $\gamma$ in the torus, then its complement
is homeomorphic to a cylinder. In particular a closed oriented
surface obtained by gluing  two such complements is
diffeomorphic to the torus.
 }
\end{Remark}

Now we have:

\begin{Lemma}[$N_0\cong S^1$,  isolated centers, non-singular intersection of boundaries]
Suppose that  $N_1$ and $N_2$ are isolated centers and $\partial
\mathcal C(N_1,\fa) \cap
\partial \mathcal C(N_2,\fa)\not \subset \sing(\fa)$.  Then  $\partial
\mathcal C(N_1,\fa) \cap \partial \mathcal C(N_2,\fa)$ is a 2-disc
bounded by $N_0$. Moreover,  $\ov{\mathcal C(N_1, \fa)}\cup
\ov{\mathcal C(N_2, \fa)}$ is a closed 3-ball and the pairing
$N_0\leftrightarrow(N_1,N_2)$ is  trivial.

\end{Lemma}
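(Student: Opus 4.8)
The plan is to work in a distinguished neighborhood of the circle $N_0$ and compare the leaves of $\C(N_1,\fa)$ and $\C(N_2,\fa)$ with one another and with the external leaves, exactly in the spirit of Lemma~\ref{Lemma:solidtorus} and the arc-counting argument used there. Note first that, $N_0$ being non-isolated of dimension $1$ in the $3$-manifold $M$, its transverse Morse index must be $1$; fix a distinguished neighborhood $U=U(N_0)$, which (since $N_0\cong S^1$ has trivial normal bundle) is a solid torus on which $\fa$ is the lift of a planar Morse saddle foliation on a transverse disc $V$, whose separatrices cut $V$ into four sectors. By Corollary~\ref{Lemma:basinopen-dim3}, as $N_1,N_2$ are isolated centers with non-empty basin boundary, each basin $\C(N_i,\fa)$ is an open $3$-ball and every leaf in it is a $2$-sphere; by Theorem~\ref{Lemma:basinopen}(2), $\partial\C(N_i,\fa)$ is the union of $N_0$ with separatrices of $N_0$, and $N_0$ is the only saddle on it. Shrinking $U$, Theorem~\ref{Theorem:partialstabilitytheorem} lets us assume every leaf meeting $U$ is compact.

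First I would determine the local configuration near $N_0$. Since $\partial\C(N_1,\fa)\cap\partial\C(N_2,\fa)\not\subset\sing(\fa)$, it contains a separatrix leaf $L_*$, with one basin on each side of $L_*$; in particular no basin straddles a separatrix, so $\C(N_1,\fa)$ and $\C(N_2,\fa)$ enter $U$ through two sectors adjacent along $\overline{L_*}\cap U$. For a sphere leaf $L_i\subset\C(N_i,\fa)$ near the boundary, Proposition~\ref{Lemma:productN_0} shows $L_i\cap U$ is a disjoint union of annuli over $N_0$ with fibre the arcs $L_i\cap V$. Counting these arcs and combining Theorem~\ref{Theorem:partialstabilitytheorem} with the fibrewise Milnor-Wallace description of the external leaves (Theorem~\ref{topology of boundary}), I would rule out every configuration except the one in which each basin meets $U$ in a single sector: if a basin occupied more than one sector, $L_i$ with the corresponding annuli removed would split into more components than can simultaneously be matched, via the Partial Stability Theorem, with the sheets of $\partial\C(N_i,\fa)$ outside $U$ and with a compact external leaf. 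This is the configuration obtained by rotating the planar center-saddle-center picture (cf. Figures~\ref{Figure:notproduct2} and~\ref{Figure:centroseladim2trivial0}). This case elimination is the step I expect to be the main obstacle, just as the analysis of the integer $i$ is in Lemma~\ref{Lemma:solidtorus}; one also has to invoke the hypothesis on the number of singular components (as in Lemma~\ref{Lemma:someisnotisolated}) to discard the degenerate possibility $\partial\C(N_1,\fa)=\partial\C(N_2,\fa)$, which would force $M=\overline{\C(N_1,\fa)}\cup\overline{\C(N_2,\fa)}$ and $\sing(\fa)=N_0\cup N_1\cup N_2$.

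With each basin occupying a single sector of $V$, a sphere leaf $L_1\subset\C(N_1,\fa)$ meets $U$ in one annulus, so $L_1\setminus(L_1\cap U)$ is two discs; letting $L_1$ tend to $\partial\C(N_1,\fa)$ and using Theorem~\ref{Theorem:partialstabilitytheorem}, the shared separatrix $L_*$ and the other separatrix $L_1'$ of $N_0$ lying on $\partial\C(N_1,\fa)$ each have closure a closed $2$-disc with boundary circle $N_0$. Hence $\partial\C(N_1,\fa)=\overline{L_*}\cup_{N_0}\overline{L_1'}$ is a $2$-sphere and, since $\C(N_1,\fa)$ is diffeomorphic to the total space of the normal bundle of $N_1$ (an $\mathbb R^3$), $\overline{\C(N_1,\fa)}$ is a closed $3$-ball; likewise $\overline{\C(N_2,\fa)}$ is a closed $3$-ball with boundary $\overline{L_*}\cup_{N_0}\overline{L_2'}$. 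Since $L_1'\neq L_2'$ are distinct leaves, $\partial\C(N_1,\fa)\cap\partial\C(N_2,\fa)=\overline{L_*}$, a $2$-disc bounded by $N_0$. Gluing the two closed $3$-balls along the common boundary disc $\overline{L_*}$ shows that $\overline{\C(N_1,\fa)}\cup\overline{\C(N_2,\fa)}$ is a closed $3$-ball.

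Finally, the cluster $N_0\cup N_1\cup N_2$ inside the ball $\overline{\C(N_1,\fa)}\cup\overline{\C(N_2,\fa)}$ realizes exactly the model pairing, i.e. the rotation of the planar center-saddle-center picture of Figure~\ref{Figure:centroseladim2trivial0}. As in the planar center-saddle elimination, this model is obtained from a trivial (regular) foliation by the standard deformation through a saddle-node, so there is a foliated surgery on a ball $R\supset N_0\cup N_1\cup N_2$ replacing $\fa|_R$ by a regular foliation that agrees with $\fa$ near $\partial R$ and has no singularities; this eliminates $N_0,N_1,N_2$, so the pairing $N_0\leftrightarrow(N_1,N_2)$ is trivial.
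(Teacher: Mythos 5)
Your argument follows the paper's own proof in all essentials: both identify each $\partial\mathcal C(N_i,\fa)$ as a limit of spheres pinched along $N_0$, i.e.\ two $2$-discs glued along $N_0$, use the non-singularity hypothesis to conclude that the intersection of the two boundaries is a single common disc, deduce that $\ov{\mathcal C(N_1,\fa)}\cup\ov{\mathcal C(N_2,\fa)}$ is a closed $3$-ball, and finish by replacing $\fa$ on an invariant region containing $N_0\cup N_1\cup N_2$ by a regular foliation. The differences are cosmetic: you make explicit the sector-counting and the exclusion of the degenerate case $\partial\mathcal C(N_1,\fa)=\partial\mathcal C(N_2,\fa)$ via the standing hypothesis $c(\fa)>2s(\fa)$ (which the paper leaves implicit), and your final elimination is phrased as a saddle-node surgery on a ball rather than the paper's trivial foliation by discs on a solid cylinder.
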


\begin{proof}
Since $N_1$ and $N_2$ are isolated centres, their basins $\mathcal C(N_j,\fa),
\, j=1,2$, are open balls where the foliation is by concentric
spheres $S^2$. We denote by $\Lambda(N_0)$ the union of $N_0$ with
the separatrices through $N_0$. By
Theorem~\ref{Theorem:localproduct}  the holonomy of $N_0\cong S^1$
is trivial and therefore $\fa$ has a bundle-type structure in a
neighborhood $U \subset M$ of $N_0$.
 The boundary $\partial \mathcal
C(N_j,\fa)$ is a subvariety which is a limit of spheres $S^2$
pinched along $N_0\cong S^1$ with a bundle-type structure in $U $.
Thus $\partial \mathcal C(N_j,\fa)$ is a union of two 2-discs
$D_j^{(1)}$ and $D_j ^{(2)}$ along their common boundary $S^1$.
Since by hypothesis $\partial \mathcal C(N_1,\fa)\cap
\partial \mathcal C(N_2,\fa)$ contains some leaf of $\fa$, not
only $N_0$, it follows from the local picture of $\fa$ in $U $
that, up to reordering, we have  $\partial \mathcal C(N_1,\fa)\cap
\partial \mathcal C(N_2,\fa)=D_1^{(2)}=D_2^{(1)}$ and
the union $\ov{\mathcal C(N_1,\fa)}\cup \ov {\mathcal C(N_2,\fa)}$
is homeomorphic  to the closed three-ball $B^3$.

In fact we can assume that $\partial (\ov{\mathcal C(N_1,\fa)}\cup
\ov {\mathcal C(N_2,\fa)})=D_1^{(1)}\cup D_2^{(2)}\cup N_0$, so we
conclude that there are neighborhoods $W_\nu$ of $\partial
(\ov{\mathcal C(N_1,\fa)}\cup \ov {\mathcal
C(N_2,\fa)})=D_1^{(1)}\cup D_2^{(2)}\cup N_0$ such that $\lim
W_\nu = \partial (\ov{\mathcal C(N_1,\fa)}\cup \ov {\mathcal
C(N_2,\fa)})=D_1^{(1)}\cup D_2^{(2)}\cup N_0$.

If a
non-separatrix leaf $L\in \fa$ intersects $W_\nu$ and $L$ is not
contained in $\partial (\ov{\mathcal C(N_1,\fa)}\cup \ov {\mathcal
C(N_2,\fa)})$,  then $(L\cap W_\nu)$ is either a single disc or
the union of two discs, by
Theorem~\ref{Theorem:partialstabilitytheorem}.
 Moreover,  the intersection $L\cap U $ is
diffeomorphic to a bundle over $S^1$ with fiber the interval
$(-\epsilon, \epsilon)$.  This holds for leaves $L$ nearby
 $D_1^{(1)}$ and leaves $L$ nearby $D_2^{(2)}$, so  we have:

\begin{Claim} We can choose a compact neighborhood  $V \subset M$ of
$\ov{\mathcal C(N_1,\fa)}\cup \ov {\mathcal C(N_2,\fa)}$
diffeomorphic to a solid cylinder $D^2\times [0,1]$, whose boundary
$\po V$ consists of two invariant discs $D_1\cong D^2\times
\{0\}$ and $D_2 =D^2\times \{1\}$ and a transverse open  cylinder
$\Sigma\cong S^1\times (0,1)$. The intersection $\sing(\fa) \cap
V$ consists of $N_0, N_1, N_2$ and no other component of
$\sing(\fa)$.
\end{Claim}
Using this claim  we can replace $\fa\big|_V$ by a trivial
foliation by discs, proving the lemma.
\end{proof}

\begin{Lemma}[$N_0\cong S^1$,  isolated centers, singular intersection of boundaries]
Suppose that  $N_1$ and $N_2$ are isolated and   $\partial
\mathcal C(N_1,\fa) \cap \partial \mathcal C(N_2,\fa)=N_0$. Then
there is an invariant region $R$ containing $N_0, N_1, N_2$,
diffeomorphic to $S^2 \times [0,1]$, and $\fa\big|_R$ can be
replaced by a regular foliation by $2$-spheres. Therefore  we can
eliminate the three components $N_0, N_1,N_2$ at once.

\end{Lemma}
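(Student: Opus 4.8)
The plan is to follow closely the proof of the companion lemma (the non-singular intersection case), carrying the bundle description of $\fa$ near $N_0\cong S^1$ one step further. By Theorem~\ref{Theorem:localproduct} the holonomy of $N_0$ is trivial, so on a distinguished neighborhood $U$ of $N_0$ the foliation $\fa$ is the lift to a trivial bundle $N_0\times V$ of a two-dimensional Morse saddle $g\colon V\to\re$ on a transverse disc $V\subset\re^2$, and the separatrices divide $U$ into four conical sectors. Since $N_1,N_2$ are isolated centers, Corollary~\ref{Lemma:basinopen-dim3} says each $\overline{\C(N_j,\fa)}$ is a closed $3$-ball foliated by concentric $2$-spheres, and by Theorem~\ref{Theorem:localproduct} its boundary $\partial\C(N_j,\fa)$ is a pinched $2$-sphere $D_j^{(1)}\cup_{N_0}D_j^{(2)}$, the union of two $2$-discs glued along $N_0$, each $D_j^{(i)}$ being one of the four separatrix sheets inside $U$.

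First I would pin down how the two basins sit in $U$. Each $\partial\C(N_j,\fa)\cap V$ is a pair of adjacent rays bounding a single sector of $V$, namely the one through which the traces of the nearby sphere leaves run. Because $\partial\C(N_1,\fa)\cap\partial\C(N_2,\fa)=N_0$, no separatrix sheet is common to the two boundaries, so the two pairs of rays are disjoint away from the origin; this forces $\C(N_1,\fa)$ and $\C(N_2,\fa)$ to occupy two \emph{opposite} sectors, and it forces $\Lambda(N_0):=N_0\cup(\text{separatrices of }N_0)=\partial\C(N_1,\fa)\cup_{N_0}\partial\C(N_2,\fa)$, all four sheets being accounted for, and hence $R\cap\sing(\fa)$ will involve no component other than $N_0,N_1,N_2$ (there are no saddle-connections). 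The remaining two sectors carry the external leaves. Applying the Partial Stability Theorem~\ref{Theorem:partialstabilitytheorem} to each connected component of $\Lambda(N_0)\setminus(\Lambda(N_0)\cap U)$ — these are four $2$-discs — together with the local form of $\fa$ in $U$ (an external leaf meets $U$ in a single annular sheet joining some $D_1^{(i)}$ to some $D_2^{(k)}$), I would conclude that every external leaf sufficiently close to $\Lambda(N_0)$ is a $2$-sphere, obtained by capping such an annulus with two discs, one parallel to a sheet of $\partial\C(N_1,\fa)$ and one parallel to a sheet of $\partial\C(N_2,\fa)$.

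Now take $R$ to be a saturated closed regular neighborhood of $X:=\overline{\C(N_1,\fa)}\cup\overline{\C(N_2,\fa)}\cup\Lambda(N_0)$. By the previous step $\partial R$ consists of two leaves, each a $2$-sphere, and $R\cap\sing(\fa)=N_0\cup N_1\cup N_2$. The complex $X$ deformation retracts onto a $2$-sphere: the union of four $2$-discs along $N_0$ is homotopy equivalent to $S^2\vee S^2\vee S^2$, and coning off two of the three summands by the two $3$-balls $\overline{\C(N_j,\fa)}$ leaves $S^2$; hence $R$ is a simply connected compact orientable $3$-manifold whose boundary is two $2$-spheres, so $R\cong S^2\times[0,1]$ (cap off both boundary spheres and invoke the Poincaré conjecture, or build $R$ directly from $S^2\times\{0\}$ via the fiberwise Milnor--Wallace surgery description of Theorem~\ref{topology of boundary}). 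Since $\fa$ is already a product by $2$-spheres near $\partial R$, I would replace $\fa|_R$ by the trivial foliation of $S^2\times[0,1]$ by the slices $S^2\times\{t\}$; the resulting foliation on $M$ agrees with $\fa$ outside $R$, is therefore Bott-Morse and, being proper on $R$ and unchanged elsewhere, still closed, and it has lost exactly the components $N_0,N_1,N_2$. I expect the main obstacle to be the identification $R\cong S^2\times[0,1]$ — keeping the bookkeeping of the four separatrix sheets consistent, showing the external leaves assemble into the two boundary spheres, and ruling out any further singular component being trapped in the region swept out by them — and this is exactly where Theorem~\ref{topology of boundary} and the local product structure do the work.
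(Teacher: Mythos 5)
Your argument follows the paper's proof essentially step for step: the trivial holonomy and product structure of $\fa$ along $N_0\cong S^1$, the Partial Stability Theorem~\ref{Theorem:partialstabilitytheorem} showing that each external leaf is two $2$-discs (homeomorphic to the components of $\partial\C(N_j,\fa)\setminus U$) capped onto the annular trace in $U$, hence a $2$-sphere, and the resulting compact invariant neighborhood $R\cong S^2\times[0,1]$ of $\ov{\C(N_1,\fa)}\cup\ov{\C(N_2,\fa)}$ on which $\fa$ is replaced by the product foliation. The only difference is that you make explicit details the paper leaves implicit (the opposite-sector bookkeeping at $N_0$ and the identification of $R$ with $S^2\times[0,1]$), so the proposal is correct and takes the same route.
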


\begin{proof}
The situation is depicted in
Figure~\ref{Figure:twocentersonlysingularity}{\rm} below.


\begin{figure}[ht]
\begin{center}
\includegraphics[scale=0.5]{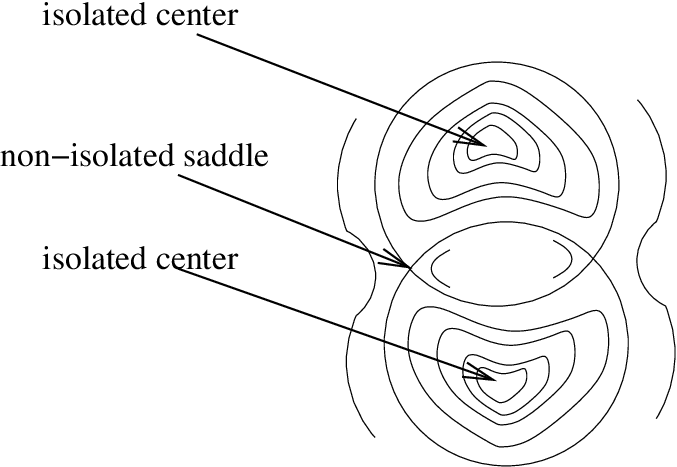}
\end{center}
\caption{} \label{Figure:twocentersonlysingularity}
\end{figure}
We claim that a  leaf $L$ near  $\ov{\mathcal C(N_1,\fa)}
\cup\ov{\mathcal C(N_2,\fa)}$, but not contained in this set, is a $2$-sphere. Indeed, the
intersection of this leaf with a suitable product type
neighborhood $U $, where $\fa$ is of product type, is a strip. On
the other hand, $\partial \mathcal C(N_j,\fa)\setminus \partial
(\mathcal C(N_j,\fa)\cap U )$ is a $2$-sphere minus a strip around
the equator so that $\partial \mathcal C(N_j,\fa)\setminus
\partial (\mathcal C(N_j,\fa)\cap U )$ consists of two disjoint $2$-discs.
Thus, as in the lemmas above,
Theorem~\ref{Theorem:partialstabilitytheorem} implies that
$L\setminus (L\cap U )$ is homeomorphic to the disjoint union of
two $2$-discs. Hence $L$ is the union of two discs $D^2$ and one
strip $[-1,1]\times S^1$. Since it is compact and orientable, we must have that  is
$L$ diffeomorphic to $S^2$.

Therefore we have
a compact invariant neighborhood $R$ of $\ov{\mathcal C(N_1,\fa)}
\cup\ov{\mathcal C(N_2,\fa)}$ diffeomorphic to $S^2\times [0,1]$
and the lemma follows.
\end{proof}

\begin{Lemma}[$N_0\cong S^1$, non-isolated centers, singular intersection of boundaries]
\label{Lemma:eliminationalldimensionone} Assume now that
$N_0\subset \Sad(\fa)$ and $N_1, N_2 \subset \Cent(\fa)$ are all
of dimension one. If $N_0=\partial \mathcal C(N_1,\fa) \cap
\partial\mathcal C(N_2,\fa)$, then there is an invariant solid torus $V$ containing
$N_0, N_1, N_2$. Therefore, we can replace  $\fa\big|_V$ by a
foliation by concentric tori.
\end{Lemma}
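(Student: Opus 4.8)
The plan is to exhibit a compact invariant solid torus $V\subset M$ containing $N_0,N_1,N_2$ and no other component of $\sing(\fa)$, on which $\fa$ restricts to a lift over a circle of the planar center--saddle--center picture of Figure~\ref{Figure:dimensiontwo}, and then to replace $\fa\big|_V$ by the standard foliation of $S^1\times\ov D^2$ by concentric tori, matched along the boundary leaf. The first step is to pin down the separatrices of $N_0$. Since $N_1\ne N_2$ are centers, Theorem~\ref{Lemma:basinopen} gives that $\C(N_1,\fa)$ and $\C(N_2,\fa)$ are disjoint open sets; as $\po\C(N_1,\fa)\cap\po\C(N_2,\fa)=N_0$ is a single saddle circle, these boundaries are non-empty, distinct and meet, so item (4) of Theorem~\ref{Lemma:basinopen} forces the transverse Morse index of $N_0$ to equal $1=m-\dim N_0-1$. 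Hence $N_0$ has at most two separatrices (item (i) of Theorem~\ref{topology of boundary}), and by item (2) of Theorem~\ref{Lemma:basinopen} each $\po\C(N_j,\fa)$ is $N_0$ together with some of them. A separatrix lying in both boundaries would lie in $N_0\subset\sing(\fa)$, which is impossible, and the same argument shows that neither boundary contains \emph{both} separatrices; so $N_0$ has exactly two separatrices $L_1,L_2$ with $\po\C(N_1,\fa)=N_0\cup L_1$ and $\po\C(N_2,\fa)=N_0\cup L_2$. Since each boundary carries a single separatrix, the ``two tori'' alternative of Lemma~\ref{Lemma:solidtorus} is ruled out, so each $\po\C(N_j,\fa)$ is a torus on which $N_0$ is an essential curve and $L_j$ the complementary open annulus; combining with Corollary~\ref{Lemma:basinopen-dim3} ($\C(N_j,\fa)\cong S^1\times D^2$) one gets that $\ov{\C(N_j,\fa)}$ is a closed solid torus and $\ov{\C(N_1,\fa)}\cap\ov{\C(N_2,\fa)}=N_0$.

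Next I would analyse the leaves on the exterior side of $\Lambda(N_0)=N_0\cup L_1\cup L_2$. Fixing a distinguished neighbourhood $U=U(N_0)$, on which $\fa$ is the lift over $N_0\cong S^1$ of a planar Morse saddle with four conical sectors, exactly two of which meet a center, Theorem~\ref{Theorem:partialstabilitytheorem} together with Proposition~\ref{Lemma:productN_0} shows that an exterior leaf $L$ near $\Lambda(N_0)$ satisfies: $L\setminus U$ is homeomorphic to $\Lambda(N_0)\setminus U=(L_1\setminus U)\sqcup(L_2\setminus U)$, a disjoint union of two annuli, while $L\cap U$ is, fibred over $N_0$, a pair of strips joining these two annuli --- one through each of the two sectors not meeting a center. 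An Euler characteristic count gives $\chi(L)=0$, so $L$ is a torus (alternatively, use the fiberwise Milnor--Wallace description of Theorem~\ref{topology of boundary} with $n=r=1$, $m=3$). By local triviality near a compact leaf with trivial holonomy (Theorem~\ref{Lemma:trivialholonomy}) these tori fill a $1$-parameter invariant family, and by finiteness of $\sing(\fa)$ I can select one of them, $L'$, close enough to $\Lambda(N_0)$ that the compact invariant region $V$ it bounds --- the closure of $\C(N_1,\fa)\cup\C(N_2,\fa)\cup\Lambda(N_0)$ together with the exterior tori lying between $\Lambda(N_0)$ and $L'$ --- contains $N_0,N_1,N_2$ and no other component of $\sing(\fa)$, with $\po V=L'$. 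Mirroring the planar model of Figure~\ref{Figure:dimensiontwo} crossed with $S^1$ (two discs around the centers, joined at the pinch point of $\Lambda(N_0)$, all inside the disc bounded by $L'$), $V$ is a $\ov D^2$-bundle over a circle; since $M$ is oriented this bundle is trivial, so $V\cong S^1\times\ov D^2$ is a solid torus.

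Finally, $L'=\po V$ is a compact leaf with trivial holonomy, so Theorem~\ref{Lemma:trivialholonomy} gives a collar $L'\times(-1,0]\subset V$ of $\po V$ on which $\fa$ is the product foliation with leaves $L'\times\{t\}$. I would then replace $\fa\big|_V$ by the standard Bott-Morse foliation of $S^1\times\ov D^2$ by concentric tori --- a single one-dimensional center as singular set, all nearby leaves parallel to $\po V$ --- which agrees with $\fa$ on that collar; the foliation so obtained on $M$ is again a closed Bott-Morse foliation, with the three components $N_0,N_1,N_2$ replaced by one center, which is the claimed foliated surgery.

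The step I expect to be the main obstacle is the rigorous identification of $V$ as a genuine solid torus: by Remark~\ref{Rem:notptoducts} the leaves inside $V$ need not have a global product structure, so one cannot simply write $\fa\big|_V$ as a product with $S^1$; instead one must work with the normal-bundle structure of $\fa$ along the circles $N_0,N_1,N_2$ and invoke the orientability of $M$ to trivialise the ambient $\ov D^2$-bundle, and one must also check carefully that $L'$ can be chosen so that no third component of $\sing(\fa)$ slips into $V$.
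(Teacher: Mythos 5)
Your proposal is correct and follows essentially the same route as the paper: analyze the trace of the two basins in a distinguished neighborhood of $N_0$ (forcing them into opposite sectors), use the partial stability theorem to show the exterior leaves are tori bounding an invariant solid torus containing $N_0,N_1,N_2$, and then perform the surgery to a foliation by concentric tori. Your treatment is if anything slightly more detailed than the paper's (the explicit identification of the two separatrices, the Euler characteristic count, and the collar matching), but there is no substantive difference in method.
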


\begin{proof}
By hypothesis  $N_0, N_1, N_2$ have dimension one. Given $N_0
\subset \sing(\fa)$ we denote by $\Lambda(N_0)$ the union of $N_0$
with the separatrices through $N_0$. By
Proposition~\ref{Lemma:productN_0}, $\fa$ has a product structure
in a distinguished neighborhood $U \subset M$ of $N_0$
diffeomorphic to a product $D\times S^1$, where $D\subset \mathbb
R^2$ is a product of open intervals. The separatrix $\Lambda(N_0)$
divides $U $ into four regions $R_1, R_2, R_3, R_4$ and we can
assume that $R_j$ is adjacent to $R_{j+1}$ and $R_4$ to $R_1$.

 Considering the  several
possibilities for the trace of $\C(N_1,\fa)$ and $\C(N_2,\fa)$ in
$U $ under the assumption that $\C(N_1,\fa)\cap \C(N_2,\fa) =N_0$
we obtain, after reordering the regions $R_j$, that  the only
possibility is:
 $\C(N_1,\fa)\cap U \subset R_1$ and
$\C(N_2,\fa)\cap U \subset R_3$.

\begin{Claim} We can choose a solid torus $V \subset M$ with boundary
$\po V$ invariant by $\fa$, such that $V$ contains a
neighborhood of $N_0$, \, it also contains $\Lambda(N_0)$,  and
$\sing(\fa) \cap V$ consists of $N_0$ and the two other components
$N_1$, $N_2$ of center-type.
\end{Claim}

 Let us prove this claim: given a leaf $L_j\subset \C(N_j,\fa)$ of $\fa$ that intersects
$U $ we have that $L_j\setminus (L_j\cap U )$ is a torus minus a strip,  {\it i.e.}, a cylinder. Thus,
by Proposition~\ref{Proposition:trivialneighborhood},
 given an outer leaf $L$ such that
$L\cap \C(N_j,\fa)=\emptyset$ one has that  $L$  is
obtained as the union of  two cylinders  glued by their boundaries, so
 $L$ is a torus. hence the basins $\C(N_j,\fa)$ are solid
tori. Proposition~\ref{Lemma:solidtorus} then shows that $L$
bounds a region diffeomorphic to the solid torus, obtained by the
union of two solid cylinders.
This region is invariant and contains the singularities $N_0, N_1,
N_2$, proving the claim and completing the proof of
(\ref{Lemma:eliminationalldimensionone}).
\end{proof}

\begin{Lemma}[$N_0\cong S^1$, non-isolated centers, nonsingular intersection of boundaries]
\label{Lemma:eliminationalldimensiononenonsingular} Suppose that
$c(\fa) >  s(\fa)$,  the components of the singular set, $N_0\subset \Sad(\fa)$ and $N_1, N_2
\subset \Cent(\fa)$, are all of dimension one,  and
$N_0\subset\partial \mathcal C(N_1,\fa) \cap
\partial\mathcal C(N_2,\fa)\not\subset \sing(\fa)$. Then
 there is an invariant solid torus $V$ containing
$N_0, N_1, N_2$ and we can replace  $\fa\big|_V$ by a
foliation by concentric tori.
\end{Lemma}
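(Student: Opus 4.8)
The plan is to follow, almost line for line, the proof of the twin Lemma~\ref{Lemma:eliminationalldimensionone} (the case $N_0=\partial\C(N_1,\fa)\cap\partial\C(N_2,\fa)$); the only genuinely new input is that here the separatrices of $N_0$ close up globally into a single loop, which is precisely what makes the two boundary sets share that loop rather than meeting only at $N_0$. First I would fix a distinguished product neighbourhood: since $N_0\cong S^1$ has trivial normal bundle, Proposition~\ref{Lemma:productN_0} together with Theorem~\ref{Theorem:localproduct} give a neighbourhood $U\cong D\times S^1$, $D\subset\re^2$ a product of intervals, on which $\fa$ is the suspension over $N_0$ of a transverse Morse saddle on $D$; write $\Lambda(N_0)$ for the union of $N_0$ with its separatrices, which cuts $U$ into four conical, cyclically adjacent regions $R_1,R_2,R_3,R_4$.

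The second step, which I expect to be the main obstacle, is to pin down the position of the two basins inside $U$. By Corollary~\ref{Lemma:basinopen-dim3} each basin $\C(N_j,\fa)$, $j=1,2$, has interior a solid torus foliated by tori, so the trace of such a leaf on the transversal $D$ is an embedded arc or circle and $L_j\cap U$ is a strip. I would then argue, as in Lemma~\ref{Lemma:eliminationalldimensionone}, that the trace of a basin cannot meet three or four of the $R_j$: if it did, either $\ov{\C(N_j,\fa)}$ would be open and closed, hence all of $M$ — impossible, since $\sing(\fa)$ contains the saddle $N_0$, against Theorem~\ref{Theorem C} — or an outer leaf near $N_0$ would have the wrong number of connected components to be homeomorphic to $\Lambda(N_0)\setminus(\Lambda(N_0)\cap U)$, against Theorem~\ref{topology of boundary}. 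Hence each basin meets at most two of the $R_j$, and since $\C(N_1,\fa)$ and $\C(N_2,\fa)$ are disjoint this leaves, after reordering, $\C(N_1,\fa)\cap U\subset R_1$ and $\C(N_2,\fa)\cap U\subset R_3$. Finally, by Lemma~\ref{Lemma:existssaddlebdbasin} the hypothesis $\partial\C(N_1,\fa)\cap\partial\C(N_2,\fa)\not\subset\sing(\fa)$ forces this common set to be $N_0$ together with one separatrix leaf, namely the leaf separating $R_1$ from $R_2$ (equivalently $R_3$ from $R_4$); this is exactly the configuration of the pairing (P.NI.2) of Figure~\ref{Figure:dimensiontwo}. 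Carrying out this step rigorously means excluding the disconnected-trace configurations one by one with the leaf-topology constraints of Theorems~\ref{topology of boundary} and~\ref{Theorem:partialstabilitytheorem}, which is where the real work lies.

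Once the configuration is known, the construction of the invariant solid torus is the same as in Lemma~\ref{Lemma:eliminationalldimensionone}. For $j=1,2$ a leaf $L_j\subset\C(N_j,\fa)$ meeting $U$ has $L_j\setminus(L_j\cap U)$ equal to a torus minus a strip, hence a cylinder — by Remark~\ref{Remark:torusstrip} the complement of such a strip is a cylinder even when the removed strip is a neighbourhood of a $(1,p)$-curve. By Theorem~\ref{Theorem:partialstabilitytheorem} and Proposition~\ref{Proposition:trivialneighborhood}, an outer leaf $L$ with $L\cap\C(N_j,\fa)=\emptyset$ is obtained by gluing two such cylinders along their boundary circles, so $L$ is a torus; thus each $\C(N_j,\fa)$ is a solid torus bounded by a torus leaf, and Proposition~\ref{Lemma:solidtorus} (applied to $N_j$ paired with the one-dimensional saddle $N_0$) shows that $L$ bounds, on the side containing $N_0\cup N_1\cup N_2$, a region $V$ which is a union of two solid cylinders, hence a solid torus. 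Its boundary $\po V=L$ is $\fa$-invariant; $V$ contains $N_0,N_1,N_2$ and $\Lambda(N_0)$, and because $\fa$ has no saddle connections and finitely many singular components, $\sing(\fa)\cap V=N_0\cup N_1\cup N_2$. One then replaces $\fa\big|_V$ — a compact Bott-Morse foliation of a solid torus with invariant boundary — by the standard foliation of $S^1\times\ov D^2$ by concentric tori centred on a one-dimensional center, agreeing with $\fa$ near $\po V$, which proves the lemma.
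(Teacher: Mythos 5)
Your overall strategy (localize at a distinguished neighborhood $U\cong D\times S^1$ of $N_0$, decide which of the four sectors $R_1,\dots,R_4$ each basin meets, then assemble the invariant solid torus from the outer leaves) is the right one, but the configuration you settle on is the wrong one, and this is not a cosmetic slip. From ``each basin meets at most two of the $R_j$ and the basins are disjoint'' you jump to ``$\C(N_1,\fa)\cap U\subset R_1$ and $\C(N_2,\fa)\cap U\subset R_3$'', i.e.\ opposite sectors. That is exactly the configuration of the twin Lemma~\ref{Lemma:eliminationalldimensionone}, and it is incompatible with the present hypothesis $\po\mathcal C(N_1,\fa)\cap\po\mathcal C(N_2,\fa)\not\subset\sing(\fa)$. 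Indeed, if $\C(N_1,\fa)\cap U\subset R_1$, then by Proposition~\ref{Proposition:trivialneighborhood} a leaf $L_1\subset\C(N_1,\fa)$ close to $\po\C(N_1,\fa)$ meets once each transverse fibre over the components of $\Lambda(N_0)\setminus U$ on which it accumulates, hence re-enters $U$ along \emph{every} local separatrix sheet contained in the separatrix leaves of $\po\C(N_1,\fa)$; since $L_1\cap U\subset R_1$, those leaves can contain no sheet bounding $R_2$, $R_3$ or $R_4$. The same argument for $\C(N_2,\fa)\cap U\subset R_3$ shows its boundary separatrices contain only sheets bounding $R_3$. The two boundaries therefore share no separatrix leaf, so $\po\C(N_1,\fa)\cap\po\C(N_2,\fa)=N_0$ --- the hypothesis of the \emph{other} lemma. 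In particular the self-saddle-connection you invoke (one leaf containing both the sheet between $R_1,R_2$ and the sheet between $R_3,R_4$) cannot lie in $\po\C(N_1,\fa)$: a leaf of $\C(N_1,\fa)$ shadowing it would be forced into $R_3\cup R_4$.

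The case analysis must instead produce the configurations compatible with a nonsingular intersection of boundaries. After excluding, via $c(\fa)>s(\fa)$, the case in which the traces of the two basins together with $\Lambda(N_0)$ fill a whole neighborhood of $N_0$ (so that their union would be open and closed in $M$), the surviving configuration is, up to reordering, $\C(N_1,\fa)\cap U\subset R_1$ and $\C(N_2,\fa)\cap U\subset R_2\cup R_4$ with both intersections nonempty --- the two sectors \emph{adjacent} to $R_1$, as in the pairing (P.NI.2). Then $\po\C(N_1,\fa)\subsetneqq\po\C(N_2,\fa)=\Lambda(N_0)$, a leaf $L_2\subset\C(N_2,\fa)$ meets $U$ in \emph{two} strips (so $L_2\setminus(L_2\cap U)$ is a torus minus two parallel strips, i.e.\ two annuli, not one cylinder), and the outer torus leaf bounding the required solid torus $V$ is obtained by gluing such a two-annulus piece back along two strips --- not by gluing two cylinders each coming from a single strip. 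So both the identification of the configuration and the final assembly of $V$ need to be redone; the parts of your argument that do survive are the reduction of each basin's trace to at most two sectors and the use of Theorem~\ref{Theorem:partialstabilitytheorem} and Lemma~\ref{Lemma:solidtorus} once the correct configuration is in hand.
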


\begin{proof}
We use the same notation above. Considering the  several
possibilities for the trace of $\C(N_1,\fa)$ and $\C(N_2,\fa)$ in
$U $ under the assumption that $\C(N_1,\fa)\cap \C(N_2,\fa) =N_0$
we obtain, after reordering the regions $R_j$, that  the only
possibilities are:

\smallskip
\noindent{\bf First case}. $\C(N_1,\fa)\cup \C(N_2,\fa) \cup
\Lambda(N_0)$ is a neighborhood of $N_0$.

\smallskip
This corresponds to the case where  four regions $R_j$ are
intersected by the basins and the region $\Omega\subset M$
obtained as the union of these basins and $\Lambda(N_0)$ is open
and closed in $M$. Thus $\Omega=M$ and $\sing(\fa) = N_0 \cup N_1
\cup N_2$, contradicting our hypothesis that $c(\fa) > s(\fa)$.
This case cannot occur.

\noindent{\bf Second case}. In this case $\C(N_1,\fa)\cap U
\subset R_1$ and $\C(N_2,\fa)\cap U \subset R_2 \cup R_4$ with
$\C(N_2,\fa) \cap R_2 \ne \emptyset$ and $\C(N_2,\fa) \cap R_4 \ne
\emptyset$.

\begin{Claim} We can choose an invariant  solid torus  $V \subset M$ with boundary
$\po V$,  invariant by $\fa$ and such that $V$ contains a
neighborhood of $N_0$ ; also, $V$  contains $\Lambda(N_0)$ and
$\sing(\fa) \cap V$ consists of $N_0$ and two other components
$N_1$, $N_2$ of center-type.
\end{Claim}

\begin{proof}[Proof of the claim]
Using the same notation as above we consider an outer leaf  $L$  of
$\fa$. Then $L\cap U $ has two connected components (those close
to $\C(N_2,\fa) \cap U $) so that by Proposition
~\ref{Proposition:trivialneighborhood}, $L$ is the union of two
strips (the result of deleting two parallel strips in the torus)
glued by their boundaries, thus  resulting in a torus. This leaf bounds a
region which is the union of two solid cylinders glued by their
common boundaries, resulting in a solid torus.
\end{proof}

This completes the proof of the lemma.
\end{proof}

\begin{figure}[ht]
\begin{center}
\includegraphics[scale=0.45]{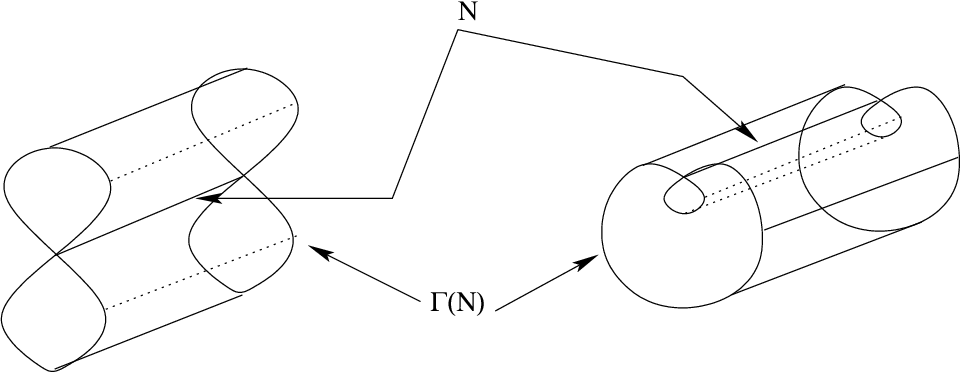}
\end{center}
\caption{} \label{Figure:twotori1}
\end{figure}


\begin{figure}[ht]
\begin{center}
\includegraphics[scale=0.45]{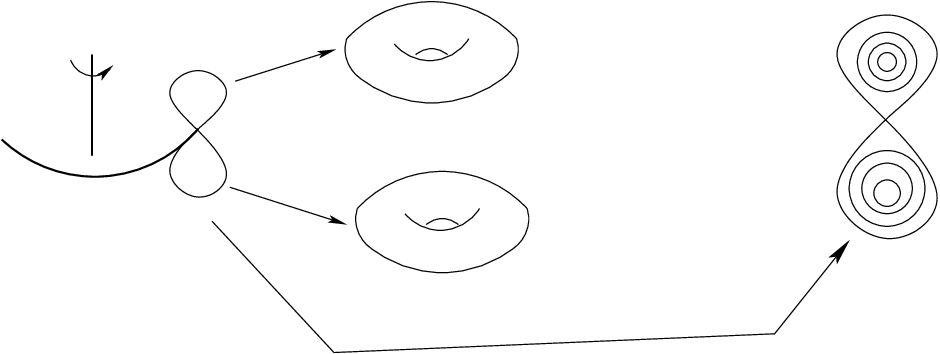}
\end{center}
\caption{} \label{Figure:twotori2}
\end{figure}


\begin{figure}[ht]
\begin{center}
\includegraphics[scale=0.4]{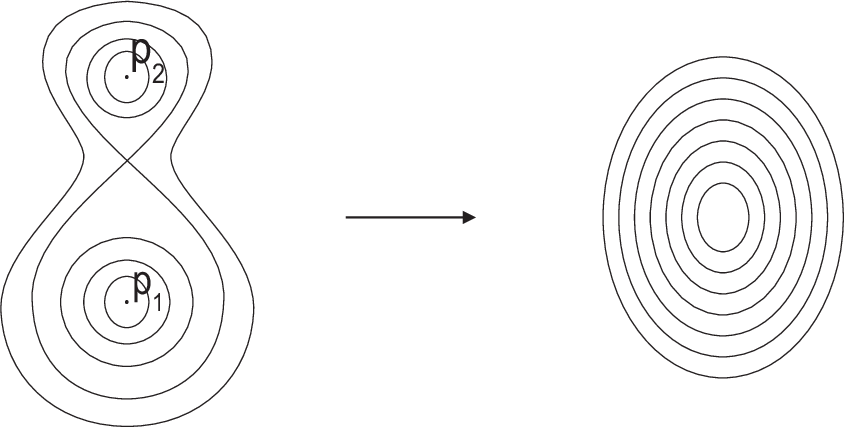}
\end{center}
\caption{} \label{Figure:eliminationoftwocentersbyone}
\end{figure}

\vglue.1in

\begin{figure}[ht]
\begin{center}
\includegraphics[scale=0.45]{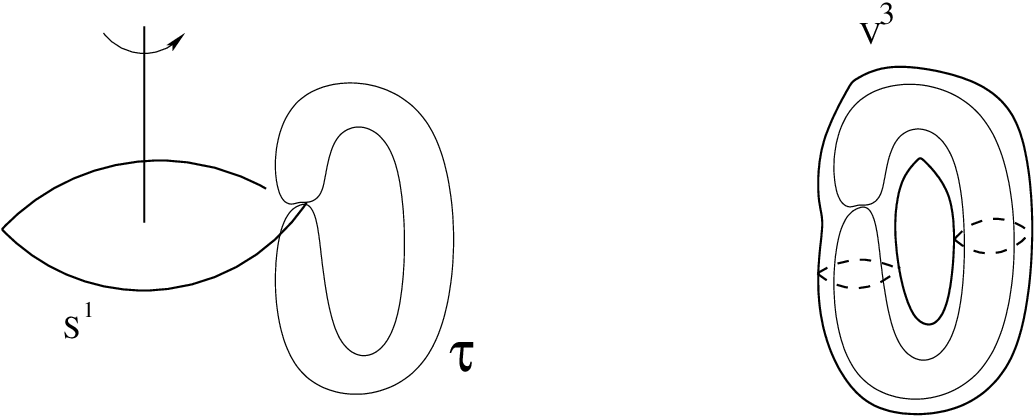}
\end{center}
\caption{}
\end{figure}


\begin{figure}[ht]
\begin{center}
\includegraphics[scale=0.4]{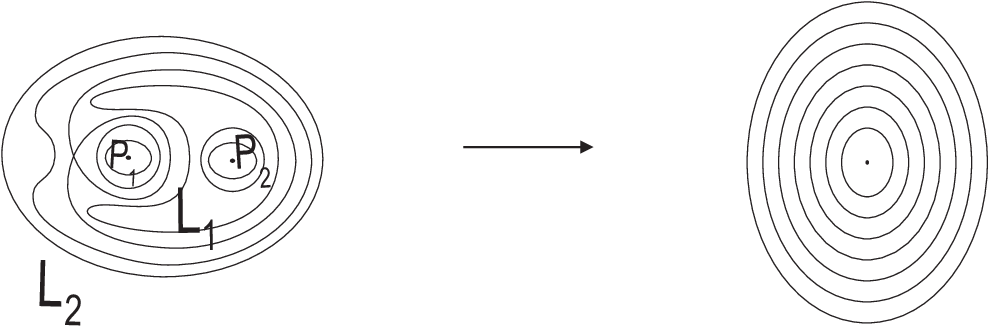}
\end{center}
\caption{Transverse section showing the elimination procedure}
\end{figure}


The remaining cases  are the  content of the following lemmas:

\begin{Lemma}[$N_0\cong S^1$, mixed dimensions, singular intersection of boundaries]
\label{Lemma:nonisolatedmixedsingular} Assume now that
$N_0\subset \sing(\fa)$ is a non-isolated saddle component such
that $N_0=
\partial \mathcal C(N_1,\fa)\cap \partial \mathcal C(N_2,\fa)$ for
center-type components $N_1, N_2 \subset \sing(\fa)$, where $N_1$
is non-isolated and $N_2$ is isolated. Then there is a closed
invariant ball $B^3\subset M$ containing $N_0\cup N_1 \cup N_2$,
such that we can replace $\fa$ in $B^3$ by a foliation with an
isolated center.
\end{Lemma}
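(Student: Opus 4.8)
The plan is to follow the pattern of the preceding lemmas: describe $\fa$ in a distinguished neighbourhood $U=U(N_0)$, identify the leaves in and around the two basins, extract an invariant $3$-ball containing $N_0\cup N_1\cup N_2$, and finish with a foliated surgery. Since $N_0$ has transverse Morse index $1=m-n_0-1$, by Theorem~\ref{Theorem:localproduct} its holonomy is trivial and $\fa|_U$ is the lift to a trivial bundle $\Sigma\times S^1$ of a planar foliation on a disc $\Sigma$ with a single Morse saddle at $0$; the two separatrix lines cut $U$ into four cyclically adjacent conical sectors $R_1,\dots,R_4$, all meeting along $N_0\cong\{0\}\times S^1$. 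By Corollary~\ref{Lemma:basinopen-dim3}, the interior of $\C(N_2,\fa)$ is a $3$-ball with spherical leaves, $\ov{\C(N_2,\fa)}$ a closed $3$-ball, while the interior of $\C(N_1,\fa)$ is a solid torus with toral leaves and $\ov{\C(N_1,\fa)}$ a closed solid torus.

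First I would pin down the local configuration. Each separatrix leaf of $N_0$ lies in the boundary of exactly one basin, and by hypothesis $\partial\C(N_1,\fa)\cap\partial\C(N_2,\fa)=N_0$; hence the two basins share no separatrix leaf, so $N_0$ has no self-saddle-connection and $\Lambda(N_0)\setminus N_0=L_1'\uplus L_2'$ with $L_1'\subset\partial\C(N_1,\fa)$ and $L_2'\subset\partial\C(N_2,\fa)$ distinct leaves. Examining the traces of the two basins on a transversal disc as in the proof of Lemma~\ref{Lemma:eliminationalldimensionone}, and using that their boundaries meet only along $N_0$, one gets after reindexing that $\C(N_1,\fa)\cap U\subset R_1$ and $\C(N_2,\fa)\cap U\subset R_3$ sit in opposite sectors, with $R_2\cup R_4$ meeting only leaves outside both basins --- were the occupied sectors adjacent, or a basin to meet two sectors, a separatrix leaf would be common to both boundaries. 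It then follows from the local model, Proposition~\ref{Lemma:productN_0} and Remark~\ref{Remark:torusstrip} that $\partial\C(N_1,\fa)$ is a torus containing $N_0$ as a non-separating curve with $L_1'\setminus(L_1'\cap U)$ a cylinder, while $\partial\C(N_2,\fa)$ is a $2$-sphere containing $N_0$ as an equator with $L_2'\setminus(L_2'\cap U)$ a disjoint union of two $2$-discs.

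Next I would show that every leaf $L_e$ close to $\Lambda(N_0)$ and disjoint from $\C(N_1,\fa)\cup\C(N_2,\fa)$ is a $2$-sphere. Such an $L_e$ meets $U$ in the two opposite sectors $R_2,R_4$, so by Proposition~\ref{Lemma:productN_0} and the triviality of the holonomy of $N_0$ the set $L_e\cap U$ is a disjoint union of two annuli; and by Theorem~\ref{Theorem:partialstabilitytheorem}, applied componentwise to the components of $\Lambda(N_0)\setminus(\Lambda(N_0)\cap U)$, the set $L_e\setminus(L_e\cap U)$ is homeomorphic to $(L_1'\setminus U)\uplus(L_2'\setminus U)$, i.e.\ to one cylinder and two $2$-discs. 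Reassembling $L_e$ from these five pieces along their boundary circles (four on each side, matching up), an Euler characteristic count gives $\chi(L_e)=0+1+1+0+0=2$, so the closed orientable surface $L_e$ is $S^2$. I expect this reconstruction of the external leaf from the local product structure together with the componentwise Partial Stability Theorem to be the main obstacle.

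Finally I would choose one such $2$-sphere leaf $L_e$ so close to $\Lambda(N_0)$ that the region $B=R(L_e)$ it bounds on the side of $N_0\cup N_1\cup N_2$ meets $\sing(\fa)$ only in $N_0,N_1,N_2$; this is possible because $\sing(\fa)$ is finite and the leaves in the product collar lying between $\partial\C(N_1,\fa)\cup\partial\C(N_2,\fa)$ and $L_e$ accumulate on no singular component. The leaves of $\fa|_B$ and the decomposition of $\fa|_U$ just determined are exactly those of the rotation model of \S\ref{subsection:irreduciblepairingsdim3}, obtained by revolving the planar picture of Figure~\ref{Figure:isolated2} about the axis through $N_2$; hence $B$ is the revolution of a $2$-disc, i.e.\ a closed $3$-ball, with invariant boundary $\partial B=L_e\cong S^2$. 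A foliated surgery in the sense of Definition~\ref{Definition:surgery} now replaces $\fa|_{B}$ by the foliation of $B\cong D^3$ by $2$-spheres concentric to an interior point; the two foliations agree near $\partial B$ (both by $2$-spheres parallel to the boundary), so the resulting closed Bott-Morse foliation on $M$ has $N_0,N_1,N_2$ replaced by a single isolated center.
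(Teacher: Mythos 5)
Your proof is correct and follows essentially the same route as the paper's: the four-sector decomposition of a distinguished neighborhood of $N_0$, exclusion of the adjacent-sector configurations via the shared-separatrix contradiction, identification of the interior leaf pieces (two discs from the spherical side, a cylinder from the toral side), and reassembly of the exterior leaf as a $2$-sphere bounding an invariant $3$-ball. Your explicit accounting of the two annuli of $L_e\cap U$ and the Euler characteristic count merely make precise a gluing the paper treats implicitly, so there is nothing substantive to add.
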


\begin{proof} We keep the notation of the proof of the above lemmas.
We have the following possibilities, up to reordering the regions
$R_1,...,R_4$ defined by $\Lambda(N_0)$ in the distinguished
neighborhood $U $ of the saddle $N_0$.

\vglue.1in \noindent{\bf First case}. $\C(N_2,\fa)\cap U \subset
R_1$ and $\partial C(N_1,\fa) \cap U  \subset R_3$. A leaf
$L_2\subset \C(N_2,\fa)$  is a $2$-sphere and $L_2\cap U $ is a
neighborhood of an equator in $L_2$, so that $L_2\setminus
(L_2\cap U )$ is the disjoint union of two discs $D^+$ and $D^-$.
A leaf $L_1\subset \C(N_1,\fa)$ is a torus and $L_1 \cap U $ is a
strip in the torus, so that $L_1\setminus (L_1\cap U )$ is a
cylinder. By Proposition~\ref{Proposition:trivialneighborhood},  an
exterior leaf $L$  to $\Lambda(N_0)$  is homeomorphic to the union
of the discs $D^+$ and $D^-$ and a cylinder, through their common
boundaries; so  $L$ is homeomorphic to the $2$-sphere. Also, $L$
bounds a region $\Omega$ containing the
 union $\C(N_1,\fa) \cup \C(N_2,\fa) \cup U $. This region is  homeomorphic to
 the union of a solid cylinder (obtained from the solid torus $\C(N_1,\fa)$ by deleting the
 neighborhood $U $ which is of product type by $S^1$) with  two solid hemispheres $H^+$ and
 $H^-$ (obtained from the solid ball $\C(N_2,\fa)$ by deleting the solid intersection
 with $U $) so that $\Omega$ is a solid ball.
The situation is depicted in  (Figure~\ref{Figure:sphere3}).

 \vglue.1in

 \noindent{\bf Second case}. $\C(N_1,\fa)$ and $\C(N_2,\fa)$
 intersect some adjacent regions.
 In this case the intersection $\partial C(N_1,\fa)\cap\partial
 \C(N_2,\fa)$ contains some separatrix of $N_0$, what is not
 possible by hypothesis.

 This proves the lemma.
 \end{proof}

\begin{Lemma}[$N_0\cong S^1$, mixed dimensions, non-singular intersection of boundaries]
\label{Lemma:nonisolatedmixednonsingular} Assume now that
$N_0\subset \sing(\fa)$ is a non-isolated saddle component such
that $N_0=
\partial \mathcal C(N_1,\fa)\cap \partial \mathcal C(N_2,\fa)\not \subset \sing(\fa)$ for
center-type components $N_1, N_2 \subset \sing(\fa)$, where $N_1$
is non-isolated and $N_2$ is isolated. Then there is a closed
invariant region $\Omega\subset M$ diffeomorphic to the product
$S^2 \times [0,1]$ or to the product $S^1 \times S^1 \times [0,1]$,
containing $N_0\cup N_1$, such that we can replace $\fa$ in
$\Omega$ by a regular foliation by $2$-spheres or tori
respectively. The center $N_2$ appears at infinity with respect to
$\Omega$.
\end{Lemma}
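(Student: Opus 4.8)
The plan is to run the scheme of Lemmas~\ref{Lemma:nonisolatedmixedsingular} and~\ref{Lemma:eliminationalldimensiononenonsingular}, now in the alternative that was excluded there by the singular-intersection hypothesis. Since $N_0$ has dimension one, its transverse Morse index $r$ satisfies $0<r<m-n=2$, so $r=1$; by Theorem~\ref{Theorem:localproduct} and Proposition~\ref{Lemma:productN_0}, $N_0\cong S^1$ has trivial holonomy and admits a distinguished neighbourhood $U=U(N_0)\cong D^2\times S^1$ on which $\fa$ is the lift of a planar Morse saddle, the separatrix set $\Lambda(N_0)$ cutting $U$ into four cyclically adjacent sectors $R_1,R_2,R_3,R_4$ exactly as in the proof of Lemma~\ref{Lemma:eliminationalldimensionone}. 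By Corollary~\ref{Lemma:basinopen-dim3} the interior of $\C(N_1,\fa)$ is a solid torus foliated by tori, while the interior of $\C(N_2,\fa)$ is an open $3$-ball foliated by $2$-spheres, with $\partial\C(N_2,\fa)$ a union of two $2$-discs along the circle $N_0$.

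First I would determine how the two basins sit inside $U$. Since $\partial\C(N_1,\fa)\cap\partial\C(N_2,\fa)$ is not contained in $\sing(\fa)$ it contains a separatrix leaf of $N_0$, so the traces of the two basins occupy sectors that share a common separatrix; combining this with Lemma~\ref{Lemma:existssaddlebdbasin} (each $\partial\C(N_j,\fa)$ is $N_0$ together with separatrices of $N_0$), with the absence of saddle connections, and with Theorem~\ref{topology of boundary} (the trace in a transverse disc of a leaf limiting on $N_0$ is one or two arcs), the configuration reduces, up to relabelling, to one of two: either $\C(N_2,\fa)\cap U\subset R_1$ and $\C(N_1,\fa)\cap U\subset R_2$, or $\C(N_1,\fa)\cap U\subset R_1\cup R_3$ and $\C(N_2,\fa)\cap U\subset R_2$. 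The remaining a priori possibility, in which the two basins sweep all four sectors, is ruled out because $\ov{\C(N_1,\fa)}\cup\ov{\C(N_2,\fa)}$ would then be open and closed, hence all of $M$ by Theorem~\ref{Lemma:basinopen}(3); but then $\sing(\fa)=N_0\cup N_1\cup N_2$, contradicting $c(\fa)>2s(\fa)$ (the singular set here is not of pure dimension, so the weaker bound $c(\fa)>s(\fa)$ does not apply).

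In each surviving configuration I would identify the leaves by the Partial Stability Theorem~\ref{Theorem:partialstabilitytheorem} together with Proposition~\ref{Proposition:trivialneighborhood}: a leaf $L$ that meets $U$ but lies in neither basin is homeomorphic to the manifold obtained by gluing, through the product region $U$, the trace in $U$ of a nearby leaf of $\C(N_2,\fa)$ (a $2$-sphere meeting $U$ in a neighbourhood of an equator, hence cut into two $2$-discs) to the trace of a nearby leaf of $\C(N_1,\fa)$ (a torus meeting $U$ in one strip, hence cut into a cylinder, resp.\ in two strips, hence cut into two cylinders); in the first configuration this glues two discs by a cylinder and gives a $2$-sphere, in the second it gives a torus. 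Orientability, compactness and Corollary~\ref{Lemma:basinopen-dim3} (which forces leaves near $N_2$ to be spheres and leaves near $N_1$ to be tori) confirm that these are the only possibilities. I would then take $\Omega$ to be the union of $U(N_0)$, of $\ov{\C(N_1,\fa)}$, and of all leaves homeomorphic to $S^2$ (resp.\ $T^2$) bounding, together with $N_0\cup N_1$, a compact region missing $N_2$; a local-triviality argument as in Remark~\ref{Remark:irreduciblecase} shows $\Omega$ is a compact invariant region whose boundary consists of exactly two leaves, one of which bounds on its far side a region containing $N_2$, which is the meaning of ``$N_2$ at infinity''. Proposition~\ref{Lemma:productN_0}, Proposition~\ref{Proposition:trivialneighborhood} and the identification of the leaves above then yield a diffeomorphism $\Omega\cong S^2\times[0,1]$ (resp.\ $S^1\times S^1\times[0,1]$) carrying $\partial\Omega$ to the slices over $\{0,1\}$; replacing $\fa|_\Omega$ by the product foliation by these slices gives a Bott-Morse foliation on $M$ that agrees with $\fa$ near $\partial\Omega$, is proper whenever $\fa$ is, and creates no saddle connection, which is what is claimed.

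The hard part is the second and third steps: showing that, once the two basin boundaries are forced to share a separatrix, no relative position of $\C(N_1,\fa)$ and $\C(N_2,\fa)$ inside $U$ other than the two listed ones can occur (matching each case against the list in Lemma~\ref{Lemma:existssaddlebdbasin}(ii)), and then correctly extracting from the local data the diffeomorphism type of $\Omega$, in particular deciding which configuration yields $S^2\times[0,1]$ and which yields $S^1\times S^1\times[0,1]$. Once this bookkeeping is done, the surgery and the verification of its properties are a routine transcription of the arguments in Lemmas~\ref{Lemma:nonisolatedmixedsingular} and~\ref{Lemma:eliminationalldimensiononenonsingular} and in Theorem~\ref{topology of boundary}.
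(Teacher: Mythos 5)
Your overall strategy (distinguished neighbourhood, four sectors, enumerate the traces of the two basins, identify the external leaves by partial stability, then build $\Omega$) is the paper's strategy, but the combinatorial core — the list of realizable configurations — comes out wrong, and this is precisely the step you yourself flag as the hard part. Your configuration (a), with the ball basin $\C(N_2,\fa)$ in one sector and the torus basin $\C(N_1,\fa)$ in an adjacent one, cannot occur under the hypothesis that the two boundaries share a separatrix leaf $S$. Indeed, by Proposition~\ref{Proposition:trivialneighborhood} every leaf sufficiently close to $\Lambda(N_0)$ meets each transverse fibre of $W_\nu\setminus U$ exactly once, so $L\setminus(L\cap U)$ is a section over, hence homeomorphic to, the component of $\Lambda(N_0)\setminus U$ containing $S\setminus U$ — for leaves on \emph{both} sides of $S$. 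From the $\C(N_2,\fa)$ side this piece is a $2$-disc (a sphere meeting $U$ in one equatorial strip is cut into two discs), while from the $\C(N_1,\fa)$ side it is a cylinder (Lemma~\ref{Lemma:solidtorus}): a disc is not a cylinder. (This is exactly the comparison the paper uses to exclude one of its own sub-cases.) Conversely, the configuration that actually produces $\Omega\cong S^2\times[0,1]$ is missing from your list: it is the one where the \emph{isolated} center's basin meets $U$ in \emph{two opposite} sectors, so that its sphere leaves meet $U$ in two strips and their complement consists of two discs; only then does $\partial\C(N_2,\fa)$ contain a cylindrical separatrix that can coincide with a separatrix of $\partial\C(N_1,\fa)$, the solid torus $\ov{\C(N_1,\fa)}$ sits in one of the two remaining sectors and is swallowed into $\Omega$, and the external spheres fill the fourth sector.

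A second, smaller but still genuine, error is in your leaf identification: gluing ``two discs to two cylinders'' along boundary circles yields a surface of Euler characteristic $2$, i.e.\ a sphere, never a torus, so your recipe for the second configuration does not produce the claimed $T^2$. The correct assembly of an external leaf $L$ is not a concatenation of pieces taken from leaves of \emph{both} basins: by Theorem~\ref{Theorem:partialstabilitytheorem}, $L$ is its single strip $L\cap U$ (lying in the one free sector) glued to a homeomorphic copy of the particular component of $\Lambda(N_0)\setminus(\Lambda(N_0)\cap U)$ on which $L$ accumulates — one cylinder in the torus case, two discs in the sphere case. Until the sector bookkeeping and these identifications are redone along these lines, the proposal does not establish the lemma. (A minor point: you invoke $c(\fa)>2s(\fa)$ to exclude the ``all four sectors'' case; that inequality is not a hypothesis of the lemma as stated, although it does hold wherever the lemma is applied.)
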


\begin{proof} We keep the notation of the proof of the above lemmas.
Since by hypothesis we have $N_0\subset \partial \C(N_1,\fa)\cap
\partial \C(N_2,\fa) \not\subset \sing(\fa)$, we have the following possibilities,
up to reordering the regions $R_1,...,R_4$ defined by
$\Lambda(N_0)$ in the distinguished neighborhood $U $.

\vglue.1in \noindent{\bf First case}.
 $\C(N_2,\fa)\cap U
\subset (R_2 \cup R_4)$, $\C(N_2,\fa)$ intersects $R_2$ and $R_4$,
$\partial C(N_1,\fa) \cap U  \subset R_3$ and $\C(N_1,\fa)\cap
R_1=\emptyset$. As in the preceding lemmas, a leaf
$L_2\subset \C(N_2,\fa)$ is a $2$-sphere and therefore its trace
in $U $ consists of two parallel strips,  {\it i.e.}, two
neighborhoods of meridians,  and $L_2\setminus (L_2\cap U )$ is
the union of two disjoint discs. A leaf $L_1\subset \C(N_1,\fa)$
is a torus and its trace in $U $ is a strip, so that $L_1\setminus
(L_1\cap U )$ is a cylinder. A leaf $L$ intersecting $R_1$ cannot
belong to $\C(N_1,\fa)$ nor to $\C(N_2,\fa)$. Nevertheless the
above remarks and
Proposition~\ref{Proposition:trivialneighborhood} imply that $L$
is also a $2$-sphere. Thus we can find a region $\Omega$ bounded
by leaves in $\C(N_2,\fa)$ diffeomorphic to $S^2$, and by leaves
intersecting  $R_1$, also diffeomorphic to $S^2$. The region
$\Omega$ contains $N_0$ and $N_1$ and is diffeomorphic to the
product $S^2 \times [0,1]$.

\vglue.1in

\noindent{\bf Second case}.  $\C(N_1,\fa)\cap U  \subset (R_2 \cup
R_4)$, $\C(N_1,\fa)$ intersects $R_2$ and $R_4$, $\partial
C(N_2,\fa) \cap U  \subset R_3$ and $\C(N_2,\fa)\cap
R_1=\emptyset$. In this case we apply
Proposition~\ref{Proposition:trivialneighborhood} to compare the
leaves in $\C(N_1,\fa)$ and the leaves in $\C(N_2,\fa)$ also
through their traces in $U $. This implies that a torus minus two
neighborhoods of parallel meridians is homeomorphic to a
$2$-sphere minus a neighborhood of the equator, what is a
contradiction. Hence this case cannot happen.

\vglue.1in \noindent{\bf Third case}.  $\C(N_1,\fa)\cap U \subset
(R_2 \cup R_4)$, $\C(N_1,\fa)$ intersects $R_2$ and $R_4$,
$\partial C(N_2,\fa) \cap U  \subset R_1$ and $\C(N_2,\fa)\cap
R_3=\emptyset$. In this case we apply
Proposition~\ref{Proposition:trivialneighborhood} to
compare   the leaves in $\C(N_1,\fa)\setminus (\C(N_1,\fa)
\cap U )$ and the leaves in $\C(N_2,\fa)\setminus (\C(N_2,\fa)\cap
U )$  through their traces in $U $. We conclude that the leaves
intersecting $R_3$ are tori and  we obtain an
invariant  region  $\Omega\subset M$ diffeomorphic to $S^1 \times
S^1 \times [0,1]$, containing $N_1$ and $N_0$, where we can replace
the foliation $\fa$ by a non-singular foliation by tori. The
center $N_2$ is at infinity with respect to $\Omega$.

\end{proof}

From the above lemmas we have:

\begin{Proposition}
\label{Proposition:nonisolatedsaddle} Let $\fa$ be a closed
Bott-Morse foliation on a closed $3$-manifold $M$ such that
$c(\fa) > 2 s(\fa)$ or else $\sing(\fa)$ has pure dimension and
$c(\fa) > s(\fa)$. Suppose that there is a non-isolated saddle
$N_0\subset\sing(\fa)$ paired with two centers $N_1,
N_2\subset \sing(\fa)$. Then we have the following possibilities:

\begin{itemize}
\item[{\rm(i)}] If  $N_1$ and $N_2$ are isolated, then the pairing
$N_0\leftrightarrow (N_1,N_2)$ is trivial and we have the
following possibilities:
\begin{itemize}
\item[{\rm (i.1)}]  $\partial\mathcal C(N_1, \fa)\cap
\partial\mathcal C(N_2, \fa)\not\subset \sing(\fa)$ and there is
 a solid cylinder $V\simeq D^2\times [0,1]$ with boundary
$\po V$ consisting of two invariant discs $D_1\cong D^2\times
\{0\}$ and $D_2 \simeq D^2\times \{1\}$ and a transverse open
cylinder $\Sigma\cong S^1\times (0,1)$. We have $\sing(\fa) \cap
V=N_0\cup N_1\cup N_2$ and $\fa\big|_V$ can be replaced by a
trivial foliation by discs.

\item[{\rm (i.2)}] If $\partial\mathcal C(N_1, \fa)\cap
\partial\mathcal C(N_2, \fa)=N_0$, then there is an invariant
region $R$ diffeomorphic to $S^2 \times [0,1]$ and containing
$N_0, N_1,N_2$, so that we can replace $\fa$ in $R$ by a regular
foliation by $2$-spheres.
\end{itemize}

\item[{\rm(ii)}] If  $N_1$ and $N_2$ are non-isolated, then there
is an invariant solid torus $V$ containing $N_0, N_1, N_2$ and we
can replace  $\fa\big|_V$ by a foliation by concentric tori
having a single non-isolated center as singular set.

\item[{\rm(iii)}] If $N_1$ is non-isolated and $N_2$ is isolated,
then  there is a closed invariant ball $B^3\subset M$ containing
$N_0\cup N_1 \cup N_2$, such that we can replace $\fa$ in $B^3$ by
a foliation with an isolated center.

\end{itemize}
 \end{Proposition}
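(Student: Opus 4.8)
The plan is to derive the statement by assembling the lemmas proved above in this subsection, organizing the discussion along the hierarchy announced at the beginning of Section~\ref{section:BottMorsedimension3}: one splits into cases according to the dimensions of the two centers $N_1,N_2$ (each of which is $0$ or $1$, since $\dim M=3$ and every component of $\sing(\fa)$ has codimension $\ge 2$) and according to whether or not $\partial\C(N_1,\fa)\cap\partial\C(N_2,\fa)$ is contained in $\sing(\fa)$, that is, whether it equals $N_0$ or strictly contains a separatrix of $N_0$. First I would record the facts common to all cases: since $N_0$ is a non-isolated saddle paired with both $N_1$ and $N_2$, its being in $\partial\C(N_j,\fa)$ means these boundaries are non-empty, so Theorem~\ref{Lemma:basinopen}(2) shows that $N_0$ is the \emph{unique} saddle component of $\sing(\fa)$ in each $\partial\C(N_j,\fa)$ and that each such boundary is the union of $N_0$ with separatrices of $N_0$; and by Theorem~\ref{Theorem:localproduct} the holonomy of $N_0$ is trivial, so $\fa$ has a bundle-type product structure on a distinguished neighborhood $U=U(N_0)$ whose four conical sectors $R_1,\dots,R_4$, cut out by the separatrices, are the local pieces in terms of which all the lemmas below are phrased.

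Next I would run through the combinations and invoke the corresponding lemma. If $N_1$ and $N_2$ are both isolated centers, their basins are open $3$-balls foliated by concentric $2$-spheres: the lemma treating two isolated centers with non-singular intersection of boundaries yields the solid cylinder $V\simeq D^2\times[0,1]$ of item (i.1), and the lemma treating two isolated centers with $\partial\C(N_1,\fa)\cap\partial\C(N_2,\fa)=N_0$ yields the region $R\simeq S^2\times[0,1]$ of item (i.2); in both situations $\fa|_R$ is replaced by a regular foliation (by discs, respectively by $2$-spheres) that coincides with $\fa$ near $\partial R$, so by Definition~\ref{Definition:Redpairing} the pairing $N_0\leftrightarrow(N_1,N_2)$ is trivial. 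If $N_1$ and $N_2$ are both one-dimensional, then Lemma~\ref{Lemma:eliminationalldimensionone} (when $\partial\C(N_1,\fa)\cap\partial\C(N_2,\fa)=N_0$) and Lemma~\ref{Lemma:eliminationalldimensiononenonsingular} (when the intersection is not contained in $\sing(\fa)$) each produce an invariant solid torus $V\supset N_0\cup N_1\cup N_2$ on which $\fa$ is replaced by a foliation by concentric tori with a single non-isolated center, which is item (ii). Finally, if one center is one-dimensional and the other isolated, Lemma~\ref{Lemma:nonisolatedmixedsingular} directly gives the invariant ball $B^3$ of item (iii) when $\partial\C(N_1,\fa)\cap\partial\C(N_2,\fa)=N_0$; and when the intersection is not contained in $\sing(\fa)$, Lemma~\ref{Lemma:nonisolatedmixednonsingular} produces an invariant region diffeomorphic to $S^2\times[0,1]$ or to $S^1\times S^1\times[0,1]$ containing $N_0\cup N_1$ with the isolated center $N_2$ ``at infinity'', and I would then glue the foliated $3$-ball $\overline{\C(N_2,\fa)}$ onto the suitable spherical boundary component to obtain the ball $B^3$ and to see that the resulting foliation has a single isolated center.

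What remains is to check exhaustiveness and the role of the numerical hypothesis. The inequality $c(\fa)>2s(\fa)$ — or $c(\fa)>s(\fa)$ in the pure-dimensional situation, in which case the mixed case (iii) does not even arise — enters only through the lemmas being quoted (most visibly in those involving one-dimensional centers, e.g.\ Lemma~\ref{Lemma:solidtorus} and Lemma~\ref{Lemma:eliminationalldimensiononenonsingular}), where it excludes the degenerate configuration in which $\C(N_1,\fa)\cup\C(N_2,\fa)\cup\Lambda(N_0)$ is open and closed in $M$, forcing $M=\overline{\C(N_1,\fa)}\cup\overline{\C(N_2,\fa)}$ and $\sing(\fa)=N_0\cup N_1\cup N_2$. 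I expect the only real work to be bookkeeping rather than new geometry: one must verify that every admissible arrangement of $\C(N_1,\fa)\cap U$ and $\C(N_2,\fa)\cap U$ among the sectors $R_1,\dots,R_4$ is covered by one of the listed lemmas — in particular that, in the mixed-dimensional setting, an arrangement placing a separatrix of $N_0$ in $\partial\C(N_1,\fa)\cap\partial\C(N_2,\fa)$ is impossible, which is precisely the excluded ``second case'' inside Lemma~\ref{Lemma:nonisolatedmixednonsingular} — and to confirm that the gluing step in the non-singular mixed case really yields the ball $B^3$ asserted in (iii) rather than the a priori larger collar pieces.
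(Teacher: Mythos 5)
Your proposal matches the paper's own treatment, which consists of nothing more than the phrase ``From the above lemmas we have'' followed by the statement: the proposition is simply a compilation of Lemmas~\ref{Lemma:solidtorus}--\ref{Lemma:nonisolatedmixednonsingular}, organized by the same hierarchy (dimensions of the centers, whether the boundaries meet only in $N_0$) that you use. Your extra gluing step reconciling the collar regions of Lemma~\ref{Lemma:nonisolatedmixednonsingular} with the ball asserted in item (iii) supplies a detail the paper leaves implicit, and is a reasonable completion rather than a deviation.
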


We compile the
information in Propositions~\ref{Proposition:isolatedsaddlecases}
and \ref{Proposition:nonisolatedsaddle} in the following theorem:

\begin{Theorem}
\label{Theorem:onlywhatweneed} Let $\fa$ be a closed Bott-Morse
foliation on a closed oriented three-manifold $M$ such that either
$c(\fa) > 2 s(\fa)$ or else $\sing(\fa)$ has pure dimension and
$c(\fa) > s(\fa)$. Suppose we have two different center components
$N_1, N_2 \in \Cent(\fa)$ and a saddle component $N_0 \in
\Sad(\fa)$ such that $N_0 \subset
\partial \mathcal C(N_1,\fa) \cap \partial \mathcal C(N_2,\fa)$.
We have the following possibilities:

\begin{itemize}

\item[{\rm(1)}] The saddle $N_0$ is isolated. Then either $N_0, N_1, N_2$
belong to a same disconnected irreducible component of $\fa$ or
one of the centers, say $N_1$, must be isolated and we have the following possibilities:

\item[{\rm(1.i)}] If $N_2$ is also isolated, then one of the
pairings $N_0\leftrightarrow N_j$ is trivial.

\item[{\rm(1.ii)}] If $N_2$ is non-isolated, then there are two
possibilities: either the pairing $N_0\leftrightarrow N_1$ is
trivial or $\fa$ can be modified in  a solid torus replacing
$N_0\cup N_1$ by  a one-dimensional center.

\item[{\rm ($\star$)}] If $N_0, N_1, N_2$ belong to a same
irreducible component, then there is an invariant compact region
$\Omega\subset M$ containing these singularities and the
restriction $\tilde \fa$ of $\fa$ to $\tilde M=M \setminus \Omega$
is a closed Bott-Morse foliation which satisfies $c(\tilde \fa) > 2
s(\tilde\fa) \geq 2$. Also, given a center
$N\subset\sing(\fa)\setminus (N_1 \cup N_2)$ we have $ \C(N,\fa)
\cap \Omega=\emptyset$.

\item[{\rm(2)}]  The saddle $N_0$ is non-isolated. Then we have the following possibilities:

\item[{\rm(2.i)}] If  $N_1$ and $N_2$ are isolated, then the
pairing $N_0\leftrightarrow (N_1,N_2)$ is trivial.

\item[{\rm(2.ii)}] If  $N_1$ and $N_2$ are non-isolated, then we
can modify $\fa$ in an invariant solid torus replacing  $N_0, N_1,
N_2$ by  a single non-isolated center.

\item[{\rm(2.iii)}] If $N_1$ is non-isolated and $N_2$ is
isolated, then we can modify $\fa$ in an invariant solid ball
replacing $N_0\cup N_1 \cup N_2$ by  an isolated center.

\end{itemize}

\end{Theorem}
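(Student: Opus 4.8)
The plan is to read the statement as an almost purely bookkeeping consequence of the two structure results proved just above, Propositions~\ref{Proposition:isolatedsaddlecases} and~\ref{Proposition:nonisolatedsaddle}, and to do genuine work only on the ``disconnected'' alternative ($\star$) of case (1). Indeed, everything listed under (1) except ($\star$) --- the dichotomy ``disconnected irreducible component versus one isolated center'', together with (1.i) and (1.ii) --- is a restatement of Proposition~\ref{Proposition:isolatedsaddlecases}, and (2.i), (2.ii), (2.iii) are a restatement of Proposition~\ref{Proposition:nonisolatedsaddle}. So the first step is simply to quote those two propositions, which already disposes of every item save ($\star$).

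For ($\star$) I would first note that in this situation $\fa$ necessarily satisfies $c(\fa)>2s(\fa)$: the presence of the isolated saddle $N_0$ excludes the pure one-dimensional alternative, and the pure zero-dimensional alternative excludes the non-isolated centers $N_1,N_2$ that occur here. By Lemma~\ref{Lemma:someisnotisolated} the triple $N_0,N_1,N_2$ then sits inside one of the two explicit disconnected irreducible models of \S\ref{subsection:irreduciblepairingsdim3}, the bitorus type or the torus-ball type. I would take $\Omega$ to be the closure of the union of the bounded regions $R(L)$ cut out by those leaves $L$ of $\fa$ which are homeomorphic to the ``outer'' leaf of the model and which contain $N_0\cup N_1\cup N_2$ in their interior; by construction $\Omega$ is a compact, connected, invariant region containing $N_0\cup N_1\cup N_2$ and all separatrices of $N_0$. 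Applying the argument of Remark~\ref{Remark:irreduciblecase} (no center can lie in $\partial\Omega$, and $\partial\Omega=\emptyset$ would force $\sing(\fa)=\{N_0,N_1,N_2\}$, contradicting $c(\fa)>2s(\fa)$), $\partial\Omega$ contains a saddle component $N_0'$ distinct from $N_0,N_1,N_2$; hence $s(\fa)\ge2$, and since $\fa$ has no saddle connections, $\sing(\fa)\cap\overline{\Omega}=\{N_0,N_1,N_2,N_0'\}$.

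Next I would set $\tilde M=M\setminus\Omega$, $\tilde\fa=\fa\big|_{\tilde M}$, and verify, using the explicit normal forms of the two models along $\Lambda(N_0')$, that $\tilde M$ is a manifold on which $\tilde\fa$ is again a closed Bott-Morse foliation, with $N_0'$ surviving as a saddle and $\sing(\tilde\fa)$ equal to the set of components of $\sing(\fa)$ other than $N_0,N_1,N_2$. The counting is then immediate: $c(\tilde\fa)=c(\fa)-2$ and $s(\tilde\fa)=s(\fa)-1$, whence $c(\tilde\fa)-2s(\tilde\fa)=c(\fa)-2s(\fa)>0$ and $s(\tilde\fa)=s(\fa)-1\ge1$, i.e.\ $c(\tilde\fa)>2s(\tilde\fa)\ge2$. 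Finally, for a center $N\subset\sing(\fa)\setminus(N_1\cup N_2)$ I would rule out $\C(N,\fa)\cap\Omega\neq\emptyset$: a leaf $L\subset\C(N,\fa)$ lying in $\Omega$ would carry a whole neighborhood into $\Omega$ (since $\Omega$ is invariant and open in $M\setminus\sing(\fa)$), and sliding these leaves inward through the tubular region of $N$ furnished by Theorem~\ref{Theorem:LocalStability} and Corollary~\ref{Lemma:basinopen-dim3} they would accumulate on $N$, forcing $N\in\overline{\Omega}$ against $\sing(\fa)\cap\overline{\Omega}=\{N_0,N_1,N_2,N_0'\}$.

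The one step that is not routine is the verification in the third paragraph that deleting $\Omega$ really returns a manifold $\tilde M$ carrying a bona fide closed Bott-Morse foliation with $N_0'$ still a saddle; this is exactly where the explicit bitorus and torus-ball normal forms of \S\ref{subsection:irreduciblepairingsdim3} are indispensable, in particular to see how the pinch along $N_0'$ closes up and to be sure the defect $c-2s$ cannot drop. Everything else is a quotation of Propositions~\ref{Proposition:isolatedsaddlecases} and~\ref{Proposition:nonisolatedsaddle} together with the elementary point-set and homological arguments already used in the proof of Lemma~\ref{Lemma:someisnotisolated} and in Remark~\ref{Remark:irreduciblecase}.
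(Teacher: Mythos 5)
Your proposal is correct and follows essentially the same route as the paper: items (1.i), (1.ii), (2.i)--(2.iii) are quoted from Propositions~\ref{Proposition:isolatedsaddlecases} and~\ref{Proposition:nonisolatedsaddle}, and ($\star$) is handled via the region $\Omega$ and the boundary-saddle argument of Remark~\ref{Remark:irreduciblecase}, exactly as the paper does (the paper phrases the bound $s(\fa)\ge 2$ through a third center $N_3$ whose basin boundary yields a saddle $N_0'\ne N_0$, but this is the same content as the Remark you invoke). Your explicit verification that the hypothesis in case ($\star$) must be $c(\fa)>2s(\fa)$ and your proof of the final disjointness claim $\C(N,\fa)\cap\Omega=\emptyset$ are welcome details that the paper leaves implicit.
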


\begin{proof}

Part (2) follows from
Proposition~\ref{Proposition:nonisolatedsaddle}. Parts (1.i),
(1.ii) follow from
Proposition~\ref{Proposition:isolatedsaddlecases} while  ($\star$)
follows from that same proposition,
Remark~\ref{Remark:irreduciblecase} and the following
argumentation:
 Suppose  that $N_0, N_1, N_2$ are in a same
irreducible component $\Omega \subset M$. Since $c(\fa) > 2s(\fa)$
there is another center $N_3$ distinct from $N_1, N_2$ and such
that $\partial \C(N_3, \fa)\cap N_0 = \emptyset$. If $\partial
\C(N_3,\fa)=\emptyset$ then by Theorem~\ref{Lemma:basinopen} (ii) we
have $s(\fa)=0$, absurd. Thus by Theorem~\ref{Lemma:basinopen} (iii)
we have $\partial \C(N_3,\fa)=N_0 ^\prime$ for a saddle
$N_0^\prime \ne N_0$. This shows that $s(\fa) \geq 2$. Thus we can
consider the foliation $\fa$ restricted to the manifold with
boundary $\tilde M:= M \setminus \Omega$. This is a Bott-Morse
foliation $\tilde \fa$ with $\sing(\tilde\fa) = \sing(\fa)
\setminus N_1 \cup N_2 \cup N_0$, so that it satisfies the
inequality $c(\tilde \fa) > 2s(\tilde \fa)$, and   $s(\tilde \fa)
\geq 1$.
\end{proof}

\section{A classification theorem in dimension three}

We now prove:

\begin{Theorem}
\label{Theorem:Center-Saddledimensionthree}
\label{Theorem:Center-Saddledimensionthreecircles}   Let $M^3$ be
a closed oriented connected $3$-manifold equipped with a closed
 Bott-Morse foliation $\fa$ satisfying either $c(\fa) > 2 s(\fa)$ or else $c(\fa) >  s(\fa)$ and its singular set $sing(\fa)$ is pure-dimensional.
 Then{\rm:}

{\rm(i)} there is a deformation of $\fa$ via foliated surgery into
a compact Bott-Morse
 foliation on $M$, and  the holonomy pseudogroup of the foliation is
 preserved off the singular
set; and

 {\rm(ii)}   $M$ is homeomorphic to the 3-sphere, a Lens space or
$S^1\times S^2$.
\end{Theorem}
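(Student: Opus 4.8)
The plan is to prove (i) by induction on the number $s=s(\fa)$ of saddle components and then to deduce (ii) from Theorem~\ref{Theorem D}. If $s=0$, then $\fa$ is closed with no saddles on the compact manifold $M$, hence compact, and (i) holds with the trivial (identity) surgery. Suppose $s\ge 1$. I would first note that \emph{every} center $N\subset\sing(\fa)$ has $\partial\C(N,\fa)\ne\emptyset$: otherwise Theorem~\ref{Lemma:basinopen}(1) would force $\C(N,\fa)=M$ and $s(\fa)=0$. Consequently, by Theorem~\ref{Lemma:basinopen}(2), each center is paired with the unique saddle in the boundary of its basin, and this defines a map from the nonempty set of centers to the set of saddles. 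Since $c(\fa)>s(\fa)$ (which holds under either hypothesis, as $s\ge1$), this map is not injective, so some saddle $N_0$ is paired with two distinct centers $N_1,N_2$, i.e. $N_0\subset\partial\C(N_1,\fa)\cap\partial\C(N_2,\fa)$.

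Feeding this configuration into Theorem~\ref{Theorem:onlywhatweneed}, outside the irreducible case $(\star)$ we obtain a foliated surgery, supported in an invariant region containing $N_0\cup N_1$ (and possibly $N_2$), that replaces $\fa$ by a proper closed Bott-Morse foliation $\fa_m$ on the \emph{same} $M$ with strictly fewer singular components and strictly fewer saddles; moreover it is of the reduction type of the definition following Definition~\ref{Definition:surgery}, so the holonomy pseudogroup is preserved off $\sing(\fa)$. A routine bookkeeping shows the hypothesis survives: eliminating a trivial pairing changes $(c,s)$ to $(c-1,s-1)$ or $(c-2,s-1)$, replacing $N_0\cup N_1$ by a one-dimensional center changes it to $(c,s-1)$, and replacing $N_0\cup N_1\cup N_2$ by a single center changes it to $(c-1,s-1)$; in every case $c'-2s'\ge c-2s>0$, and in the pure-dimensional case only the $(c-1,s-1)$ possibilities occur, the dimensions of the surviving components are unchanged, and $c'-s'=c-s>0$. (Observe that the pure-dimensional hypothesis altogether excludes $(\star)$, since each irreducible component of \S\ref{subsection:irreduciblepairingsdim3} mixes an isolated saddle with a non-isolated center.) Applying the inductive hypothesis to $\fa_m$ finishes (i) in these cases.

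The main obstacle is the irreducible case $(\star)$, where $N_0,N_1,N_2$ lie in an invariant region $\Omega\subset M$ and no surgery on $M$ itself lowers the count. Here I would invoke the conclusions of Theorem~\ref{Theorem:onlywhatweneed}$(\star)$: one has $s(\fa)\ge 2$, the complement $\tilde M=M\setminus\Omega$ carries the closed Bott-Morse foliation $\tilde\fa=\fa|_{\tilde M}$ with $c(\tilde\fa)>2s(\tilde\fa)\ge 2$ and $s(\tilde\fa)=s(\fa)-1$, and no center other than $N_1,N_2$ meets $\Omega$. By the explicit descriptions in \S\ref{subsection:irreduciblepairingsdim3}, an irreducible component carrying a saddle paired with two centers is of bitorus- or torus-ball type, hence is contained in a solid-torus region of $M$ on which $\fa$ becomes, after a surgery inside $\Omega$, a compact foliation by concentric tori or spheres; in particular the frontier of $\Omega$ is a single compact leaf, a torus or a sphere, and the genus-$\ge 2$ handlebody components cannot occur, since a genus-$g$ one would force $c(\fa)\le 2s(\fa)$. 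Recursing on $\tilde M$ makes $\tilde\fa$ compact, and re-gluing $\Omega$ along its boundary leaf then presents $\fa$, up to foliated surgery, as a compact Bott-Morse foliation on $M$, still with the holonomy pseudogroup preserved off $\sing(\fa)$. This completes (i).

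For (ii) the foliation has by (i) been deformed to a compact Bott-Morse foliation, so Theorem~\ref{Theorem D} applies and shows that $M$ is $S^3$, a Lens space, or $S^1\times S^2$, these being exactly the closed oriented $3$-manifolds admitting a genus-$\le 1$ Heegard splitting. I expect the delicate point throughout to be the $(\star)$ bookkeeping: making precise that the genus-$\ge 2$ irreducible components never arise under $c(\fa)>2s(\fa)$, that a saddle paired with two centers always falls into one of the listed cases, and that peeling off and re-attaching $\Omega$ preserves properness, the inequality, and the holonomy pseudogroup.
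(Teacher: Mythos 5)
Your base case, the non-injectivity argument producing a saddle $N_0$ paired with two centers $N_1,N_2$, the treatment of the reducible cases via Theorem~\ref{Theorem:onlywhatweneed}, and the bookkeeping showing that $c>2s$ (resp.\ $c>s$ with pure dimension) survives each surgery all match the paper's proof, which is organized around Proposition~\ref{Proposition:eliminationBottMorse}. The gap is in your handling of the irreducible case $(\star)$, where your argument both diverges from the paper and breaks down. First, the structural claims are false: for the disconnected component of bitorus type the outer leaves are genus-two surfaces, $\Omega$ is not a solid torus, its frontier is not a single compact leaf, and by Remark~\ref{Remark:irreduciblecase} $\partial\Omega$ contains \emph{another saddle} $N_0'$ rather than a torus or sphere leaf. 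Second, no surgery inside $\Omega$ can turn $\fa\big|_\Omega$ into a compact foliation by concentric tori or spheres --- that is precisely what irreducibility forbids: by Theorem~\ref{Theorem C} the leaves of a compact Bott-Morse foliation are sphere bundles, and a genus-two leaf is not one. Consequently your final step fails: after recursing on $\tilde M$ and ``re-gluing $\Omega$'', the saddle $N_0$ is still sitting inside $\Omega$, so the resulting foliation on $M$ is not compact. (There is also the secondary issue that $\tilde M=M\setminus\Omega$ is a manifold with boundary, so neither the inductive hypothesis nor Theorem~\ref{Theorem D} applies to it as stated.)

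The paper's resolution of $(\star)$ is different and you will need it: the restriction $\tilde\fa$ on $\tilde M$ is used only as a device to \emph{locate another} triple $(\tilde N_0,\tilde N_1,\tilde N_2)$ with $\tilde N_0\subset\partial\C(\tilde N_1,\fa)\cap\partial\C(\tilde N_2,\fa)$ disjoint from $\Omega$; this exists because $(\star)$ guarantees $c(\tilde\fa)>2s(\tilde\fa)\ge 2$ and no center other than $N_1,N_2$ has its basin meeting $\Omega$. If this new triple is reducible, the surgery is performed there, \emph{on $M$ itself}, leaving $\Omega$ untouched but still lowering $s(\fa)$ by one while preserving $c>2s$; if it is again irreducible, one iterates, and the iteration must terminate since each irreducible component consumes two centers and one saddle, so if every singular component lay in an irreducible component one would have $c(\fa)=2s(\fa)$, contradicting the hypothesis. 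With this replacement for your $(\star)$ paragraph, the induction on $s(\fa)$ closes and part (ii) follows from Theorem~\ref{Theorem D} exactly as you say.
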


 Part (ii) of
Theorem~\ref{Theorem:Center-Saddledimensionthree} is an immediate
consequence of part (i) and Theorem D in \cite{Se-Sc}. Examples in
Section~\ref{Section:examples} show that the hypothesis that
either $c(\fa) > 2 s(\fa)$ or $c(\fa) > s(\fa)$ and the singular
set has pure dimension,  is necessary. If $\sing(\fa)$ has
dimension $0$ then (i)  is proved in \cite{Camacho-Scardua}; in
this case $M$ is actually the 3-sphere. \vglue.1in To prove this
theorem we need the following elimination result:

\begin{Proposition}
\label{Proposition:eliminationBottMorse} Let $\fa$ be a closed
Bott-Morse foliation on a connected closed $3$-manifold $M$.
Suppose that $c(\fa) > 2 s(\fa)$. Then either $s(\fa)=0$ or $\fa$
admits a modification  into a closed Bott-Morse foliation $\fa_1$
on $M$ such that either $c(\fa_1)=c(\fa)-1$ and $s(\fa_1) = s(\fa)
-1$ or $c(\fa_1)=c(\fa)-2$ and $s(\fa_1) = s(\fa)-1$.
\end{Proposition}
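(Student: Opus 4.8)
The plan is to use the hypothesis $c(\fa) > 2s(\fa) \ge 2$ to locate, via the basins of centers, a saddle component $N_0$ that is paired with one or two centers in a controlled way, and then to apply the appropriate case of Theorem~\ref{Theorem:onlywhatweneed} to perform a foliated surgery that removes $N_0$ together with the center(s) it is paired with. First I would assume $s(\fa) \ge 1$; then $c(\fa) \ge 3$. Pick a center component $N_1$. By Theorem~\ref{Lemma:basinopen}, either $\partial\C(N_1,\fa) = \emptyset$ (impossible, since that would force $s(\fa)=0$), or $\partial\C(N_1,\fa)$ contains exactly one saddle component $N_0$, which is paired with $N_1$ by Definition~\ref{Definition:pairing}. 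So every center is paired with exactly one saddle; since there are more than twice as many centers as saddles, some saddle $N_0$ is paired with at least two distinct centers, say $N_1$ and $N_2$, i.e.\ $N_0 \subset \partial\C(N_1,\fa) \cap \partial\C(N_2,\fa)$. We are now exactly in the setting of Theorem~\ref{Theorem:onlywhatweneed}.

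Next I would go through the cases of Theorem~\ref{Theorem:onlywhatweneed} and check that each reduction produces the claimed change in $(c,s)$ while preserving the Bott-Morse and closedness properties (this last is built into the notion of foliated surgery, Definition~\ref{Definition:surgery}, together with the fact that the surgeries in Theorem~\ref{Theorem:onlywhatweneed} replace $\fa$ on an invariant region by a Bott-Morse foliation agreeing with $\fa$ near the boundary). In case (1.i) one of the pairings $N_0 \leftrightarrow N_j$ is trivial, so we eliminate an isolated center and $N_0$: $c \mapsto c-1$, $s \mapsto s-1$. In case (1.ii) either the pairing $N_0 \leftrightarrow N_1$ is trivial (again $c-1$, $s-1$) or $N_0 \cup N_1$ is replaced by a single one-dimensional center ($c-1$, $s-1$). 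In case (2.i) the triple pairing $N_0 \leftrightarrow (N_1,N_2)$ is trivial, so we eliminate $N_0, N_1, N_2$: $c \mapsto c-2$, $s \mapsto s-1$. In case (2.ii) we replace $N_0, N_1, N_2$ by a single non-isolated center: $c \mapsto c-2$, $s \mapsto s-1$. In case (2.iii) we replace $N_0 \cup N_1 \cup N_2$ by an isolated center: again $c \mapsto c-2$, $s \mapsto s-1$. In every case $(c,s)$ changes as stated in the Proposition.

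The remaining case is $(\star)$ of Theorem~\ref{Theorem:onlywhatweneed}: $N_0, N_1, N_2$ lie in a common disconnected irreducible component $\Omega$. Here the surgery does \emph{not} literally remove singularities inside $\Omega$; instead the content of $(\star)$ is that the restriction $\tilde\fa = \fa|_{M\setminus\Omega}$ is again a closed Bott-Morse foliation with $c(\tilde\fa) > 2s(\tilde\fa) \ge 2$, and no center outside $\{N_1,N_2\}$ has its basin meeting $\Omega$. So I would argue as follows: cut out $\Omega$, pass to $\tilde M = M \setminus \Omega$ with $\tilde\fa$, and iterate the whole argument there. Since the number of components of $\sing(\fa)$ strictly drops when we pass from $M$ to $\tilde M$ (we lose $N_1, N_2, N_0$ and possibly more), this recursion terminates; eventually we reach a stage where the saddle chosen at the start of the argument is paired with two centers \emph{not} inside a common irreducible component, and then one of the non-$(\star)$ cases applies, yielding a genuine modification of the foliation on all of $M$. (One must check that the surgery done on $\tilde M$, which agrees with $\tilde\fa$ near $\partial\tilde M$ and hence extends by $\fa|_\Omega$ to all of $M$, changes $(c,s)$ of the \emph{whole} foliation on $M$ exactly as claimed — this is immediate since the components inside $\Omega$ are untouched.)

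The main obstacle I anticipate is the bookkeeping in the $(\star)$ case: one has to make sure that the region $\Omega$ and the surgeries performed outside it are compatible — that performing a surgery on $\tilde M = M\setminus\Omega$ and then gluing back $\fa|_\Omega$ indeed gives a \emph{globally} smooth closed Bott-Morse foliation on $M$ with no new saddle connections created along the (collared) gluing locus $\partial\Omega$ — and that the inequality $c(\tilde\fa) > 2 s(\tilde\fa)$ together with $s(\tilde\fa) \ge 1$, which is exactly what $(\star)$ guarantees, is enough to keep the induction going. Everything else is a direct appeal to Theorem~\ref{Theorem:onlywhatweneed} and a count of singular components.
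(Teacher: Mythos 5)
Your proposal is correct and follows essentially the same route as the paper: the pigeonhole argument on the center-to-saddle pairing map forced by $c(\fa)>2s(\fa)$, the case analysis via Theorem~\ref{Theorem:onlywhatweneed}, and the treatment of the $(\star)$ case by restricting to $M\setminus\Omega$ and iterating. The only cosmetic difference is in the termination argument for the $(\star)$ recursion: you invoke the strict decrease in the number of singular components, while the paper notes that if every singularity lay in an irreducible component one would have $c(\fa)=2s(\fa)$, contradicting the hypothesis; both arguments work.
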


\begin{proof}
 We fix a component $N\in
\Cent(\fa)$ and consider $\mathcal C(N,\fa)$ as usual. If $\po
\mathcal C(N,\fa)=\emptyset$ then we apply
Theorem~\ref{Lemma:basinopen} (ii) to conclude that
$\Sad(\fa)=\emptyset$ and therefore $s(\fa)=0$. Assume therefore
that $\po \mathcal C(N,\fa)\ne \emptyset$. Then by
Theorem~\ref{Lemma:basinopen} (iii) we have $\po\mathcal
C(N,\fa)\subset \Sad(\fa)$ and there is some component $S(N)\in
\Sad(\fa) \cap \partial \mathcal C(N,\fa)$. In this way we can define a
map $\Theta$   from the set of center-type
components $\Cent(\fa)$ into the set of saddle-type
 components $\Sad(\fa)$, given by $  N \mapsto S(N)$. This map cannot be injective
since we have the inequality $c(\fa) > 2 s(\fa)$. Therefore there
are at least two different center components $N_1, N_2 $ and
a saddle component $N_0$ such that $N_0 \subset
\partial \mathcal C(N_1,\fa) \cap \partial \mathcal C(N_2,\fa)$.

If $N_0$ is non-isolated the conclusion  follows from
Theorem~\ref{Theorem:onlywhatweneed} part (2).  Assume now that
$N_0$ is an isolated saddle point. According to
Theorem~\ref{Theorem:onlywhatweneed} part (1)  the reduction
exists if $N_0, N_1, N_2$  are not in a same irreducible
component of $\fa$. Suppose therefore that $N_0, N_1, N_2$ are in
a same irreducible component $\Omega \subset M$. Then
Theorem~\ref{Theorem:onlywhatweneed} ($\star$) says that there is an invariant
compact region $\Omega\subset M$ containing these singularities
such that:
\begin{itemize}
\item The restriction $\tilde \fa$ of $\fa$ to $\tilde M=M \setminus
\Omega$ is a closed Bott-Morse foliation which satisfies $c(\tilde \fa) > 2
s(\tilde\fa) \geq 2$; and
\item  $\C(N,\fa)\cap \Omega=
\emptyset$ for every other center $N\subset \sing(\fa)$ one has $N\ne N_j$, $
j=1,2$.
\end{itemize}
Thus, arguing as above we can find two centers $\tilde
N_1, \tilde N_2$ and a saddle $\tilde N_0$ such that $\partial
\C(N_1, \tilde \fa) \cap \tilde \C(N_2,\tilde \fa) = \tilde N_0$.
If $\tilde N_0$ is not isolated or $\tilde N_0, \tilde N_1, \tilde
N_2$ do not belong to the same irreducible component of $\tilde
\fa$ on $\tilde M$, then  we can obtain a closed Bott-Morse
foliation  $\tilde{\tilde \fa}$ which is a reduction of $\tilde
\fa$ on $\tilde M$ and  satisfies $c(\tilde{\tilde \fa})> 2
s(\tilde{\tilde \fa})$. This gives a reduction of $\fa$ on $M$
with the desired properties.
The remaining case is when
$\tilde N_0, \tilde N_1, \tilde N_2$ belong to a same irreducible
component $\tilde \Omega$ of $\tilde \fa$ on $\tilde M$. Notice that for
any irreducible component we have two centers and one saddle.

This process can be repeated until we can assure that  for
the resulting foliation, the two centers and the saddle are not in
a same irreducible component, for otherwise, if
all singularities are in irreducible components, then we would have
$c(\fa)= 2s(\fa)$,  which is a contradiction.
 \end{proof}

\begin{proof} [Proof of
Theorem~\ref{Theorem:Center-Saddledimensionthree}]   Assume that  $c(\fa) > 2 s(\fa)$. We proceed by
induction on the number $s(\fa)$ of saddle components in
$\sing(\fa)$. We first assume that $s(\fa)=0$. Then $\fa$ is a
compact Bott-Morse foliation with nonempty singular set and
Theorem~\ref{Theorem:Center-Saddledimensionthree} follows from
Theorem \ref{Theorem D}. Now we assume that the result is valid
for closed Bott-Morse foliations $\fa_1$ on closed $3$-manifolds
satisfying $c(\fa_1) > 2\, s(\fa_1)$ and having a number of saddle
components not greater than $k\ge 1$, {\it i.e.}, $s(\fa_1) \leq
k$. Let $\fa$ be a closed Bott-Morse foliation on $M$ such that
$c(\fa) > 2 s(\fa)$ and such that $s(\fa)=k+1$. Since $c(\fa) > 2
s(\fa)$ and $s(\fa) \geq 1$ it follows from
Proposition~\ref{Proposition:eliminationBottMorse} that $M$ admits
a closed Bott-Morse foliation $\fa_1$ for which we have $c(\fa_1)>
2\, s(\fa_1)$ and $s(\fa_1) < s(\fa)$. This proves the theorem.
Now we assume that $c(\fa) >  s(\fa)$ and the singular set of $\fa$
is pure-dimensional. The result is proved in \cite{Camacho-Scardua} in case all components
of the singular set have dimension zero. Thus we only have to
prove the case where  the components of the singular set have
dimension one. For this it is enough to observe that if we have a
pairing $N_0 \leftrightarrow (N_1,N_2)$ where all the components
are of dimension one then we can always eliminate some pair of
components $N_0 \leftrightarrow N_j$ and remain with a center
component. Therefore, in case $\sing(\fa)$ is a union of dimension
one components, it is  enough to have $c(\fa) > s(\fa)$ to get the
same conclusions as above.

\end{proof}

\bibliographystyle{amsalpha}

\vglue.2in
\begin{tabular}{ll}
Bruno Sc\'ardua  & \qquad  Jos\'e Seade\\
Instituto de  Matem\'atica & \qquad Instituto de Matem\'aticas,   Unidad Cuernavaca  \\
Universidade Federal do Rio de Janeiro  & \qquad Universidad Nacional Aut\'onoma de M\'exico,\\
Caixa Postal 68530 & \qquad Av. Universidad s/n, Col. Lomas de Chamilpa\\
21.945-970, Rio de Janeiro-RJ   & \qquad   C.P. 62210, Cuernavaca, Morelos\\
BRAZIL &  \qquad M\'EXICO  \\
 scardua@impa.br & \qquad jseade@matcuer.unam.mx
\end{tabular}

\end{document}